\documentclass[11pt,reqno]{amsart}
\usepackage[colorlinks=true,linkcolor=red,citecolor=blue]{hyperref}
\usepackage{graphics}
\usepackage{tikz}
\usepackage{amssymb}
\usepackage{comment}
\usetikzlibrary{patterns}
\usepackage{enumerate}
\newcommand{\di}{\displaystyle}
\newcommand{\tron}[1]{\left( #1 \right)}
\usepackage{color}
\usepackage[left=3.00cm, right=3.00cm, top=2.00cm, bottom=2.00cm]{geometry}
\numberwithin{equation}{section}
\newtheorem{theorem}{Theorem}[section]
\newtheorem{remark}{Remark}[section]
\newtheorem{definition}{Definition}[section]
\newtheorem{lemma}[theorem]{Lemma}

\newtheorem{proposition}[theorem]{Proposition}

\newcommand{\flo}[1]{\lfloor #1 \rfloor }
\allowdisplaybreaks

\newcommand{\R}{\mathbb{R}}
\newcommand{\Z}{\mathbb{Z}}

\begin{document} 
\title[A system of semilinear damped $\sigma$-evolution equations]{On a system of semilinear damped $\sigma$-evolution equations with different damping types in the critical case}

\subjclass{26A15, 35A01, 35B33, 35L52}
\keywords{Structural damping; Weakly coupled system; Modulus of continuity; Global existence; Blow-up}
\thanks{$^* $\textit{Corresponding author:} Tuan Anh Dao}

\maketitle
\centerline{\scshape \textbf{Trung Loc Tang}}
{\footnotesize
	\centerline{School of Mathematics and Computer Science, Hanoi National University of Education}
	\centerline{136 Xuan Thuy, Hanoi, Vietnam}
	\centerline{Email: ttloc.toank28cltt@gmail.com}}
\medskip

\centerline{\scshape \textbf{Tuan Anh Dao}}
{\footnotesize
	\centerline{Faculty of Mathematics and Informatics, Hanoi University of Science and Technology}
	\centerline{No.1 Dai Co Viet road, Hanoi, Vietnam}
	\centerline{Email: anh.daotuan@hust.edu.vn}}
\medskip

\centerline{\scshape \textbf{The Anh Cung}}
{\footnotesize
	\centerline{School of Mathematics and Computer Science, Hanoi National University of Education}
	\centerline{136 Xuan Thuy, Hanoi, Vietnam}
	\centerline{Email: anhctmat@hnue.edu.vn}}

\begin{abstract}
In this paper, we study the non-symmetric system of semilinear damped $\sigma$-evolution equations with different damping types, where two power exponents of nonlinearities belong to the critical curve, by using moduli of continuity in nonlinear terms. Our goal is to determine the sharp conditions on these moduli of continuity that guarantee the global (in time) existence of Sobolev solutions or, conversely, lead to finite-time blow-up. Furthermore, by employing the analysis introduced in the proof of our blow-up result, we provide a positive answer to an open problem for the symmetric models posed in the literature.
\end{abstract}

\tableofcontents

\section{Introduction}
To get started, let us make a brief overview about the research perspectives of the following Cauchy problem for structurally damped $\sigma$-evolution equations:
\begin{equation}\label{sys1}
\left\{
\begin{aligned}
&u_{tt}+(-\Delta)^{\sigma}u+(-\Delta)^{\delta}u_t=|u|^p, &&x\in \mathbb{R}^n,\, t\geq 0,\\
&(u,u_t)(0,x)= (u_0,u_1)(x), &&x\in \mathbb{R}^n,
\end{aligned}
\right.
\end{equation}
where $\sigma\ge 1$ and $\delta\in [0,\sigma]$, which have attracted significant attention from the mathematical community (see, for example, \cite{Mat1976,Duong2015,Dao2019,ADL2024} and references therein). The choice of $\delta$ in the damping term plays a crucial role in determining the asymptotic behavior of solutions to the corresponding linear equation of \eqref{sys1}. More precisely, in the case where $\delta \in [0, \sigma/2)$, the linear equation exhibits the ``parabolic-like" behavior, as its solutions share the same asymptotic profile as those of the so-called anomalous diffusion equation (see \cite{D'Abbicco2017}), which is exactly the heat equation when $\sigma=1$. Furthermore, the critical exponent for the semi-linear problem \eqref{sys1} under assuming additional $L^1$ regularity for the initial data is determined by $p_{\mathrm{par}}(n,\sigma,\delta) := 1 +2\sigma/(n-2\delta)$. On the other hand, when $\delta \in (\sigma/2,\sigma]$, the diffusion phenomenon no longer occurs to the linear equation, and the behavior of solutions coincides with that of the free $\sigma$-evolution equations, which is exactly the classical wave equation when $\sigma=1$. This regime is often referred to the ``$\sigma$-evolution-like" model. In this case, the critical exponent to \eqref{sys1}, under assuming additional $L^1$ regularity for the initial data, is given by $p_{\mathrm{evo}}(n,\sigma) := 1 + 2\sigma/(n-\sigma)$. Here, the critical exponent $p_{\rm crit}$ means that there exist global (in time) small data Sobolev solutions for some range of admissible exponent $p>p_{\rm crit}$, whereas one may find non-existence of global (in time) Sobolev solutions even for small data in the case $1< p < p_{\rm crit}$. In other words, we have only local (in time) Sobolev solutions in the latter case. From these observations, the value $\delta = \sigma/2$ acts as the threshold to distinguish the two distinct models appearing in terms of the study of \eqref{sys1} and its corresponding linear equation. A natural question arising is whether one can find the so-called critical curve for a weakly coupled system in the non-symmetric setting, i.e. a system consists of both classes of equations mentioned above. This problem, which focuses heavily on the interaction between a ``parabolic-like" equation and a ``$\sigma$-evolution-like" equation, has been recently addressed in \cite{DuongDaoRei2025} for the following system:
\begin{equation}\label{DuongDaoReisystem}
\left\{
\begin{aligned}
    &u_{tt}+(-\Delta)^{\sigma} u+(-\Delta)^{\sigma}u_t &&=|v|^{p}, &&x\in \mathbb{R}^n,\,t\geq 0,\\
    &v_{tt}+(-\Delta)^{\sigma}v+v_t &&=|u|^{q}, &&x\in \mathbb{R}^n,\,t\geq 0,\\
    &(u,u_t,v,v_t)(0,x) &&= (0,u_1,0,v_1)(x), &&x\in \mathbb{R}^n.
\end{aligned}
\right.
\end{equation}
The authors proved that under the conditions $1 < \sigma < n < 2\sigma$ and $p \geq 2$, if the inequality
$$ \max\left\{\frac{2q+1}{pq+q-2}, \frac{pq+p+1}{2pq-p-1}\right\} > \frac{n}{2\sigma} $$
holds, there exists a unique global (in time) Sobolev solution to \eqref{DuongDaoReisystem}. Additionally, they established a blow-up result if the relation
$$ \max\left\{\frac{2q+1}{pq+q-2}, \frac{pq+p+1}{2pq-p-1}\right\} < \frac{n}{2\sigma} $$
happens. Consequently, one realizes that the critical curve for \eqref{DuongDaoReisystem} is defined by
\begin{equation}\label{critcurve}\tag{$*$}
\max\left\{\frac{2q+1}{pq+q-2}, \frac{pq+p+1}{2pq-p-1}\right\}-\frac{n}{2\sigma}=0.
\end{equation}

Among other perspectives, the very interesting topic is to study a single equation or a weakly coupled system with power nonlinearities whose exponents take on the critical value or belong to the critical curve, respectively. One recognizes that this perspective has recently attracted a growing interest from numerous mathematicians over the world. The main advantage of considering such problems is to determine the critical nonlinearity or the critical regularity of nonlinearities by adding modulus of continuity in nonlinear terms. For this approach, we assume a function $\mu:[0,\infty)\to [0,\infty)$ to be continuous, concave, and increasing which provides an additional regularity of nonlinearities in comparison with the power nonlinear terms. We refer the reader to a few papers (see more \cite{EbeGirRei2020,AnhRei2021,ChenGirardi2025,TangDaoCung2026,TangDuong2026}) for recent results in this direction. One of the most remarkable contributions comes from Dao-Reissig in \cite{AnhRei2021}, where the authors have investigated the weakly coupled system of semilinear damped wave equations in the critical case as follows:
\begin{equation}\label{sys4}
\left\{
\begin{aligned}
&u_{tt}-\Delta u+u_t=|v|^{p^*}\mu_{1}(|v|), &&x\in \mathbb{R}^n,\, t\geq 0,\\
&v_{tt}-\Delta v+v_t=|u|^{q^*}\mu_{2}(|u|), &&x\in \mathbb{R}^n,\, t\geq 0,\\
&(u,u_t,v,v_t)(0,x)= (u_0,u_1,v_0,v_1)(x), &&x\in \mathbb{R}^n.
\end{aligned}
\right.
\end{equation}
Here $(p^*, q^*)$ belong to the critical curve described by 
\[
\frac{\max\{p,q\}+1}{pq-1}-\frac{n}{2}=0
\]
and $\mu_1=\mu_1(s),\, \mu_2=\mu_2(s)$ stand for moduli of continuity. The authors proved that \eqref{sys4} admits a global (in time) Sobolev solution if $I_{\mu_1,\mu_2} < \infty$, otherwise, the solution blows up in finite time when $I_{\mu_1,\mu_2} = \infty$, where the integral quantiy is defined by
\[
I_{\mu_1,\mu_2} :=
\begin{cases}
    \vspace{0.2cm}\displaystyle\int_{0}^{c} \frac{1}{s} (\mu_1(s))^{\frac{1}{1+p^*}} (\mu_2(s))^{\frac{p^*}{1+p^*}} \, ds &\text{ if } p^*\ge q^*, \\
    \displaystyle\int_{0}^{c} \frac{1}{s} (\mu_1(s))^{\frac{q^*}{1+q^*}} (\mu_2(s))^{\frac{1}{1+q^*}} \, ds &\text{ if } q^*\ge p^*, \\
\end{cases}
\]
for a suitably small constant $c>0$. We can say that this paper gives a new interaction of additional regularities of power nonlinearities in comparison with other research results in terms of the study of weakly coupled systems. As strongly motivated from \cite{DuongDaoRei2025,AnhRei2021}, in the present work we would like to investigate the following weakly coupled system:
\begin{equation}\label{mainsystem}
\begin{cases}
\begin{aligned}
    &u_{tt}+(-\Delta)^{\sigma} u+(-\Delta)^{\sigma}u_t &&=|v|^{p^*}\mu_1(|v|), &&x\in \mathbb{R}^n,\,t\geq 0,\\
    &v_{tt}+(-\Delta)^{\sigma} v+v_t &&=|u|^{q^*}\mu_2(|u|), &&x\in \mathbb{R}^n,\,t\geq 0,\\
    &(u,u_t,v,v_t)(0,x) &&= (0,u_1,0,v_1)(x), &&x\in \mathbb{R}^n,
\end{aligned}
\end{cases}
\end{equation}
where $(p^*,q^*)$ lies on the critical curve \eqref{critcurve}. {\textit{We aim to determine the sharp threshold for $\mu_1$ and $\mu_2$ that distinguishes between the global (in time) existence and the blow-up of Sobolev solutions to \eqref{mainsystem}}}. Obviously, we can see that several key features distinguish our considered system from those previously studied in the literature. First, the system \eqref{mainsystem} is non-symmetric, so that the roles of $u$ and $v$, as well as the power exponents $p^*$ and $q^*$, are not equivalent. This means that we cannot interchange their roles in our analysis (see later, \eqref{con3athr1.1}-\eqref{con3bthr1.1} in Theorem \ref{thr1.1} and \eqref{con3thr1.2}-\eqref{con4thr1.2} in Theorem \ref{thr1.2}) as appeared in previous works. Second, since the fractional Laplacian $(-\Delta)^{\sigma}$ becomes nonlocal, when $\sigma$ is fractional, standard compactly supported test functions are no longer applicable to establish the blow-up results. To overcome these difficulties, on the one hand the main strategy for the proof of global (in time) Sobolev solutions is based on using different weight functions of loss of decay in the solution space. On the other hand, we introduce a judiciously modified test function with non-compact support, which allows us to effectively estimate nonlocal operators in the proof of blow-up solutions. Finally, we also demonstrate that the test fucntion method developed in this paper can be successfully applied to provide a blow-up result to the symmetric models in Section \ref{subsec4.3}. \medskip

\textbf{Notations:} We write $f\lesssim g$ if there exists a constant $C>0$ such that $f\leq Cg$, and $f \sim g$ if $g\lesssim f\lesssim g$. As usual, the spaces $H^a$ and $\dot{H}^a$ with $a \geq 0$ stand for Bessel and Riesz potential spaces based on $L^2$ space. For a given number $s \in \R$, we denote $\flo{s}:= \max \big\{k \in \Z \,\, : \,\, k\leq s \big\}$ as its integer part and $[s]^{+}:=\max\{s,0\}$. We put $|x|:= \sqrt{x_1^2+x_2^2+\cdots+ x_n^2}$, the norm of $x \in \R^n$. Finally, we introduce the initial data space $(u_1,v_1)\in \mathcal{D}:=\tron{L^1\cap L^2}^2$ with the corresponding norm
\begin{align*}          
\|(u_1,v_1)\|_{\mathcal{D}}:=\|u_1\|_{L^2}+\|u_1\|_{L^1}+\|v_1\|_{L^2}+\|v_1\|_{L^1}.
\end{align*}

\textbf{Main results:} Let us introduce the the following two quantities:
$$ p_{\rm c}:= p_{\mathrm{par}}(n,\sigma,0)= 1 + 2\sigma/n \text{ and }q_{\rm c}:= p_{\mathrm{evo}}(n,\sigma)= 1 + 2\sigma/(n-\sigma). $$
Since $(p^*, q^*)$ satisfies \eqref{critcurve}, it follows that
\[
p^* \leq p_{\rm c} \text{ and } q^* \geq q_{\rm c}, \quad \text{or} \quad p^* \geq p_{\rm c} \text{ and } q^* \leq q_{\rm c}.
\]
The first result concerns with the global (in time) existence of Sobolev solutions, for which the assumption $p^*\geq 2$ is required (see \cite{DuongDaoRei2025}).
\begin{theorem}\label{thr1.1}
    Let $1<\sigma<n<2\sigma$ and the moduli of continuity satisfy the following assumption:
     \begin{equation}\label{con1thr1.1}
        s\mu'_{j}(s)=\mathcal{O}(\mu_j(s))\quad \text{as $s\to +0$ with $j=1,2$.}
    \end{equation}
    Moreover, we suppose that one of the following conditions is satisfied:
    \begin{itemize}
        \item[\rm i)] 
        \begin{equation}\label{con2thr1.1}
            \di\int_{0}^c\frac{\mu_1(s)}{s}\;ds<\infty \text{ and }\di\int_{0}^c\frac{\mu_2(s)}{s}\;ds<\infty,
        \end{equation}
        \item[\rm ii)] 
            \begin{equation}
            \di\int_{0}^c\frac{1}{s}\tron{\mu_1(s)}^{\frac{q^*}{q^*+1}}\tron{\mu_2(s)}^{\frac{1}{q^*+1}}\;ds<\infty \text{ if }\eqref{con2thr1.1} \text{ fails and }q^*\geq q_{\rm c},\,2\leq p^*\leq p_{\rm c}, \label{con3athr1.1}
            \end{equation}
        \item[\rm iii)] 
            \begin{equation}
            \di\int_{0}^c\frac{1}{s}\tron{\mu_1(s)}^{\frac{1}{p^*+1}}\tron{\mu_2(s)}^{\frac{p^*}{p^*+1}}\;ds<\infty\text{ if }\eqref{con2thr1.1} \text{ fails and }q^*< q_{\rm c},\, p^*>p_{\rm c}.\label{con3bthr1.1}
            \end{equation}    
    \end{itemize}
    Especially, when $p^*=p_{\rm c},q^*=q_{\rm c}$, a further assumption required is that $s\in (0,c]\to \mu_1(s)/\mu_2(s)$ is a decreasing function, where $c>0$ is a suitable small constant. Then, there exists a constant $\varepsilon$ such that for any small data $\left(u_1, v_1\right) \in \mathcal{D}$ satisfying the assumption $\|(u_1,v_1)\|_{\mathcal{D}}\leq\varepsilon$, we have a uniquely determined global (in time) Sobolev solution
$$
(u, v) \in \mathcal{C}\big([0, \infty),\dot{H}^{\sigma}\cap L^{q^*}\cap L^{\infty}\big)\cap \mathcal{C}\left([0,\infty),H^{\sigma}\right)
$$
to \eqref{mainsystem}. Moreover, the following estimates hold for $t>0$:
$$
\begin{aligned}
\|u(t,\cdot)\|_{L^{q^*}} &\lesssim (1+t)^{1-\frac{n}{\sigma}\tron{1-\frac{1}{q^*}}+[\varepsilon_1(p^*)]^{+}}\ell_1(t)\|(u_1,v_1)\|_{\mathcal{D}},\\
\|u(t,\cdot)\|_{L^{\infty}} &\lesssim (1+t)^{1-\frac{n}{\sigma}+[\varepsilon_1(p^*)]^{+}}\ell_1(t)\|(u_1,v_1)\|_{\mathcal{D}},\\
\|u(t, \cdot)\|_{\dot{H}^{\sigma}} &\lesssim (1+t)^{-\frac{n}{4\sigma}+[\varepsilon_1(p^*)]^{+}}\ell_1(t)\|(u_1,v_1)\|_{\mathcal{D}},\\
\|v(t, \cdot)\|_{L^2} &\lesssim (1+t)^{-\frac{n}{4\sigma}+[\varepsilon_2(q^*)]^{+}}\ell_2(t)\|(u_1,v_1)\|_{\mathcal{D}},\\
\|v(t,\cdot)\|_{L^{\infty}} &\lesssim (1+t)^{-\frac{n}{2\sigma}+[\varepsilon_2(q^*)]^{+}}\ell_2(t)\|(u_1,v_1)\|_{\mathcal{D}},\\
\|v(t,\cdot)\|_{\dot{H}^{\sigma}} &\lesssim (1+t)^{-\frac{n}{4\sigma}-\frac{1}{2}+[\varepsilon_2(q^*)]^{+}}\ell_2(t)\|(u_1,v_1)\|_{\mathcal{D}},
\end{aligned}
$$
where
\begin{equation}\label{con4thr1.1}
\begin{cases}
    \varepsilon_1(p^*):=1-\frac{n}{2\sigma}(p^*-1),\\
    \varepsilon_2(q^*):=1+q^*-\frac{n}{\sigma}(q^*-1),
\end{cases}
\end{equation}
the weight functions $\ell_1(t)$ and $ \ell_2(t)$ are defined by
\begin{equation}\label{con5thr1.1}
\ell_1(t):= 
\begin{cases}
1 &\text { if \eqref{con2thr1.1} or \eqref{con3bthr1.1} holds}, \\
\left(\dfrac{\mu_1\left(c(1+t)^{-\gamma}\right)}{\mu_2\left(c(1+t)^{-\gamma}\right)}\right)^{\frac{1}{q^*+1}}  &\text { if \eqref{con3athr1.1} holds},
\end{cases}
\end{equation}
and
\begin{equation}\label{con6thr1.1}
\ell_2(t):=
\begin{cases}
1 &\text { if \eqref{con2thr1.1} or \eqref{con3athr1.1} holds}, \\
\left(\dfrac{\mu_2\left(c(1+t)^{-\gamma}\right)}{\mu_1\left(c(1+t)^{-\gamma}\right)}\right)^{\frac{1}{p^*+1}}  &\text { if \eqref{con3bthr1.1} holds},
\end{cases}
\end{equation}
respectively, with a sufficiently small constant $\gamma>0$. 
\end{theorem}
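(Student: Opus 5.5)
We will argue by the Banach fixed point theorem in a weighted solution space. Write $K_1(t)$ and $K_2(t)$ for the fundamental solutions of the linear problems $u_{tt}+(-\Delta)^\sigma u+(-\Delta)^\sigma u_t=0$ and $v_{tt}+(-\Delta)^\sigma v+v_t=0$ with data $(0,\cdot)$. We then define the operator
\[
N(u,v):=\Big(K_1(t)\ast u_1+\int_0^t K_1(t-\tau)\ast |v|^{p^*}\mu_1(|v|)\,d\tau,\;K_2(t)\ast v_1+\int_0^t K_2(t-\tau)\ast |u|^{q^*}\mu_2(|u|)\,d\tau\Big).
\]
For $T>0$ the space $X(T)$ consists of pairs $(u,v)$ with the regularity stated in Theorem \ref{thr1.1}. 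Its norm is the supremum over $[0,T]$ of the six quantities in the statement, each multiplied by the reciprocal of the claimed rate $(1+t)^{\cdots}\ell_j(t)$. We first recall the linear $(L^1\cap L^2)\to L^r$, $\dot H^\sigma$ estimates from \cite{DuongDaoRei2025}. In the $\sigma$-evolution-like case the $L^{q^*}$ and $L^\infty$ norms of $K_1\ast u_1$ grow like $(1+t)^{1-\frac n\sigma(1-\frac1r)}$, which uses $n<2\sigma$. In the parabolic-like case we have the decay $(1+t)^{-\frac n{2\sigma}(1-\frac1r)}$ and $(1+t)^{-\frac n{4\sigma}-\frac12}$ for $\dot H^\sigma$. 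These give $\|N(0,0)\|_{X}\lesssim\|(u_1,v_1)\|_{\mathcal D}$. The aim is then to prove $\|N(u,v)\|_X\lesssim \|(u_1,v_1)\|_{\mathcal D}+\|(u,v)\|_X^{p^*}+\|(u,v)\|_X^{q^*}$ together with the analogous Lipschitz estimate. Smallness then yields a unique global fixed point, and the a priori bounds are exactly the estimates in the statement.

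For the Duhamel terms we split $[0,t]=[0,t/2]\cup[t/2,t]$. On $[0,t/2]$ we use $L^1\cap L^2$ estimates for the kernel, and on $[t/2,t]$ only $L^2$ estimates. The nonlinear norms are bounded pointwise: by monotonicity, $\mu_1(|v|)\le\mu_1(\|v\|_{L^\infty})$. Hence
\[
\||v|^{p^*}\mu_1(|v|)\|_{L^m}\le \|v\|_{L^\infty}^{p^*-2/m}\|v\|_{L^2}^{2/m}\,\mu_1\big(\|v\|_{L^\infty}\big),\qquad m=1,2.
\]
The same bound holds with $u$, $q^*$, $\mu_2$, using $L^{q^*}$ in place of $L^2$, which is why $L^{q^*}$ appears in the space. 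Inserting the $X$-norm, $\|v\|_{L^\infty}\lesssim (1+\tau)^{-\frac n{2\sigma}+[\varepsilon_2]^+}\ell_2(\tau)\|(u,v)\|_X$. Since $\ell_2$ varies slowly, this is $\le c(1+\tau)^{-\gamma}$ for a small $\gamma>0$, and $\mu_1$ is evaluated essentially at $c(1+\tau)^{-\gamma}$. The exponent of $(1+\tau)$ in the integrand is exactly $-1$, precisely because $(p^*,q^*)$ lies on \eqref{critcurve}; the quantities $\varepsilon_1,\varepsilon_2$ in \eqref{con4thr1.1} record this loss. The remaining factor is $\mu_1(c(1+\tau)^{-\gamma})$ times a power of $\ell_2$, and through $\ell_2$ a factor $\mu_2$ from the other equation. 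Changing variables $s=c(1+\tau)^{-\gamma}$ turns $\int_0^t(1+\tau)^{-1}\mu_1(c(1+\tau)^{-\gamma})\,d\tau$ into $\gamma^{-1}\int_{c(1+t)^{-\gamma}}^c \mu_1(s)s^{-1}ds$. Under \eqref{con2thr1.1} this is bounded, so $\ell_1=\ell_2=1$ suffices. On $[t/2,t]$ the factor $(1+\tau)\sim(1+t)$ is frozen. Here we use \eqref{con1thr1.1}, which makes $\mu_j(c(1+\tau)^{-\gamma})\sim\mu_j(c(1+t)^{-\gamma})$ on such intervals, and $\mu_j\le C$.

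The main obstacle is case ii), and symmetrically iii), where only the mixed integral is finite. Say \eqref{con3athr1.1} holds. Then $v$ carries no loss ($\ell_2=1$, $\varepsilon_2\le0$), while $u$ carries the growth $(1+t)^{[\varepsilon_1]^+}\ell_1(t)$ with $\ell_1=(\mu_1/\mu_2)^{1/(q^*+1)}$. In the $u$-equation the integrand yields $\int_0^t(1+\tau)^{-1}\mu_1(c(1+\tau)^{-\gamma})d\tau$, which may diverge. We must show it is $\lesssim \ell_1(t)$ times a convergent factor. For the $v$-equation, the source $|u|^{q^*}\mu_2$ contributes $\ell_1^{q^*}\mu_2=\mu_1^{q^*/(q^*+1)}\mu_2^{1/(q^*+1)}$, whose $ds/s$ integral is finite by \eqref{con3athr1.1}; so $v$ needs no loss. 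Feeding this back, the $u$-integral is $\int (1+\tau)^{-1}\mu_1(\dots)\,d\tau$. We plan to bound it by writing $\mu_1=\ell_1\cdot\mu_1^{q^*/(q^*+1)}\mu_2^{1/(q^*+1)}$ and pulling out $\sup_{\tau\le t}\ell_1(\tau)$. This requires $\ell_1$ to be essentially increasing in $t$, i.e. $\mu_1/\mu_2$ decreasing in $s$. When $p^*<p_{\rm c}$ and $q^*>q_{\rm c}$, any lack of monotonicity can be absorbed by the strict power gap in $(1+t)^{\varepsilon_1}$ together with \eqref{con1thr1.1}, since $\ell_1$ then changes at most polynomially with an arbitrarily small exponent controlled by $\gamma$. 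In the doubly critical case $p^*=p_{\rm c}$, $q^*=q_{\rm c}$ no power is available, and this is exactly where the extra hypothesis that $\mu_1/\mu_2$ is decreasing is used. Finally, the Lipschitz estimate follows the same way from $||a|^p\mu(|a|)-|b|^p\mu(|b|)|\lesssim|a-b|(|a|+|b|)^{p-1}\mu(|a|+|b|)$, which uses \eqref{con1thr1.1} and $p^*\ge2$. Continuity in time and uniqueness are then standard.
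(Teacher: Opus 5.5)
Your overall architecture is the paper's. You set up a contraction in a space weighted by the claimed rates and freeze $\mu_j$ at the $L^\infty$-rate of the other component. You use the critical curve \eqref{critcurve} to produce the exponent $-1$ once $(1+t)^{[\varepsilon_1(p^*)]^+}$ is pulled out. In case ii) you use the factorization $\mu_1=\ell_1\,\mu_1^{q^*/(q^*+1)}\mu_2^{1/(q^*+1)}$ together with monotonicity of $(1+t)^{\varepsilon_1(p^*)}\ell_1(t)$ for small $\gamma$ (Lemma \ref{ell1ell2}). The extra hypothesis on $\mu_1/\mu_2$ enters exactly when $\varepsilon_1(p^*)=0$. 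Your H\"older bound $\|v\|_{L^\infty}^{p^*-2/m}\|v\|_{L^2}^{2/m}$ gives the same rates as the paper's Gagliardo--Nirenberg step.

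The one step that fails as written is using \emph{only} $L^2$ estimates on $[t/2,t]$. This is harmless for $v^{\mathrm{nl}}$ and for $\|u^{\mathrm{nl}}\|_{\dot H^\sigma}$. It fails for $\|u^{\mathrm{nl}}\|_{L^\infty}$ and $\|u^{\mathrm{nl}}\|_{L^{q^*}}$. Proposition \ref{Linear estimates hyp} with $\alpha_1=m=2$, $\alpha_2=\infty$ gives for the kernel of \eqref{eqlinearhyp} only the factor $(1+t-\tau)^{1-\frac{n}{2\sigma}}$. This cannot be improved: the $L^2\to L^\infty$ norm of convolution with the kernel equals the kernel's $L^2$ norm, which is $\sim t^{1-\frac{n}{2\sigma}}$ because of the frequencies $|\xi|\lesssim t^{-1/\sigma}$. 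Combined with $\||v|^{p^*}\mu_1(|v|)\|_{L^2}\lesssim(1+\tau)^{-1+[\varepsilon_1(p^*)]^+-\frac{n}{4\sigma}}(\cdots)$, the interval $[t/2,t]$ contributes $(1+t)^{1+[\varepsilon_1(p^*)]^+-\frac{3n}{4\sigma}}$ to $\|u^{\mathrm{nl}}(t,\cdot)\|_{L^\infty}$. That exceeds the target $(1+t)^{1-\frac{n}{\sigma}+[\varepsilon_1(p^*)]^+}$ by $(1+t)^{\frac{n}{4\sigma}}$. More generally, with $L^r$ data, $r\in[1,2]$, the loss is $(1+t)^{\frac{n}{2\sigma}(1-\frac1r)}$, so only $r=1$ works. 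The same loss occurs for the $L^{q^*}$ norm, where Proposition \ref{Linear estimates hyp} does not even supply an $L^2$-based estimate: with $m=2$ the condition $\frac12\le\frac1m-\frac1{\alpha_2}$ forces $\alpha_2=\infty$. On the critical curve there is no slack in the exponents, so this power loss cannot be absorbed into the weights. It enters $\||u|^{q^*}\mu_2(|u|)\|_{L^1}$ and destroys the exponent $-1$ in the $v$-equation.

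The fix is to keep the $(L^1\cap L^2)$--$L^k$ estimate \eqref{eq2thr1.1} on $[t/2,t]$ as well. With $\alpha_1=\frac{n}{\sigma}(1-\frac1k)-1<1$ one has $\int_{t/2}^t(1+t-\tau)^{-\alpha_1}d\tau\lesssim(1+t)^{1-\alpha_1}$. Multiplying by $(1+t)^{-1+[\varepsilon_1(p^*)]^+}$ and the frozen modulus factor gives exactly the target rate. This is what the paper does, except that it does not split at all. It applies \eqref{eq2thr1.1}--\eqref{eq5thr1.1} on all of $[0,t]$ and then Lemma \ref{integralmoduluslemma}, which needs only $\alpha_1\le 1$ and the monotonicity of $\mu_j$. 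So \eqref{con1thr1.1} is not needed to freeze $\mu_j$ near $\tau=t$; it is used for Lemma \ref{ell1ell2} and the Lipschitz estimate. With this correction your argument goes through.
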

\begin{remark}
{\rm
   The terms $\varepsilon_1(p^*),\, \varepsilon_2(q^*)$ and the functions $\ell_1(t),\,\ell_2(t)$ present some loss of decay for solutions to the nonlinear problem \eqref{mainsystem} in comparison with the corresponding linear problems \eqref{eqlinearhyp}, \eqref{eqlinearpar}. In the special case $p^*=p_{\rm c},\,q^*=q_{\rm c}$, if $\mu_1(s)/\mu_2(s)$ is an increasing function, then the Sobolev solutions exist globally, provided that the condition \eqref{con3athr1.1} is replaced by the condition \eqref{con3bthr1.1}.
}
\end{remark}
In the case where the conditions \eqref{con3athr1.1} and \eqref{con3bthr1.1} are violated, we demonstrate that the solution blows up in finite time, that is, the problem \eqref{mainsystem} admits no global (in time) Sobolev solution. For this purpose, let us introduce
\begin{equation}\label{critnonlinearity}
    \Phi_p:=\Phi_p(s)=s^{p^*} \mu_1(s) \text { and } \Phi_q:=\Phi_q(s)=s^{q^*} \mu_2(s).
\end{equation}
\begin{definition}\label{def_weaksolution}
 The pair of functions $(u,v)$ is called a global (in time) Sobolev solution to \eqref{mainsystem} if 
 \[
 \Phi_p(|v(t,x)|),\, \Phi_q(|u(t,x)|)\in L^1\big((0,\infty)\times \R^n\big)
 \]
 and the following identities hold:
    \[
    \begin{aligned}
    &\int_{0}^{\infty}\int_{\R^n}u(t,x)\left[\partial^2_t-\partial_t(-\Delta)^{\sigma}+(-\Delta)^{\sigma}\right]\phi_1(t,x)dxdt-\int_{\mathbb{R}^n}u_1(x)\phi_1(0,x)dx\\
    &\qquad=\int_{0}^{\infty}\int_{\R^n}\Phi_{p}(|v(t,x)|)\phi_1(t,x)dxdt, \\
    &\int_{0}^{\infty}\int_{\R^n}v(t,x)\left[\partial^2_t-\partial_t+(-\Delta)^{\sigma}\right]\phi_2(t,x)dxdt-\int_{\mathbb{R}^n}v_1(x)\phi_2(0,x)dx\\
    &\qquad =\int_{0}^{\infty}\int_{\R^n}\Phi_q(|u(t,x)|)\phi_2(t,x)dxdt 
\end{aligned}
    \]
    for any test function $\phi_{i}(t,x)$ of the form $\phi_{i}(t,x)=\eta_i(t)\varphi_{i}(x)$, where $\eta_i(t) \in \mathcal{C}^{\infty}_0\big([0,\infty)\big)$, $\eta_i\equiv 1$ in a neighborhood of $0$, and $\varphi_{i}(x)\in H^{2\sigma}$ with $i=1,2$.
\end{definition}
\begin{remark}
{\rm
     Thanks to the embedding result $H^{2\sigma}\hookrightarrow L^{\infty}$ for $n<2\sigma$, it follows that
    \[
    \int_{0}^{\infty}\int_{\R^n}\Phi_p(|v(t,x)|)\phi_1(t,x)dxdt \lesssim \|\varphi_1\|_{L^{\infty}}\left\|\Phi_p\right\|_{L^1((0,\infty)\times \R^n)}<+\infty,
    \]
    similarly,
    \[
    \int_{0}^{\infty}\int_{\R^n}\Phi_q(|u(t,x)|)\phi_2(t,x)dxdt \lesssim \|\varphi_2\|_{L^{\infty}}\left\|\Phi_q\right\|_{L^1((0,\infty)\times \R^n)}<+\infty,
    \]
    which show that Definition \ref{def_weaksolution} makes sense.
}
\end{remark}
\begin{theorem}\label{thr1.2}
    Let $1<\sigma<n<2\sigma$. We assume that the initial data $u_1,v_1\in L^1 $ satisfy
    \begin{equation}\label{con1thr1.2}
        \int_{\mathbb{R}^n}u_1(x)dx>0\quad \text{and}\quad \int_{\mathbb{R}^n}v_1(x)dx>0.
    \end{equation}
    Moreover, we suppose the following assumptions of the moduli of continuity
    \begin{equation}\label{con2thr1.2}
        s^{k}\mu^{(k)}_{j}(s)=\mathcal{O}(\mu_j(s))\text{ as }s\to +0\text{ with }j,k=1,2,
    \end{equation}
    together with
    \begin{itemize}
         \item[\rm i)] 
     \begin{equation}\label{con3thr1.2}
            \di\int_{0}^c\frac{1}{s}\tron{\mu_1(s)}^{\frac{q^*}{q^*+1}}\tron{\mu_2(s)}^{\frac{1}{q^*+1}}\;ds=\infty \text{ if }q^*\geq q_{\rm c},\, p^*\leq p_{\rm c},
            \end{equation}
         \item[\rm ii)]     
            \begin{equation}\label{con4thr1.2}
            \di\int_{0}^c\frac{1}{s}\tron{\mu_1(s)}^{\frac{1}{p^*+1}}\tron{\mu_2(s)}^{\frac{p^*}{p^*+1}}\;ds=\infty\text{ if }q^*\leq q_{\rm c},\, p^*\geq p_{\rm c},
            \end{equation}
    \end{itemize}
    where $c>0$ is a suitable small constant and $(p^*,q^*)$ belongs to the curve \eqref{critcurve}. Especially, when $\sigma$ is not an integer and \eqref{con4thr1.2} holds, we require the additional assumption that $n\geq 2\flo{\sigma}$. Then, there is no global (in time) Sobolev solution $(u,v)$ to \eqref{mainsystem} in sense of Definition \ref{def_weaksolution}.
\end{theorem}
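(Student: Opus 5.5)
We argue by contradiction. Assume that a global Sobolev solution $(u,v)$ in the sense of Definition \ref{def_weaksolution} exists, and use the test function method with a modified, non-compactly supported spatial weight; this handles the nonlocal operator $(-\Delta)^{\sigma}$. For a large parameter $R$ we take
\[
\phi_i(t,x)=\eta_R(t)\varphi_R(x),\qquad \eta_R(t)=\eta(t/R),\qquad \varphi_R(x)=\langle x/R^{1/(2\sigma)}\rangle^{-n-2\sigma},
\]
where $\eta\in\mathcal{C}_0^\infty([0,\infty))$, $\eta\equiv1$ on $[0,1/2]$, $\eta\equiv 0$ on $[1,\infty)$, and $\langle y\rangle=(1+|y|^2)^{1/2}$. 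The parabolic scaling $|x|\sim R^{1/(2\sigma)}$ is dictated by the $v$-equation. Then $\varphi_R\in H^{2\sigma}$, and we rely on the known pointwise bound $|(-\Delta)^{\sigma}\langle y\rangle^{-n-2\sigma}|\lesssim \langle y\rangle^{-n-2\sigma}$. For non-integer $\sigma$ this bound is where the restriction $n\ge 2\flo{\sigma}$ enters in case ii). Rescaling gives $|(-\Delta)^\sigma\varphi_R|\lesssim R^{-1}\varphi_R$.

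Set
\[
I_R=\int\!\!\int \Phi_p(|v|)\phi_1\,dx\,dt,\qquad J_R=\int\!\!\int \Phi_q(|u|)\phi_2\,dx\,dt .
\]
Testing both identities and using \eqref{con1thr1.2} (the data terms tend to $\int u_1$, $\int v_1>0$ and have the favourable sign) yields
\[
I_R+C_1\lesssim \int\!\!\int |u|\,\bigl(R^{-2}+R^{-1}\bigr)\varphi_R\,|\eta^*_R| ,\qquad
J_R+C_2\lesssim \int\!\!\int |v|\,\bigl(R^{-2}+R^{-1}\bigr)\varphi_R\,|\eta^*_R| .
\]
The $u$-equation has the additional term $\partial_t(-\Delta)^\sigma\phi_1\sim R^{-2}$, and $\eta^*_R$ denotes the derivative cut-offs together with $\eta_R$ itself for the $(-\Delta)^\sigma$ term.

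To reach the critical case we replace the Hölder step with exponents $p^*$ and $q^*$ by Jensen's inequality applied to the convex functions $\Phi_p,\Phi_q$. The convexity of $\Phi_p,\Phi_q$ and their doubling-type behaviour come from \eqref{con2thr1.2} with $k=1,2$. With respect to the probability measure $\phi\,dx\,dt/\!\int\!\phi$, and since $\int\!\!\int\phi\sim R^{1+n/(2\sigma)}$, this gives
\[
\int\!\!\int|u|\phi_2\lesssim R^{1+\frac{n}{2\sigma}}\,\Phi_q^{-1}\!\bigl(J_R R^{-1-\frac{n}{2\sigma}}\bigr),
\]
and similarly for $v$.

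Chaining the two inequalities produces a single scalar inequality for $I_R$ (or $J_R$). On the critical curve \eqref{critcurve} all powers of $R$ cancel, so that in case i), for instance,
\[
C\le I_R\lesssim \Psi\bigl(R^{-\gamma}\bigr),
\]
with a function $\Psi$ built from $\mu_1,\mu_2$. Whether $I$ or $J$ is the quantity to chain depends on which branch of the max in \eqref{critcurve} is active. This is exactly why case i) ($q^*\ge q_{\rm c}$) and case ii) ($p^*\ge p_{\rm c}$) lead to the different integrals \eqref{con3thr1.2} and \eqref{con4thr1.2}.

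A single scale only gives boundedness, not blow-up. To extract the divergence of the integral we use the refined argument of Dao--Reissig \cite{AnhRei2021}. Replace the time cut-off by one supported in $[R/2,R]$ in the derivative terms, and define
\[
Y(R)=\int_1^R \Bigl(\int\!\!\int \Phi_p(|v|)\phi_{1,\rho}\Bigr)\frac{d\rho}{\rho}.
\]
Then $R\,Y'(R)$ controls the right-hand side localized to $t\sim R$. Combining this with the Jensen step gives a differential inequality of the form
\[
Y(R)\lesssim \bigl(R\,Y'(R)\bigr)^{\theta}\,\Lambda(R),\qquad \theta=\frac{q^*}{q^*+1}\ \text{or}\ \frac{1}{p^*+1},
\]
where $\Lambda(R)$ is a combination of $\mu_1(R^{-\gamma})$ and $\mu_2(R^{-\gamma})$. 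Integrating $Y^{-1/\theta}Y'\gtrsim \Lambda^{-1/\theta}/R$ from $R_0$ to $\infty$ contradicts $Y\ge C>0$ as soon as
\[
\int^\infty \frac{1}{R}\,\mu_1(R^{-\gamma})^{a}\,\mu_2(R^{-\gamma})^{b}\,dR=\infty ,
\]
which after the substitution $s=R^{-\gamma}$ is \eqref{con3thr1.2} or \eqref{con4thr1.2}.

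The main obstacle is this last step: tracking the exact mixing exponents $\tfrac{q^*}{q^*+1},\tfrac{1}{q^*+1}$ (respectively $\tfrac{1}{p^*+1},\tfrac{p^*}{p^*+1}$) through the coupled Jensen chain. This has to be done while using \eqref{con2thr1.2} to replace $\mu_j(\Phi^{-1}(\cdot))$ by $\mu_j$ evaluated at a power of $R^{-1}$, and while controlling the non-compact tails of $\varphi_R$ uniformly in $R$.
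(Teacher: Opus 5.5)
There is a genuine gap, and it appears at the scaling step, before any modulus of continuity matters. You test both equations at the parabolic scale $|x|\sim R^{1/(2\sigma)}$, $t\sim R$. In the $u$-equation the dominant term is then $(-\Delta)^\sigma\phi_1=O(R^{-1})$, as your factor $R^{-2}+R^{-1}$ shows. Running your chain with $\mu_1=\mu_2\equiv1$ gives
\[
I_R\lesssim R^{\frac{n}{2\sigma}-\frac{p^*+1}{p^*q^*-1}},\qquad J_R\lesssim R^{\frac{n}{2\sigma}-\frac{q^*+1}{p^*q^*-1}}.
\]
This is the critical curve $\frac{\max\{p,q\}+1}{pq-1}=\frac{n}{2\sigma}$ of two parabolic-like equations, not \eqref{critcurve}. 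On \eqref{critcurve} these exponents are positive. For $\sigma=2$, $n=3$, $(p^*,q^*)=(2,10)$ one has $\frac{2q^*+1}{p^*q^*+q^*-2}=\frac{21}{28}=\frac{n}{2\sigma}$, the other branch is $\frac{23}{37}<\frac34$, and $q^*\ge q_{\rm c}=5$, $p^*\le p_{\rm c}=\frac73$. Yet $\frac{q^*+1}{p^*q^*-1}=\frac{11}{19}<\frac34$. So the powers of $R$ do not cancel, and no later refinement repairs this.

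The $\sigma$-evolution-like $u$-equation must be tested at the scale $|x|\sim R^{1/\sigma}$. There $\partial_t^2\phi_{1,R}$ and $(-\Delta)^\sigma\phi_{1,R}$ are $O(R^{-2})$ and $\partial_t(-\Delta)^\sigma\phi_{1,R}=O(R^{-3})$, which gives
\begin{align*}
I_R+\mathcal{D}(u_1)&\lesssim R^{\frac{n}{\sigma}-1}\,\Phi_q^{-1}\bigl(G_q'(R)/(CR^{\frac{n}{\sigma}})\bigr),\\
J_R+\mathcal{D}(v_1)&\lesssim R^{\frac{n}{2\sigma}}\,\Phi_p^{-1}\bigl(G_p'(R)/(CR^{\frac{n}{2\sigma}})\bigr).
\end{align*}
After eliminating $G_p$, these powers combine to exactly $R^{-1}$ on \eqref{critcurve}. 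This is why the paper uses $\phi_{1,R}$ and $\phi_{2,R}$ with different spatial scales. The mismatch between the two weights on $\Phi_q(|u|)$ is absorbed via $\Phi_q(|u|)\in L^1$, which gives $\tilde J_R\le 2J_R$ for large $R$.

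Even at the correct scales, two ingredients are missing. First, the dominant term $\iint|u|\,|(-\Delta)^\sigma\phi|$ is not localized to $t\sim R$. It lives on the whole slab $[0,R]\times\mathbb{R}^n$, including a neighbourhood of the origin where $\phi_\rho$ is bounded below uniformly in $\rho\ge2$. There the inner integral $\int_1^R\phi_\rho(t,x)\,\frac{d\rho}{\rho}$ in your $Y(R)$ is of size $\log R$. So $Y(R)$ cannot be bounded by the nonlinear functional, while dropping this term leaves the right-hand side uncontrolled. The paper handles this as follows:
\begin{itemize}
\item it splits the domain into $Q^1_{j,R}$, $Q^2_R$ and $Q^3_{j,R}$;
\item on the near region $Q^3_{j,R}$ it applies Jensen with the singular weight $[\xi_{j,R}]^{-a/(q^*-1)}$, so that $g_3(\lambda)$ carries $[\xi_{j,\lambda}(x)]^a$ and $\int_0^R[\xi_{j,\lambda}(x)]^a\frac{d\lambda}{\lambda}\le\frac{j\sigma}{a}$;
\item it treats $A_3$, $B_3$ separately, with $a$ small.
\end{itemize}

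Second, the step you call the main obstacle is the heart of the proof, and it is not a one-functional Jensen chain. One obtains the coupled system $G_p'\gtrsim\tau_1G_q^{p^*}+\dots$, $G_q'\gtrsim\tau_2G_p^{q^*}+\dots$. Multiplying the first inequality by $G_q'$ and integrating by parts gives $G_pG_q'\gtrsim\tau_1G_q^{p^*+1}$. This requires controlling $\int G_pG_q''$ through the estimates of the $A_i$. It is in the estimate of $A_1$, not in the pointwise bound for $(-\Delta)^\sigma$, that $n\ge2\flo{\sigma}$ enters in case ii). Inserting this into the second inequality yields
\[
G_q'(R)\gtrsim R^{-1}\mu_1\bigl(C_0R^{-\frac{n}{2\sigma}}\bigr)^{\frac{q^*}{q^*+1}}\mu_2\bigl(C_0R^{-\frac{n}{\sigma}+1}\bigr)^{\frac{1}{q^*+1}}\,G_q(R)^{\frac{q^*(p^*+1)}{q^*+1}}.
\]
The mixed exponents therefore come from eliminating $G_p$, and the power of $G_q$ is $\frac{q^*(p^*+1)}{q^*+1}$, not the $1/\theta$ of your ansatz.
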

\begin{remark}
{\rm
Theorem \ref{thr1.2} tells us that our result gives a non-trivial improvement over the existing literature. The novelty of this work comes from handling the fractional Laplacian operators by setting them in the weakly coupled system with the critical case.
}
\end{remark}
\begin{remark}
\rm{
    We refer interested readers to Example 1.1 and Example 1.2 in \cite{AnhRei2021} for admissible choices of $\mu_1, \mu_2$ satisfying \eqref{con2thr1.1} or \eqref{con3athr1.1} or \eqref{con3bthr1.1} in Theorem \ref{thr1.1}, and \eqref{con3thr1.2} or \eqref{con4thr1.2} in Theorem \ref{thr1.2}, respectively. 
}
\end{remark}
\section{Global existence of Sobolev solutions}
In this section, we give the proof of Theorem \ref{thr1.1}. We denote by $K_0=K_0(t,x)$ and $K_1=K_1(t,x)$ the fundamental solutions to the linear Cauchy problems \eqref{eqlinearhyp} and \eqref{eqlinearpar}, respectively. Then, the solutions to \eqref{mainsystem} with vanishing right-hand sides are defined by
$$
\left\{\begin{array}{l}
u^{\mathrm{lin}}(t,x)=K_0(t,x) *_x u_1(x), \\
v^{\mathrm{lin} }(t,x)=K_1(t,x) *_x v_1(x) .
\end{array}\right.
$$
Thanks to Duhamel's principle, the solutions to \eqref{mainsystem} are written by the following system of nonlinear integral equations:
\[
\left\{
\begin{aligned}
u(t,x)=u^{\mathrm{lin}}(t,x)+\int_0^t K_0(t-\tau,x) *_x\left(|v(\tau,x)|^{p^*} \mu_1(|v(\tau,x)|)\right) d \tau=: u^{\mathrm{lin}}(t,x)+u^{\mathrm{nl}}(t,x), \\
v(t,x)=v^{\mathrm{lin}}(t,x)+\int_0^t K_1(t-\tau,x) *_x\left(|u(\tau,x)|^{q^*} \mu_2(|u(\tau,x)|)\right) d \tau=: v^{\mathrm{lin} }(t,x)+v^{\mathrm{nl}}(t,x) .
\end{aligned}
\right.
\]
For all $t>0$, we define the operator in a solution space $X(t)$ as follows:
$$
\Psi: \quad(u, v) \in X(t) \mapsto \Psi(u, v)(t,x)=\left(u^{\mathrm{lin} }(t,x)+u^{\mathrm{nl}}(t,x), v^{\mathrm{lin} }(t,x)+v^{\mathrm{nl}}(t,x)\right) .
$$
Our aim is to apply Banach's fixed point theorem in the proof of global (in time) existence of small data Sobolev solutions to \eqref{mainsystem}. For this purpose, we need to verify the following two inequalities:
\begin{equation}\label{eqbanachfixedpoint1}
    \|\Psi(u, v)\|_{X(t)} \lesssim\left\|\left(u_1,v_1\right)\right\|_{\mathcal{D}}+\|(u, v)\|_{X(t)}^{p^*}+\|(u, v)\|_{X(t)}^{q^*},
\end{equation}
\begin{equation}\label{eqbanachfixedpoint2}
\begin{aligned}
\|\Psi(u, v)-\Psi(\tilde{u}, \tilde{v})\|_{X(t)} &\lesssim\|(u, v)-(\tilde{u}, \tilde{v})\|_{X(t)}\left(\|(u, v)\|_{X(t)}^{p^*-1}+\|(\tilde{u}, \tilde{v})\|_{X(t)}^{p^*-1}\right. \\
&\qquad\left.+\|(u, v)\|_{X(t)}^{q^*-1}+\|(\tilde{u}, \tilde{v})\|_{X(t)}^{q^*-1}\right).
\end{aligned}
\end{equation}
 
\subsection{Key estimates and inequalities} To establish \eqref{eqbanachfixedpoint1} and \eqref{eqbanachfixedpoint2}, we recall some useful estimates for solutions to the linear problems as well as fundamental inequalities, which will be employed in our proofs. Namely, the two corresponding linear problems of \eqref{mainsystem} we have in mind are
\begin{equation}\label{eqlinearhyp}
\begin{cases}
    w_{tt}(t,x)+(-\Delta)^{\sigma}w(t,x)+(-\Delta)^{\sigma}w_t(t,x)=0, &x\in \mathbb{R}^n,\,t\geq 0,\\
    (w,w_t)(0,x)=(0,w_1)(x), &x\in \mathbb{R}^n,
\end{cases}
\end{equation}
and 
\begin{equation}\label{eqlinearpar}
\begin{cases}
    w_{tt}(t,x)+(-\Delta)^{\sigma}w(t,x)+w_t(t,x)=0, &x\in \mathbb{R}^n,\,t\geq 0,\\
    (w,w_t)(0,x)=(0,w_1)(x), &x\in \mathbb{R}^n,
\end{cases}
\end{equation}
\begin{proposition}[see \cite{DAbEbe2021}] \label{Linear estimates hyp} Let $\sigma>0$ and $\sigma\ne 1$. Assume that $m \in [1,2]$ and $w_1 \in L^m \cap L^2$. Then, the following estimate for Sobolev solutions to \eqref{eqlinearhyp} holds:
\begin{align*}
    \|\big(|D|^{\sigma},\partial_t\big)w(t,\cdot)\|_{L^2} &\lesssim (1+t)^{-\frac{n}{2\sigma}(\frac{1}{m}-\frac{1}{2})} \|w_1\|_{L^m \cap L^2}.
\end{align*}
Furthermore, if the following conditions are provided for $1 \leq \alpha_1 \leq m \leq \alpha_2 \leq \infty$:
$$ n\left(\frac{1}{\alpha_1}-\frac{1}{\alpha_2}\right)+n\sigma \max\left\{\frac{1}{2}-\frac{1}{\alpha_1}, \frac{1}{\alpha_2}-\frac{1}{2}\right\} < \sigma $$
and
$$ \frac{1}{2} \leq \frac{1}{m}-\frac{1}{\alpha_2} < \frac{2\sigma}{n}, $$
then we conclude that
\begin{align*}
    \|w(t,\cdot)\|_{L^{\alpha_2}} \lesssim (1+t)^{-\frac{n}{\sigma}(\frac{1}{\alpha_1}-\frac{1}{\alpha_2})+1}\|w_1\|_{L^{\alpha_1}} + e^{-ct} \|w_1\|_{L^m},
\end{align*}
where $c$ is a suitable positive constant.
\end{proposition}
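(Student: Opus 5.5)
The statement is Proposition \ref{Linear estimates hyp}, the linear estimates for \eqref{eqlinearhyp}. I would prove it by Fourier analysis. In Fourier variables the solution is $\hat w(t,\xi)=\hat K_0(t,\xi)\hat w_1(\xi)$, where
\[
\hat K_0(t,\xi)=\frac{e^{\lambda_+t}-e^{\lambda_-t}}{\lambda_+-\lambda_-}
\]
and $\lambda_\pm$ are the roots of $\lambda^2+|\xi|^{2\sigma}\lambda+|\xi|^{2\sigma}=0$. I would split frequency space with cutoffs $\chi_{\rm low},\chi_{\rm mid},\chi_{\rm high}$, using the fact that the discriminant $|\xi|^{4\sigma}-4|\xi|^{2\sigma}$ changes sign at $|\xi|^{2\sigma}=4$.

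\textbf{Frequency zones.}
\begin{itemize}
\item For small $|\xi|$ the roots are complex: $\lambda_\pm=-\tfrac12|\xi|^{2\sigma}\pm i|\xi|^\sigma\sqrt{1-|\xi|^{2\sigma}/4}$. This gives
\[
\hat K_0\approx e^{-|\xi|^{2\sigma}t/2}\,\frac{\sin(|\xi|^\sigma t)}{|\xi|^\sigma},
\]
an oscillating $\sigma$-evolution kernel with parabolic damping of order $2\sigma$.
\item For large $|\xi|$ the roots are real, with $\lambda_+\to-1$ and $\lambda_-\sim-|\xi|^{2\sigma}$. This gives $|\hat K_0|\lesssim e^{-ct}|\xi|^{-2\sigma}$.
\item On the compact middle zone everything decays like $e^{-ct}$.
\end{itemize}

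\textbf{Energy estimate.} The bound for $\|(|D|^\sigma,\partial_t)w\|_{L^2}$ follows from Plancherel together with the Hausdorff--Young bound $\|\hat w_1\|_{L^{m'}}\le\|w_1\|_{L^m}$.
\begin{itemize}
\item At low frequencies, $|\xi|^\sigma|\hat K_0|$ and $|\partial_t\hat K_0|$ are $\lesssim e^{-c|\xi|^{2\sigma}t}$. Applying Hölder with exponents $\tfrac{m'}{2}$ and its dual gives
\[
\|e^{-c|\xi|^{2\sigma}t}\|_{L^{r}}\sim t^{-\frac{n}{2\sigma r}}, \qquad \tfrac1r=\tfrac1m-\tfrac12,
\]
which is the claimed rate $(1+t)^{-\frac{n}{2\sigma}(\frac1m-\frac12)}$.
\item At high frequencies, $|\xi|^\sigma|\hat K_0|\lesssim e^{-ct}$. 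The only subtle term is $\partial_t\hat K_0$, which contains $e^{\lambda_-t}\lambda_-/(\lambda_+-\lambda_-)=\mathcal O(1)$ with no time decay. Here the exponential decay is unavailable, but boundedness in $L^2$ by $\|w_1\|_{L^2}$ is all that is needed for $t\lesssim1$. For $t\ge1$ the factor $e^{-|\xi|^{2\sigma}t}$ is small, so this term is again $\lesssim e^{-ct}\|w_1\|_{L^2}$.
\end{itemize}

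\textbf{$L^{\alpha_1}\to L^{\alpha_2}$ estimate.} This is the main obstacle, because a genuine $L^p$–$L^q$ bound needs multiplier or kernel estimates rather than Plancherel.
\begin{itemize}
\item \emph{Low zone.} I would write the multiplier as $\hat K_0\chi_{\rm low}=|\xi|^{-\sigma}\sin(|\xi|^\sigma t)e^{-|\xi|^{2\sigma}t/2}$ plus smoother corrections, and rescale $\xi\to t^{-1/\sigma}\xi$ (the oscillation scale). This shows the kernel satisfies $\|\mathcal F^{-1}(\cdot)\|_{L^r}\lesssim t^{1-\frac n\sigma(1-\frac1r)}$, with $\tfrac1r=1-\tfrac1{\alpha_1}+\tfrac1{\alpha_2}$, whenever the rescaled symbol $|\eta|^{-\sigma}\sin(|\eta|^\sigma)e^{-|\eta|^{2\sigma}/(2t)}$ lies in the Fourier image of $L^r$ uniformly in $t$. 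Young's inequality then gives the rate $(1+t)^{1-\frac n\sigma(\frac1{\alpha_1}-\frac1{\alpha_2})}$.
\item \emph{Where the hypotheses enter.} The singularity $|\eta|^{-\sigma}$ at the origin requires $n(1-\tfrac1r)<\sigma\cdot$(appropriate index). This is the condition $\tfrac1m-\tfrac1{\alpha_2}<\tfrac{2\sigma}n$, interpolating with $\tfrac12\le\tfrac1m-\tfrac1{\alpha_2}$ from the $L^2$-based piece. The growing oscillation $e^{i|\eta|^\sigma}$ at infinity costs $n\sigma|\tfrac12-\tfrac1p|$ derivatives, by the standard Miyachi/Peral-type estimate for $e^{i|D|^\sigma}$. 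This is exactly where the first hypothesis $n(\tfrac1{\alpha_1}-\tfrac1{\alpha_2})+n\sigma\max\{\cdots\}<\sigma$ enters. I would verify it by a dyadic Littlewood–Paley decomposition, estimating each piece with stationary-phase bounds and summing the geometric series, which converges precisely under that condition. The exclusion $\sigma\ne1$ ensures the phase $|\eta|^\sigma$ has nondegenerate Hessian.
\item \emph{Middle and high zones.} These contribute $e^{-ct}$ times a bounded multiplier from $L^m$ to $L^{\alpha_2}$. Sobolev embedding is available because $|\xi|^{-2\sigma}$ gains $2\sigma$ derivatives and $\tfrac1m-\tfrac1{\alpha_2}<\tfrac{2\sigma}n$, which yields the $e^{-ct}\|w_1\|_{L^m}$ term.
\end{itemize}

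Since this proposition is quoted from \cite{DAbEbe2021}, in the paper I would simply cite it. The sketch above is how I would reconstruct it.
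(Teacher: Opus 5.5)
The paper gives no proof of this proposition; it only cites \cite{DAbEbe2021}. Your reconstruction therefore has to stand on its own. The zone decomposition, the energy estimate (Plancherel, H\"older, Hausdorff--Young) and the $e^{-ct}$ bounds in the middle and high zones are fine. The central low-frequency $L^{\alpha_1}$--$L^{\alpha_2}$ step, however, fails as you set it up.

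You bound the rescaled kernel in $L^r$ with $\frac1r=1-\frac1{\alpha_1}+\frac1{\alpha_2}$ and conclude by Young's inequality. By stationary phase, the dyadic piece $\mathcal F^{-1}\big(\beta(2^{-k}\eta)e^{i|\eta|^\sigma}\big)$ has size $2^{kn(1-\sigma/2)}$ on the region $|x|\sim 2^{k(\sigma-1)}$ and is negligible elsewhere. Its $L^r$ norm is therefore about $2^{k[n(1-\sigma/2)+n(\sigma-1)/r]}$, and summing against $2^{-k\sigma}$ requires
\[
n\Big(\frac1{\alpha_1}-\frac1{\alpha_2}\Big)+n\sigma\Big(\frac12-\frac1{\alpha_1}+\frac1{\alpha_2}\Big)<\sigma .
\]
Now $\frac12-\frac1{\alpha_1}+\frac1{\alpha_2}\ge\max\{\frac12-\frac1{\alpha_1},\frac1{\alpha_2}-\frac12\}$, with equality only if $\alpha_1=1$ or $\alpha_2=\infty$. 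So Young's inequality reproduces the stated hypothesis only on those edges. That includes the case $\alpha_1=1$, which is the only one the paper uses, but not the proposition in general.

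For a concrete failure, take $n=5$, $\sigma=2$, $\alpha_1=m=\frac43$, $\alpha_2=4$. All hypotheses hold, but $r=2$, and the rescaled symbol $|\eta|^{-\sigma}\sin(|\eta|^\sigma)e^{-|\eta|^{2\sigma}/(2t)}$ has $L^2$ norm growing like $t^{1/8}$, so Young loses a power of $t$.

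The missing idea is to estimate the \emph{operator} norm of each dyadic piece rather than the $L^r$ norm of its kernel. Interpolate by Riesz--Thorin between three bounds:
\begin{itemize}
\item $L^1\to L^\infty$, from stationary phase: $2^{kn(1-\sigma/2)}$;
\item $L^2\to L^2$, from Plancherel: $1$;
\item $L^1\to L^1$ and $L^\infty\to L^\infty$, from the kernel's $L^1$ norm: $2^{kn\sigma/2}$.
\end{itemize}
This gives the bound $2^{k\theta}$ with $\theta=n(\frac1{\alpha_1}-\frac1{\alpha_2})+n\sigma\max\{\frac12-\frac1{\alpha_1},\frac1{\alpha_2}-\frac12\}$. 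The series $\sum_k 2^{k(\theta-\sigma)}$ then converges exactly under the first hypothesis. This is the Miyachi-type argument you name, but your sketch then replaces it by Young's inequality, and the two are not equivalent.

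Two smaller points. First, your account of the second hypothesis is off. If you keep $\sin(|\eta|^\sigma)/|\eta|^\sigma$ together near $\eta=0$, it is bounded there, so there is no singularity at the origin to control. The inequalities $\frac12\le\frac1m-\frac1{\alpha_2}<\frac{2\sigma}n$ are exactly the conditions for bounding the kernel of $e^{-ct}|\xi|^{-2\sigma}\chi_{\rm high}$ in $L^r$, with $1-\frac1r=\frac1m-\frac1{\alpha_2}$, via Hausdorff--Young: one needs $r\ge 2$ and $|\xi|^{-2\sigma}\chi_{\rm high}\in L^{r'}$. Your Sobolev/Young treatment of that zone needs only the upper bound, which is harmless.

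Second, in the energy estimate $e^{\lambda_-t}$ does decay on the high zone, since $\lambda_-\le-\frac12|\xi|^{2\sigma}$ is bounded away from $0$ there. What is absent is only a gain in $\xi$, which is why $\|w_1\|_{L^2}$ is needed.
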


\begin{proposition}[see \cite{PhamRei2017,ChenGirardi2025}] \label{Linear estimate par}
   Let $\sigma\geq 1$ and $n<2\sigma$. Assume that $m \in [1,2]$ and $w_1 \in L^m \cap L^2$. Then, the following estimates for Sobolev solutions to \eqref{eqlinearpar} hold:
   \begin{align*}
       \|w(t,\cdot)\|_{L^2}&\lesssim (1+t)^{-\frac{n}{4\sigma}}\|w_1\|_{L^1\cap L^2},\\
       \||D|^{\sigma}w(t,\cdot)\|_{L^2}&\lesssim (1+t)^{-\frac{n}{4\sigma}-\frac{1}{2}}\|w_1\|_{L^1\cap L^2},\\
       \|w(t,\cdot)\|_{L^\infty}&\lesssim (1+t)^{-\frac{n}{2\sigma}}\|w_1\|_{L^1\cap L^2}.      
   \end{align*}
\end{proposition}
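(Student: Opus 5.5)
The plan is to work entirely on the Fourier side, using the explicit representation of the solution to \eqref{eqlinearpar}. Applying the partial Fourier transform in $x$, we get $\hat w(t,\xi)=\hat K_1(t,\xi)\hat w_1(\xi)$, where $\lambda_\pm(\xi)=\frac{-1\pm\sqrt{1-4|\xi|^{2\sigma}}}{2}$ and
\[
\hat K_1(t,\xi)=\frac{e^{\lambda_+ t}-e^{\lambda_- t}}{\lambda_+-\lambda_-}=e^{-t/2}\,\frac{\sinh\big(t\,d(\xi)\big)}{d(\xi)},\qquad d(\xi)=\sqrt{\tfrac14-|\xi|^{2\sigma}},
\]
with $d$ purely imaginary when $|\xi|^{2\sigma}>1/4$. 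I would fix a smooth partition of unity and split into three zones: $\{|\xi|\le\varepsilon_0\}$, $\{\varepsilon_0\le|\xi|\le N\}$ and $\{|\xi|\ge N\}$. The first step is to derive pointwise bounds for $\hat K_1$ in each zone.

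\begin{itemize}
\item \emph{Small frequencies.} Here $\lambda_+\sim-|\xi|^{2\sigma}$, $\lambda_-\sim-1$ and $\lambda_+-\lambda_-\sim 1$, so
\[
|\hat K_1(t,\xi)|\lesssim e^{-c|\xi|^{2\sigma}t}.
\]
\item \emph{Large frequencies.} The roots are complex with real part $-1/2$ and $|\lambda_+-\lambda_-|\sim|\xi|^{\sigma}$, so
\[
|\hat K_1(t,\xi)|\lesssim e^{-t/2}|\xi|^{-\sigma}.
\]
\item \emph{Middle zone.} This compact zone contains the degenerate point $|\xi|^{2\sigma}=1/4$, where $\lambda_+=\lambda_-$. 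Instead of the quotient $\frac{e^{\lambda_+t}-e^{\lambda_-t}}{\lambda_+-\lambda_-}$, I would use the $\sinh$ form together with $|\sinh(z)/z|\le e^{|\operatorname{Re} z|}$, valid for complex $z$. This gives
\[
|\hat K_1|\le t\,e^{-t/2}e^{t\operatorname{Re}d}\lesssim e^{-ct},
\]
since $\operatorname{Re}d\le\sqrt{1/4-\varepsilon_0^{2\sigma}}<1/2$ uniformly on the zone.
\end{itemize}

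The second step gives the two $L^2$ estimates via Plancherel.
\begin{itemize}
\item \emph{Low zone.} Using $\|\hat w_1\|_{L^\infty}\le\|w_1\|_{L^1}$, we get
\[
\big\||\xi|^{a}\hat K_1\hat w_1\big\|_{L^2(|\xi|\le\varepsilon_0)}\lesssim\|w_1\|_{L^1}\Big(\int_{\mathbb R^n}|\xi|^{2a}e^{-2c|\xi|^{2\sigma}t}\,d\xi\Big)^{1/2}\lesssim t^{-\frac{n}{4\sigma}-\frac{a}{2\sigma}}\|w_1\|_{L^1}.
\]
Taking $a=0$ and $a=\sigma$ gives the two decay rates $t^{-\frac{n}{4\sigma}}$ and $t^{-\frac{n}{4\sigma}-\frac12}$. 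For $t\le1$, the bound $|\hat K_1|\lesssim1$ already gives boundedness, which upgrades $t$ to $1+t$.
\item \emph{Middle and high zones.} These contribute $e^{-ct}\|w_1\|_{L^2}$, because $|\xi|^{\sigma}|\hat K_1|\lesssim e^{-ct}$ there.
\end{itemize}

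The third step is the $L^\infty$ estimate, using $\|w(t,\cdot)\|_{L^\infty}\le\|\hat w(t,\cdot)\|_{L^1}$.
\begin{itemize}
\item \emph{Low zone.} It gives $\|w_1\|_{L^1}\int e^{-c|\xi|^{2\sigma}t}\,d\xi\lesssim(1+t)^{-\frac{n}{2\sigma}}\|w_1\|_{L^1}$.
\item \emph{Middle zone.} Being compact, it gives $e^{-ct}\|w_1\|_{L^1}$.
\item \emph{High zone.} By Cauchy--Schwarz,
\[
e^{-t/2}\int_{|\xi|\ge N}|\xi|^{-\sigma}|\hat w_1|\,d\xi\le e^{-t/2}\Big(\int_{|\xi|\ge N}|\xi|^{-2\sigma}d\xi\Big)^{1/2}\|w_1\|_{L^2}.
\]
The integral is finite exactly because $n<2\sigma$; this is where that hypothesis enters.
\end{itemize}
The assumption $\sigma\ge1$ is not needed beyond these symbol bounds.

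The only delicate point is the middle zone, where the roots coalesce and the naive bound on $\frac{e^{\lambda_+t}-e^{\lambda_-t}}{\lambda_+-\lambda_-}$ degenerates. The $\sinh$ representation handles this uniformly, provided $\varepsilon_0$ and $N$ are chosen so that the expansions used in the low and high zones are valid outside the middle zone. Summing the three zones in each step yields the three stated estimates with $\|w_1\|_{L^1\cap L^2}$ on the right-hand side.
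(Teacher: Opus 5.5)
Your argument is correct, but it is not the paper's route: the paper gives no proof of this proposition and imports it from the works it cites. Your proposal is therefore a self-contained replacement for that citation. It uses the standard splitting of the multiplier $\hat K_1(t,\xi)=e^{-t/2}\sinh(t\,d(\xi))/d(\xi)$ into low, middle and high frequencies, which is the usual mechanism behind such linear decay estimates.

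The direct computation makes several things transparent:
\begin{itemize}
\item only $\|w_1\|_{L^1\cap L^2}$ is ever used, so the parameter $m\in[1,2]$ in the statement plays no role;
\item $\sigma\ge 1$ is not needed;
\item $n<2\sigma$ enters only through the high-frequency part of the $L^\infty$ bound;
\item the bound $|\sinh z/z|\le e^{|\mathrm{Re}\, z|}$ handles the double root $|\xi|^{2\sigma}=1/4$ cleanly, without a separate case analysis near it.
\end{itemize}
The citation route instead places the estimates inside the general $L^m$--$L^q$ framework of the linear theory.

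As a side remark, once you have the two $L^2$ bounds you can also get the $L^\infty$ bound from the interpolation inequality
\[
\|w\|_{L^\infty}\lesssim\|w\|_{L^2}^{1-\frac{n}{2\sigma}}\,\||D|^\sigma w\|_{L^2}^{\frac{n}{2\sigma}}.
\]
To prove it, split $\|\hat w\|_{L^1}$ at $|\xi|=\rho$ and optimize in $\rho$. This again uses exactly $n<2\sigma$ and reproduces the rate $(1+t)^{-\frac{n}{2\sigma}}$.
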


\begin{proposition}[Fractional Gagliardo-Nirenberg inequality] \label{G-N}
Let $1<r, r_0, r_1<\infty, \rho>0$ and $s \in[0, 1)$. Then, it holds:
$$
\|w\|_{\dot{H}_r^s} \lesssim\|w\|_{L^{r_0}}^{1-\theta}\|w\|_{\dot{H}_{r_1}^\rho}^\theta,
$$
where $\theta=\theta_{s,\rho}\left(r, r_0, r_1\right)=\dfrac{\frac{1}{r_0}-\frac{1}{r}+\frac{s}{n}}{\frac{1}{r_0}-\frac{1}{r_1}+\frac{\rho}{n}}$ and $\dfrac{s}{\rho} \leq \theta \leq 1$.
\end{proposition}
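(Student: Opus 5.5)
The plan is to obtain the inequality as a composition of two standard facts: complex interpolation between $L^{r_0}$ and $\dot{H}^{\rho}_{r_1}$, followed by a Sobolev (Hardy–Littlewood–Sobolev) embedding between homogeneous Riesz potential spaces. By density it suffices to treat $w$ in the Schwartz class with $\widehat{w}$ vanishing near the origin, so that all the operators $|D|^{z}$ with $z\in\mathbb{C}$ act without problems.

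\emph{Step 1 (interpolation).} Set $\frac{1}{r_\theta}:=\frac{1-\theta}{r_0}+\frac{\theta}{r_1}$. I would prove
\[
\|w\|_{\dot{H}^{\theta\rho}_{r_\theta}}\lesssim \|w\|_{L^{r_0}}^{1-\theta}\|w\|_{\dot{H}^{\rho}_{r_1}}^{\theta}
\]
via Stein's analytic interpolation, using $F(z):=\int_{\mathbb{R}^n}|D|^{z\rho}w\,\overline{g_z}\,dx$. Here $g_z$ is the usual analytic family built from a simple function $g$ with $\|g\|_{L^{r_\theta'}}=1$, normalized so that $\|g_{it}\|_{L^{r_0'}}\le 1$ and $\|g_{1+it}\|_{L^{r_1'}}\le 1$. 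On the two boundary lines the key input is that the imaginary powers $|D|^{i\tau}$ are bounded on $L^{p}$ for $1<p<\infty$, with norm growing at most polynomially in $|\tau|$; this follows from Mikhlin's multiplier theorem. Hence
\[
|F(it)|\lesssim (1+|t|)^{N}\|w\|_{L^{r_0}},\qquad |F(1+it)|\lesssim (1+|t|)^{N}\|w\|_{\dot{H}^{\rho}_{r_1}}.
\]
Hirschman's version of the three lines lemma, which tolerates this admissible growth, then gives the bound at $z=\theta$. Taking the supremum over $g$ yields the claim. This is the step I expect to be the main technical obstacle: it requires controlling the growth in $\tau$ and working with homogeneous spaces, which are defined modulo polynomials. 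The Fourier support restriction above is exactly what removes the second issue. Alternatively, one could simply cite the identity $[L^{r_0},\dot{H}^{\rho}_{r_1}]_\theta=\dot{H}^{\theta\rho}_{r_\theta}$ from Bergh–Löfström or Triebel, together with the general interpolation inequality $\|w\|_{[X,Y]_\theta}\le\|w\|_X^{1-\theta}\|w\|_Y^\theta$.

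\emph{Step 2 (Sobolev embedding).} A direct computation from the definition of $\theta$ gives
\[
\theta\rho-s=n\Big(\frac{1}{r_\theta}-\frac{1}{r}\Big).
\]
Indeed, $\theta\big(\frac{1}{r_0}-\frac{1}{r_1}+\frac{\rho}{n}\big)=\frac{1}{r_0}-\frac{1}{r}+\frac{s}{n}$ rearranges to $\frac{\theta\rho-s}{n}=\frac{1}{r_\theta}-\frac{1}{r}$. The hypothesis $\theta\ge s/\rho$ means $\theta\rho-s\ge0$, so $1<r_\theta\le r<\infty$. If $\theta\rho=s$, then $r=r_\theta$ and the spaces coincide. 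Otherwise, writing $|D|^{s}w=I_{\theta\rho-s}\big(|D|^{\theta\rho}w\big)$ with the Riesz potential $I_\alpha=|D|^{-\alpha}$, the Hardy–Littlewood–Sobolev inequality gives
\[
\|w\|_{\dot{H}^{s}_{r}}\lesssim\|w\|_{\dot{H}^{\theta\rho}_{r_\theta}}.
\]
HLS needs $0<\theta\rho-s<n$; this holds because $\theta\rho-s=n(1/r_\theta-1/r)<n/r_\theta<n$.

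\emph{Step 3 (conclusion).} Chaining Steps 1 and 2 gives the stated estimate for the dense class, and a limiting argument extends it to general $w$ for which the right-hand side is finite. The restriction $s\in[0,1)$ plays no real role in this argument. It only reflects the form in which the inequality is applied later, namely to $L^{q^*}$ and $L^\infty$-type norms of $u$ and $v$ in the fixed point argument.
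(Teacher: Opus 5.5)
Your proof is correct. The paper itself gives no proof of Proposition~\ref{G-N}: it quotes it as a known tool, namely the standard fractional Gagliardo--Nirenberg inequality of Hajaiej, Molinet, Ozawa and Wang. So there is no internal argument to compare yours against. What you give is the standard sufficiency argument, interpolation followed by a Sobolev embedding, and it works.

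Step 1 gives $\|w\|_{\dot{H}^{\theta\rho}_{r_\theta}}\lesssim\|w\|_{L^{r_0}}^{1-\theta}\|w\|_{\dot{H}^{\rho}_{r_1}}^{\theta}$. You correctly derive the identity $\theta\rho-s=n(1/r_\theta-1/r)$ from the definition of $\theta$, and it turns the hypothesis $\theta\ge s/\rho$ into exactly what the Hardy--Littlewood--Sobolev step needs. Your checks of $1<r_\theta\le r<\infty$, of $0<\theta\rho-s<n$, and of the degenerate case $\theta\rho=s$ are all right. The Mikhlin bound $\||D|^{i\tau}\|_{L^p\to L^p}\lesssim(1+|\tau|)^N$ is exactly the growth the three-lines argument tolerates.

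The paper's bare citation is shorter. Your route makes the hypotheses transparent: only $1<r_0,r_1,r<\infty$, $\theta\le 1$ and $s\le\theta\rho$ are used, so the restriction $s\in[0,1)$ is indeed irrelevant.

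Two minor points. First, the reduction ``by density'' is the one place that needs care with homogeneous spaces. The cleanest way is to run Steps 1--2 on the Littlewood--Paley truncations $w_N=\sum_{|j|\le N}P_jw$. For these, $|D|^{z\rho}w_N$ is the convolution of $w$ with a Schwartz kernel, so $F$ is analytic with at most polynomial growth in the strip. Then let $N\to\infty$, using $w_N\to w$ in $L^{r_0}$ and $|D|^{\rho}w_N\to|D|^{\rho}w$ in $L^{r_1}$; both hold because $1<r_0,r_1<\infty$.

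Second, your closing aside is slightly inaccurate. The paper only applies the inequality with $s=0$, to Lebesgue norms such as $\|v\|_{L^{p^*}}$ and $\|v\|_{L^{2p^*}}$. Since $r<\infty$, it never yields $L^\infty$ bounds. This does not affect your argument.
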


\begin{lemma}[see \cite{AnhRei2021}] \label{integralmoduluslemma} 
Let $\mu_1=\mu_1(s)$ and $\mu_2=\mu_2(s)$ be moduli of continuity. Then, the following estimates hold:
\begin{align*}
\di &\int_0^t(1+t-\tau)^{-\alpha_1}(1+\tau)^{-1}\left(\mu_1\left(C(1+\tau)^{-\alpha_2}\right)\right)^{\beta_1}\left(\mu_2\left(C(1+\tau)^{-\alpha_2}\right)\right)^{\beta_2} d \tau\\
&\qquad\lesssim(1+t)^{-\alpha_1} \int_0^t(1+\tau)^{-1}\left(\mu_1\left(C(1+\tau)^{-\alpha_2}\right)\right)^{\beta_1}\left(\mu_2\left(C(1+\tau)^{-\alpha_2}\right)\right)^{\beta_2} d \tau
\end{align*}
for any $\alpha_1 \leq 1$ and for all $\alpha_2 ,\beta_1,\beta_2\geq 0$.
\end{lemma}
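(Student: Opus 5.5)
We prove the last statement, Lemma \ref{integralmoduluslemma}. Split the integral at $\tau=t/2$. Write the integrand as $(1+t-\tau)^{-\alpha_1}g(\tau)$ with
\[
g(\tau):=(1+\tau)^{-1}\left(\mu_1\left(C(1+\tau)^{-\alpha_2}\right)\right)^{\beta_1}\left(\mu_2\left(C(1+\tau)^{-\alpha_2}\right)\right)^{\beta_2}\ge 0 .
\]
Set $G(t):=\int_0^t g(\tau)\,d\tau$; the goal is to bound each piece by $(1+t)^{-\alpha_1}G(t)$.

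\emph{The piece over $[0,t/2]$.} Here $1+t-\tau\sim 1+t$, uniformly in $\tau$. This holds for every real $\alpha_1$, since both $(1+t)/2\le 1+t-\tau\le 1+t$ hold. Hence
\[
\int_0^{t/2}(1+t-\tau)^{-\alpha_1}g(\tau)\,d\tau\lesssim (1+t)^{-\alpha_1}G(t/2)\le (1+t)^{-\alpha_1}G(t),
\]
using $g\ge0$.

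\emph{The piece over $[t/2,t]$.} Here $1+\tau\sim 1+t$. Since $\mu_j$ is increasing, $\alpha_2\ge0$ and $\beta_j\ge0$, the factors $\mu_j(C(1+\tau)^{-\alpha_2})^{\beta_j}$ are nonincreasing in $\tau$. Therefore $g(\tau)\lesssim (1+t)^{-1}M(t/2)$ on $[t/2,t]$, where $M(s):=\mu_1(C(1+s)^{-\alpha_2})^{\beta_1}\mu_2(C(1+s)^{-\alpha_2})^{\beta_2}$. It remains to estimate $\int_{t/2}^t(1+t-\tau)^{-\alpha_1}d\tau$.
\begin{itemize}
\item For $\alpha_1<1$ this integral is $\lesssim (1+t)^{1-\alpha_1}$, so the piece is $\lesssim (1+t)^{-\alpha_1}M(t/2)$.
\item For $\alpha_1=1$ it produces a factor $\log(1+t)$, so the piece is $\lesssim (1+t)^{-1}\log(1+t)\,M(t/2)$.
\end{itemize}

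To finish, compare $M(t/2)$ with $G(t)$, again using monotonicity of $M$. For $t\ge1$,
\[
G(t)\ge\int_0^{t/2}(1+\tau)^{-1}M(\tau)\,d\tau\ge M(t/2)\int_0^{t/2}(1+\tau)^{-1}d\tau=M(t/2)\log(1+t/2)\gtrsim M(t/2)\log(1+t).
\]
This bound gives $M(t/2)\lesssim G(t)$ when $\alpha_1<1$, and $\log(1+t)M(t/2)\lesssim G(t)$ when $\alpha_1=1$. In both cases the second piece is $\lesssim(1+t)^{-\alpha_1}G(t)$.

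For $t\le 1$ all weights are comparable to constants. Both sides are then $\sim\int_0^t M(\tau)\,d\tau$, up to constants coming from the harmless factor $(1+t-\tau)^{-\alpha_1}\sim1$. So the inequality is immediate on bounded time intervals.

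The main obstacle is the borderline case $\alpha_1=1$, where the logarithm from $\int_{t/2}^t(1+t-\tau)^{-1}d\tau$ must be absorbed. The plan above handles it by the lower bound $G(t)\gtrsim M(t/2)\log(1+t)$, which uses only that the moduli are increasing. If $\alpha_1>1$ were allowed, the near-$t$ part would fail, which explains the restriction $\alpha_1\le1$.
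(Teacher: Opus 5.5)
Your proof is correct and complete. You split at $\tau=t/2$ and use $1+t-\tau\sim 1+t$ on $[0,t/2]$. On $[t/2,t]$ you bound $(1+\tau)^{-1}M(\tau)\lesssim (1+t)^{-1}M(t/2)$ by monotonicity. The resulting factor ($1$ for $\alpha_1<1$ when $t\ge 1$, and $\log(1+t)$ for $\alpha_1=1$) is then absorbed via $G(t)\ge M(t/2)\log(1+t/2)$, and the case $t\le 1$ is trivial. This uses only that the $\mu_j$ are increasing, not concavity or \eqref{con1thr1.1}.

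The paper itself gives no proof of this lemma and only cites \cite{AnhRei2021}, so there is no in-paper argument to compare against. Your splitting-plus-monotonicity argument is a self-contained replacement for that citation.
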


\begin{lemma}\label{ell1ell2}
    Let $k>0$ and $\ell_1(t),\ell_2(t)$ are defined as in the statement of Theorem \ref{thr1.1}. Then, the following properties hold:
    \begin{itemize}
        \item[\rm i.] $(1+t)^{k}\ell_1(t)$ and $(1+t)^{k}\ell_2(t)$ are increasing functions for $t>0$,
        \item[\rm ii.] $(1+t)^{-k}\ell_1(t)\leq (1+t)^{-\gamma}$ and $(1+t)^{-k}\ell_2(t)\leq (1+t)^{-\gamma}$ for some constant $\gamma\in (0,k)$.
    \end{itemize}

\end{lemma}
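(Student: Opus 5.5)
It suffices to treat $\ell_1$; the argument for $\ell_2$ is symmetric, with the roles of $\mu_1,\mu_2$ and $q^*,p^*$ exchanged. When $\ell_1\equiv 1$ both claims are immediate: $(1+t)^k$ is increasing, and ii) holds with any $\gamma\in(0,k)$. So assume
\[
\ell_1(t)=\Big(\mu_1(c(1+t)^{-\gamma})/\mu_2(c(1+t)^{-\gamma})\Big)^{\frac{1}{q^*+1}}.
\]
Write $s(t)=c(1+t)^{-\gamma}$, so that $s'(t)=-\gamma s(t)/(1+t)$. The key input is assumption \eqref{con1thr1.1}: there is $C_0>0$ with $s|\mu_j'(s)|\le C_0\mu_j(s)$ for $s\in(0,c]$, $j=1,2$.

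\textbf{Step i): monotonicity.} Take the logarithmic derivative:
\[
\frac{d}{dt}\log\big((1+t)^k\ell_1(t)\big)=\frac{k}{1+t}-\frac{\gamma}{(q^*+1)(1+t)}\left(\frac{s\mu_1'(s)}{\mu_1(s)}-\frac{s\mu_2'(s)}{\mu_2(s)}\right)\Big|_{s=s(t)}.
\]
The bracket is bounded in absolute value by $2C_0$. Hence the derivative is at least
\[
\frac{1}{1+t}\Big(k-\frac{2C_0\gamma}{q^*+1}\Big),
\]
which is positive provided $\gamma$ is small, say $\gamma< k(q^*+1)/(2C_0)$. This is where the phrase "sufficiently small constant $\gamma$" in the theorem is used.

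\textbf{Step ii): the decay bound.} The same computation gives a two-sided bound
\[
\Big|\frac{d}{dt}\log \ell_1(t)\Big|\le \frac{\kappa}{1+t},\qquad \kappa:=\frac{2C_0\gamma}{q^*+1}.
\]
Integrating from $0$ to $t$ yields $\ell_1(t)\le \ell_1(0)(1+t)^{\kappa}$. Therefore
\[
(1+t)^{-k}\ell_1(t)\le \ell_1(0)(1+t)^{-(k-\kappa)}.
\]
Choosing $\gamma$ small so that $\kappa<k$, the exponent $k-\kappa$ lies in $(0,k)$.

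\textbf{Remaining bookkeeping.} Two points need care, and this is the main obstacle. First, the constant $\ell_1(0)=(\mu_1(c)/\mu_2(c))^{1/(q^*+1)}$ should be absorbed. One option is to normalize so that $\ell_1(0)\le 1$, for instance by rescaling $\mu_1,\mu_2$ by constants, which the $\mathcal{O}$-relations allow. The other option is to accept "$\le$" up to a multiplicative constant, i.e.\ $\lesssim$, which is all that is used later.

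Second, the exponent appearing in ii) is $k-\kappa$, which depends on the smallness parameter in the definition of $\ell_1$. The statement reuses the letter $\gamma$ for this decay exponent, so the constant in ii) should be read as a possibly different number in $(0,k)$. Note that $k-\kappa$ automatically exceeds $0$ once $\kappa<k$, so no further smallness is needed for it to be positive.
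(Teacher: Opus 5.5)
Your proof is correct and is the ``direct calculation with \eqref{con1thr1.1}'' that the paper omits: the logarithmic derivative of $\ell_1$ is bounded by $2C_0\gamma/((q^*+1)(1+t))$, which yields i) and ii) once $\gamma$ is small relative to $k$. You are also right that ii) can only hold up to the factor $\ell_1(0)$, and the $\lesssim$ reading is the one to keep, since rescaling $\mu_1,\mu_2$ would change the nonlinearities.

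One small sharpening: shrinking $\gamma$ further so that $\gamma\bigl(1+\tfrac{2C_0}{q^*+1}\bigr)\le k$ gives ii) with the very same $\gamma$ that defines $\ell_1$. That is the form the paper actually needs in \eqref{eq14thr1.1} and \eqref{eq16thr1.1}, where the powers of $\ell_1$ must combine with $\mu_j\bigl(c(1+\tau)^{-\gamma}\bigr)$ at the same argument.
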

\begin{proof}
    The proof of this lemma follows from direct calculations together with the condition \eqref{con1thr1.1}, hence, we may omit its details.
\end{proof}

\subsection{Proof of Theorem 1.1}
We introduce the solution space
$$
X(t):= \mathcal{C}([0, t],\dot{H}^{\sigma}\cap L^{q^*}\cap L^{\infty})\times \mathcal{C}([0,t],H^{\sigma})
$$
with the norm
$$
\begin{aligned}
&\|(u, v)\|_{X(t)}:=\sup _{0 \leq \tau \leq t}\left((1+\tau)^{\frac{n}{\sigma}\tron{1-\frac{1}{q^*}}-1-[\varepsilon_2(q^*)]^{+}}\big(\ell_2(\tau)\big)^{-1}\|u(\tau, \cdot)\|_{L^{q^*}}\right.\\
&\qquad\left.+(1+\tau)^{\frac{n}{\sigma}-1-[\varepsilon_2(q^*)]^{+}}\big(\ell_2(\tau)\big)^{-1}\|u(\tau, \cdot)\|_{L^\infty}+(1+\tau)^{\frac{n}{4\sigma}-[\varepsilon_2(q^*)]^{+}}\big(\ell_2(\tau)\big)^{-1}\|u(\tau, \cdot)\|_{\dot{H}^\sigma}\right.\\
&\qquad\left.+(1+\tau)^{\frac{n}{4\sigma}-[\varepsilon_1(p^*)]^{+}}\big(\ell_1(\tau)\big)^{-1}\|v(\tau, \cdot)\|_{L^2}+(1+\tau)^{\frac{n}{4\sigma}+\frac{1}{2}-[\varepsilon_1(p^*)]^{+}}\big(\ell_1(\tau)\big)^{-1}\|v(\tau, \cdot)\|_{\dot{H}^\sigma}\right.\\
&\qquad\left.+(1+\tau)^{\frac{n}{2\sigma}-[\varepsilon_1(p^*)]^{+}}\big(\ell_1(\tau)\big)^{-1}\|v(\tau,\cdot)\|_{L^{\infty}}\right),
\end{aligned}
$$
where the parameter $\varepsilon_1(p^*),\varepsilon_2(q^*)$ and the weight functions $\ell_1(\tau), \ell_2(\tau)$ are determined as in \eqref{con4thr1.1}, \eqref{con5thr1.1} and \eqref{con6thr1.1}. From Propositions \ref{Linear estimates hyp} and \ref{Linear estimate par}, it suffices to prove the following inequality instead of \eqref{eqbanachfixedpoint1}:
\begin{equation}\label{eq1thr1.1}
\big\|\big(u^{\mathrm{nl}}, v^{\mathrm{nl}}\big)\big\|_{X(t)} \lesssim\|(u, v)\|_{X(t)}^{p^*}+\|(u, v)\|_{X(t)}^{q^*} .
\end{equation}
For $k\in\{q^*,\infty\}$, using Proposition \ref{Linear estimates hyp} we get
\begin{align}
\big\|u^{\mathrm{nl}}(t,\cdot)\big\|_{L^k}&\lesssim \int_0^t(1+t-\tau)^{-\frac{n}{\sigma}\left(1-\frac{1}{k}\right)+1}\big\||v(\tau, \cdot)|^{p^*} \mu_1\big(|v(\tau, \cdot)|\big)\big\|_{L^1\cap L^2} d \tau,\label{eq2thr1.1}\\
\big\|u^{\mathrm{nl}}(t,\cdot)\big\|_{\dot{H}^{\sigma}} &\lesssim \int_0^t(1+t-\tau)^{-\frac{n}{4\sigma}}\big\||v(\tau, \cdot)|^{p^*} \mu_1\big(|v(\tau, \cdot)|\big)\big\|_{L^1\cap L^2} d \tau.\label{eq3thr1.1}
\end{align}
For $k\in\{2,\infty\}$, applying Proposition \ref{Linear estimates hyp} again one obtains
\begin{align}
\big\|v^{\mathrm{nl}}(t,\cdot)\big\|_{L^k} &\lesssim \int_0^t(1+t-\tau)^{-\frac{n}{2\sigma}\tron{1-\frac{1}{k}}}\big\||u(\tau, \cdot)|^{q^*} \mu_2\big(|u(\tau, \cdot)|\big)\big\|_{L^1\cap L^2} d \tau,\label{eq4thr1.1}\\ 
\big\|v^{\mathrm{nl}}(t,\cdot)\big\|_{\dot{H}^{\sigma}} &\lesssim \int_0^t(1+t-\tau)^{-\frac{n}{4\sigma}-\frac{1}{2}}\big\||u(\tau, \cdot)|^{q^*} \mu_2\big(|u(\tau, \cdot)|\big)\big\|_{L^1\cap L^2} d \tau.\label{eq5thr1.1}
\end{align}
In order to estimate $|v(\tau, \cdot)|^{p^*} \mu_1\big(|v(\tau, \cdot)|\big)$ and $|u(\tau, \cdot)|^{q^*} \mu_2\big(|u(\tau, \cdot)|\big)$ in $L^1 \cap L^2$-norm, we proceed as follows:
\begin{align*}
\big\||v(\tau, \cdot)|^{p^*} \mu_1\big(|v(\tau, \cdot)|\big)\big\|_{L^1\cap L^2} \lesssim \big\||v(\tau, \cdot)|^{p^*}\big\|_{L^1\cap L^2}\big\|\mu_1\big(|v(\tau, \cdot)|\big)\big\|_{L^{\infty}},\\
\big\||u(\tau, \cdot)|^{q^*} \mu_2\big(|u(\tau, \cdot)|\big)\big\|_{L^1\cap L^2} \lesssim\big\||u(\tau, \cdot)|^{q^*}\big\|_{L^1\cap L^2}\big\|\mu_2\big(|u(\tau, \cdot)|\big)\big\|_{L^{\infty}}.
\end{align*}
Since $\mu_1$ is an increasing function, we have
$$
\begin{aligned}
\big\|\mu_1\big(|v(\tau, \cdot)|\big)\big\|_{L^{\infty}} \leq \mu_1\big(\|v(\tau, \cdot)\|_{L^{\infty}}\big) \leq \mu_1\left(C_1(1+\tau)^{-\frac{n}{2\sigma}+[\varepsilon_1(p^*)]^{+}}\ell_1(\tau)\|(u,v)\|_{X(t)}\right).
\end{aligned}
$$
By taking $\|(u,v)\|_{X(t)} \leq \varepsilon_0$, where $\varepsilon_0$ is a sufficiently small constant, it follows that
$$
\big\|\mu_1\big(|v(\tau, \cdot)|\big)\big\|_{L^{\infty}} \leq \mu_1\left(C_1 \varepsilon_0(1+\tau)^{-\frac{n}{2\sigma}+[\varepsilon_1(p^*)]^{+}}\ell_1(\tau)\right),
$$
and similarly,
\[
\big\|\mu_2\big(|u(\tau, \cdot)|\big)\big\|_{L^{\infty}} \leq\mu_2\left(C_1 \varepsilon_0(1+\tau)^{-\frac{n}{\sigma}+1+[\varepsilon_2(q^*)]^{+}}\ell_2(\tau)\right).
\]
On the other hand, one finds
$$
\begin{aligned}
\big\||v(\tau, \cdot)|^{p^*}\big\|_{L^1 \cap L^2}=\big\||v(\tau, \cdot)|^{p^*}\big\|_{L^1}+\big\||v(\tau, \cdot)|^{p^*}\big\|_{L^2}=\big\|v(\tau, \cdot)\big\|_{L^{p^*}}^{p^*}+\big\|v(\tau, \cdot)\big\|_{L^{2 p^*}}^{p^*},\\
\big\||u(\tau, \cdot)|^{q^*}\big\|_{L^1 \cap L^2}=\big\||u(\tau, \cdot)|^{q^*}\big\|_{L^1}+\big\||u(\tau, \cdot)|^{q^*}\big\|_{L^2}=\big\|u(\tau, \cdot)\big\|_{L^{q^*}}^{q^*}+\big\|u(\tau, \cdot)\big\|_{L^{2 q^*}}^{q^*}.
\end{aligned}
$$
Employing the fractional Gagliardo-Nirenberg inequality from Proposition \ref{G-N} gives
$$
\begin{aligned}
\big\|v(\tau, \cdot)\big\|_{L^{p^*}}^{p^*} & \lesssim (1+\tau)^{-\frac{n}{2 \sigma}\left(p^*-1\right)+[\varepsilon_2(q^*)]^{+}p^*}\big(\ell_2(\tau)\big)^{p^*}\|(u,v)\|_{X(t)}^{p^*}, \\
\big\|v(\tau, \cdot)\big\|_{L^{2 p^*}}^{p^*} & \lesssim(1+\tau)^{-\frac{n}{2\sigma}\left(p^*-\frac{1}{2}\right)+[\varepsilon_2(q^*)]^{+}p^*}\big(\ell_2(\tau)\big)^{p^*}\|(u,v)\|_{X(t)}^{p^*},\\
\big\|u(\tau, \cdot)\big\|_{L^{q^*}}^{q^*} & \lesssim(1+\tau)^{q^*-\frac{n}{\sigma}\left(q^*-1\right)+[\varepsilon_1(p^*)]^+q^*}\big(\ell_1(\tau)\big)^{q^*}\|(u,v)\|_{X(t)}^{q^*}, \\
\big\|u(\tau, \cdot)\big\|_{L^{2q^*}}^{q^*} & \lesssim(1+\tau)^{q^*-\frac{n}{\sigma}\left(q^*-\frac{1}{2}\right)+[\varepsilon_1(p^*)]^{+}q^*}\big(\ell_1(\tau)\big)^{q^*}\|(u,v)\|_{X(t)}^{q^*}.
\end{aligned}
$$
As a result, we derive the following estimates:
\begin{align}
&\big\||v(\tau, \cdot)|^{p^*} \mu_1\big(|v(\tau, \cdot)|\big)\big\|_{L^1\cap L^2}\nonumber\\
&\quad\lesssim (1+\tau)^{-\frac{n}{2 \sigma}\left(p^*-1\right)+[\varepsilon_2(q^*)]^{+}p^*}\big(\ell_2(\tau)\big)^{p^*}\mu_1\tron{c(1+\tau)^{-\frac{n}{2\sigma}+[\varepsilon_1(p^*)]^{+}}\ell_1(\tau)}\|(u,v)\|^{p^*}_{X(t)},\label{eq6thr1.1}\\
&\big\||u(\tau, \cdot)|^{q^*} \mu_2\big(|u(\tau, \cdot)|\big)\big\|_{L^1 \cap L^2}\nonumber\\
&\lesssim (1+\tau)^{q^*-\frac{n}{\sigma}\left(q^*-1\right)+[\varepsilon_1(p^*)]^+q^*}\big(\ell_1(\tau)\big)^{q^*}\mu_2\tron{c(1+\tau)^{-\frac{n}{\sigma}+1+[\varepsilon_2(q^*)]^{+}}\ell_2(\tau)}\|(u,v)\|^{q^*}_{X(t)}.\label{eq7thr1.1}
\end{align}
{\bf At first, let us consider the case $p^*\leq p_{\rm c}\text{ and }q^*\geq q_{\rm c}$}. In this case, the critical curve \eqref{critcurve} becomes
\begin{equation}\label{critcurvecase1}
    \frac{2q^*+1}{p^*q^*+q^*-2}=\frac{n}{2\sigma}.
\end{equation}
Assuming \eqref{con2thr1.1}, we set $\ell_1(\tau) \equiv \ell_2(\tau) \equiv 1$, $\varepsilon_1(p^*)= 1 - (n/2\sigma)(p^*-1)$ and $\varepsilon_2(q^*) \equiv 0$. Consequently, the following relations hold:
\[
\begin{aligned}
-\frac{n}{2\sigma}\left(p^*-1\right)=\varepsilon_1(p^*)-1,\quad q^*-\frac{n}{\sigma}\left(q^*-1\right)+q^*\varepsilon_1(p^*)=-1.
\end{aligned}
\]
After applying the above equalities to \eqref{eq6thr1.1} and \eqref{eq7thr1.1}, we arrive at
\begin{align}
\big\||v(\tau, \cdot)|^{p^*} \mu_1\big(|v(\tau, \cdot)|\big)\big\|_{L^1\cap L^2}&\lesssim (1+\tau)^{-1+\varepsilon_1(p^*)}\mu_1\tron{c(1+\tau)^{-\frac{n}{2\sigma}}}\|(u,v)\|^{p^*}_{X(t)},\label{eq8thr1.1}\\
\big\||u(\tau, \cdot)|^{q^*} \mu_2\big(|u(\tau, \cdot)|\big)\big\|_{L^1 \cap L^2}&\lesssim (1+\tau)^{-1}\mu_2\tron{c(1+\tau)^{-\frac{n}{\sigma}+1+\varepsilon_1(p^*)}}\|(u,v)\|^{q^*}_{X(t)}\label{eq9thr1.1}.
\end{align}
Based on these observations together with \eqref{eq3thr1.1} and \eqref{eq8thr1.1}, we obtain the following chain of inequalities:
\begin{align}
\big\||D|^{\sigma} u^{\mathrm{nl}}(t, \cdot)\big\|_{L^2} 
&\lesssim (1+t)^{-\frac{n}{4\sigma}+\varepsilon_1(p^*)} \|(u, v)\|_{X(t)}^{p^*} \int_0^{t} (1+\tau)^{-1} \mu_1\left(c(1+\tau)^{-\frac{n}{2\sigma}}\right) d \tau\nonumber \\
&\lesssim (1+t)^{-\frac{n}{4\sigma}+\varepsilon_1(p^*)} \|(u, v)\|_{X(t)}^{p^*} \int_0^c \frac{\mu_1(s)}{s} ds\nonumber \\
&\lesssim (1+t)^{-\frac{n}{4\sigma}+\varepsilon_1(p^*)} \|(u, v)\|_{X(t)}^{p^*},\label{eq10thr1.1}
\end{align}
where we have employed Lemma \ref{integralmoduluslemma} with $\alpha_1=n/4\sigma$, $\alpha_2=n/2\sigma$, $\beta_1=1$ and $\beta_2=0$ and the assumption \eqref{con2thr1.1}. For $k \in \{q^*, \infty\}$, combining \eqref{eq2thr1.1} and \eqref{eq8thr1.1}, then applying Lemma \ref{integralmoduluslemma} with
\[
\alpha_1 = \frac{n}{\sigma}\tron{1-\frac{1}{k}}-1,\quad  \alpha_2 = \frac{n}{2\sigma},\quad \beta_1 = 1,\quad \beta_2 = 0,
\]
we obtain
\begin{align}
\big\|u^{\mathrm{nl}}(t, \cdot)\big\|_{L^k} 
&\lesssim (1+t)^{\varepsilon_1(p^*)} \|(u, v)\|_{X(t)}^{p^*} \int_0^t (1+t-\tau)^{1-\frac{n}{\sigma}\left(1-\frac{1}{k}\right)} (1+\tau)^{-1} \mu_1\left(c(1+\tau)^{-\frac{n}{2\sigma}}\right) d\tau \nonumber\\
&\lesssim (1+t)^{1-\frac{n}{\sigma}\left(1-\frac{1}{k}\right)+\varepsilon_1(p^*)} \|(u, v)\|_{X(t)}^{p^*} \int_0^{\infty} (1+\tau)^{-1} \mu_1\left(c(1+\tau)^{-\frac{n}{2\sigma}}\right) d\tau \nonumber\\
&\lesssim (1+t)^{1-\frac{n}{\sigma}\left(1-\frac{1}{k}\right)+\varepsilon_1(p^*)} \|(u, v)\|_{X(t)}^{p^*}.\label{eq11thr1.1}
\end{align}
By the condition \eqref{critcurvecase1}, we deduce that
\[
p^*=\frac{2\sigma}{n}\tron{2+\frac{1}{q}}+\frac{2}{q}-1>\frac{4\sigma}{n}-1,
\]
which implies
\begin{equation}\label{eq_power_negative_thr1.1}
-\frac{n}{\sigma}+1+\varepsilon_1(p^*)<0.
\end{equation}
Combining \eqref{eq5thr1.1}, \eqref{eq9thr1.1} and \eqref{eq_power_negative_thr1.1}, one may estimate
\begin{align}
\big\||D|^{\sigma}v^{\mathrm{nl}}(t, \cdot)\big\|_{L^2} \lesssim & \|(u, v)\|_{X(t)}^{p^*}\int_0^t(1+t-\tau)^{-\frac{n}{4\sigma}-\frac{1}{2}}(1+\tau)^{-1} \mu_2\left(c(1+\tau)^{-\frac{n}{\sigma}+1+\varepsilon_1(p^*)}\right) d \tau\nonumber \\
\lesssim &(1+t)^{-\frac{n}{4\sigma}-\frac{1}{2}}\|(u, v)\|_{X(t)}^{p^*},\label{eq12thr1.1}
\end{align}
where we used Lemma \ref{integralmoduluslemma} with $\beta_1=0, \beta_2=1, \alpha=n/\sigma-1-\varepsilon_1(p^*)$. Analogously, one can prove the following estimate for $k\in\{2,\infty\}$:
\begin{align}
\big\|v^{\mathrm{nl}}(t, \cdot)\big\|_{L^k} \lesssim & \|(u, v)\|_{X(t)}^{p^*}\int_0^t(1+t-\tau)^{-\frac{n}{2\sigma}\tron{1-\frac{1}{k}}}(1+\tau)^{-1} \mu_2\left(c(1+\tau)^{-\frac{n}{\sigma}+1+\varepsilon_1(p^*)}\right) d \tau\nonumber \\
\lesssim &(1+t)^{-\frac{n}{2\sigma}\tron{1-\frac{1}{k}}}\|(u, v)\|_{X(t)}^{p^*}.\label{eq13thr1.1}
\end{align}
\noindent If \eqref{con2thr1.1} does not hold, then we assume \eqref{con3athr1.1}. So, we take
$$
\ell_1(\tau)=\left(\frac{\mu_1\left(c(1+\tau)^{-\gamma}\right)}{\mu_2\left(c(1+\tau)^{-\gamma}\right)}\right)^{\frac{1}{q^*+1}},\quad \ell_2(t)\equiv 1,\quad \varepsilon_1(p^*)=1-\frac{n}{2\sigma}(p^*-1),\quad \varepsilon_2(q^*)\equiv 0.
$$
By choosing $0<\gamma<n/\sigma-1-\varepsilon_1(p^*)$, we deduce from \eqref{eq3thr1.1} and \eqref{eq8thr1.1}, together with Lemma \ref{ell1ell2}, that
\begin{align}
& \big\||D|^{\sigma}u^{\mathrm{nl}}(t, \cdot)\big\|_{L^2}\nonumber\\
&\quad \lesssim\|(u, v)\|_{X(t)}^{p^*} \int_0^t(1+t-\tau)^{-\frac{n}{4\sigma}}(1+\tau)^{-1+\varepsilon_1(p^*)} \ell_1(\tau)\big(\ell_1(\tau)\big)^{-1} \mu_1\left(c(1+\tau)^{-\gamma}\right) d \tau\nonumber\\
&\quad \lesssim(1+t)^{\varepsilon_1(p^*)} \ell_1(t)\|(u, v)\|_{X(t)}^{p^*} \nonumber\\
&\qquad \times \int_0^t(1+t-\tau)^{-\frac{n}{4\sigma}}(1+\tau)^{-1}\big(\mu_1\left(c(1+\tau)^{-\gamma}\right)\big)^{\frac{q^*}{q^*+1}}\big(\mu_2\left(c(1+\tau)^{-\gamma}\right)\big)^{\frac{1}{q^*+1}} d \tau\nonumber\\
&\quad\lesssim(1+t)^{-\frac{n}{4\sigma}+\varepsilon_1(p^*)}\ell_1(t)\|(u, v)\|_{X(t)}^{p^*}\nonumber\\
&\qquad \times\int_0^t(1+\tau)^{-1}\left(\mu_1\left(c(1+\tau)^{-\gamma}\right)\right)^{\frac{q^*}{q^*+1}}\left(\mu_2\left(c(1+\tau)^{-\gamma}\right)\right)^{\frac{1}{q^*+1}} d \tau\nonumber\\
&\quad\lesssim (1+t)^{-\frac{n}{4\sigma}+\varepsilon_1(p^*)} \ell_1(t)\|(u, v)\|_{X(t)}^{p^*} \int_0^c \frac{1}{s}\big(\mu_1(s)\big)^{\frac{q^*}{q^*+1}}\big(\mu_2(s)\big)^{\frac{1}{q^*+1}} ds\nonumber\\
&\quad\lesssim(1+t)^{-\frac{n}{4\sigma}+\varepsilon_1(p^*)}\ell_1(t)\|(u, v)\|_{X(t)}^{p^*},\label{eq14thr1.1}
\end{align}
where we have employed Lemma \ref{integralmoduluslemma} with
\[
\alpha_1 = \frac{n}{4\sigma}, \quad \alpha_2 = \gamma, \quad \beta_1 = \frac{q^*}{q^*+1}, \quad \beta_2 = \frac{1}{q^*+1}.
\]
Similarly, we have the following estimates for $k\in \{q^*,\infty\}$:
\begin{align}
&\big\|u^{\mathrm{nl}}(t, \cdot)\big\|_{L^k}\nonumber\\
&\quad \lesssim\|(u, v)\|_{X(t)}^{p^*} \int_0^t(1+t-\tau)^{1-\frac{n}{\sigma}\tron{1-\frac{1}{k}}}(1+\tau)^{-1+\varepsilon_1(p^*)} \ell_1(\tau)\big(\ell_1(\tau)\big)^{-1} \mu_1\left(c(1+\tau)^{-\gamma}\right) d \tau\nonumber\\
&\qquad \lesssim(1+t)^{\varepsilon_1(p^*)} \ell_1(t)\|(u, v)\|_{X(t)}^{p^*} \nonumber\\
&\qquad \times \int_0^t(1+t-\tau)^{1-\frac{n}{\sigma}\tron{1-\frac{1}{k}}}(1+\tau)^{-1}\left(\mu_1\left(c(1+\tau)^{-\gamma}\right)\right)^{\frac{q^*}{q^*+1}}\left(\mu_2\left(c(1+\tau)^{-\gamma}\right)\right)^{\frac{1}{q^*+1}} d \tau\nonumber\\
&\quad \lesssim(1+t)^{1-\frac{n}{\sigma}\tron{1-\frac{1}{k}}+\varepsilon_1(p^*)}\ell_1(t)\|(u, v)\|_{X(t)}^{p^*}.\label{eq15thr1.1}
\end{align}
To estimate the term containing $v^{\mathrm{nl}}(t,\cdot)$, we can proceed as follows:
\begin{align}
&\big\||D|^{\sigma}v^{\mathrm{nl}}(t, \cdot)\big\|_{L^2}\lesssim\|(u, v)\|_{X(t)}^{q^*} \int_0^t(1+t-\tau)^{-\frac{n}{4\sigma}-\frac{1}{2}}(1+\tau)^{-1} \big(\ell_1(\tau)\big)^{q^*} \mu_2\left(c(1+\tau)^{-\gamma}\right) d\tau \nonumber\\
& \lesssim\|(u, v)\|_{X(t)}^{q^*}\int_0^t(1+t-\tau)^{-\frac{n}{4\sigma}-\frac{1}{2}}(1+\tau)^{-1}\big(\mu_1\left(c(1+\tau)^{-\gamma}\right)\big)^{\frac{q^*}{q^*+1}}\big(\mu_2\left(c(1+\tau)^{-\gamma}\right)\big)^{\frac{1}{q^*+1}} d \tau\nonumber\\
&\lesssim(1+t)^{-\frac{n}{4\sigma}-\frac{1}{2}}\|(u, v)\|_{X(t)}^{p^*}, \label{eq16thr1.1}
\end{align}
analogously, 
\begin{equation}\label{eq17thr1.1}
\big\|v^{\mathrm{nl}}(t,\cdot)\big\|_{L^k}\lesssim (1+t)^{-\frac{n}{2\sigma}\tron{1-\frac{1}{k}}}\|(u,v)\|^{p^*}_{X(t)} \quad \text{ for }k\in\{2,\infty\}.
\end{equation}
Summarizing, all estimates \eqref{eq10thr1.1}-\eqref{eq17thr1.1} are to verify \eqref{eq1thr1.1} in the case $p^*\leq p_{\rm c}$ and $q^*\geq q_{\rm c}$. \medskip

{\bf Next, let us consider the case $p^*>p_{\rm c}\text{ and }q^*<q_{\rm c}$}. Then, the critical curve \eqref{critcurve} becomes
\begin{equation}\label{critcurvecase2}
    \frac{p^*q^*+p^*+1}{2p^*q^*-p^*-1}=\frac{n}{2\sigma}.
\end{equation}
\noindent If the condition \eqref{con2thr1.1} holds, we choose $\ell_1(t)\equiv \ell_2(t)\equiv 1$, $\varepsilon_1(p^*)\equiv 0$, $\varepsilon_2(q^*)=1+q^*-(n/\sigma)(q^*-1)$. Then, one recognizes
\begin{equation}
-\frac{n}{2\sigma}\left(p^*-1\right)+p^*\varepsilon_2(q^*)=-1, \quad q^*-\frac{n}{\sigma}\left(q^*-1\right)=-1+\varepsilon_2(q^*).
\end{equation}
Therefore, we deduce from \eqref{eq6thr1.1} and \eqref{eq7thr1.1} that
\begin{align}
\big\||v(\tau, \cdot)|^{p^*} \mu_1\big(|v(\tau, \cdot)|\big)\big\|_{L^1\cap L^2}&\lesssim (1+\tau)^{-1}\mu_1\tron{c(1+\tau)^{-\frac{n}{2\sigma}}}\|(u,v)\|^{p^*}_{X(t)},\label{eq18thr1.1}\\
\big\||u(\tau, \cdot)|^{q^*} \mu_2\big(|u(\tau, \cdot)|\big)\big\|_{L^1 \cap L^2}&\lesssim (1+\tau)^{-1+\varepsilon_2(q^*)}\mu_2\tron{c(1+\tau)^{-\frac{n}{\sigma}+1}}\|(u,v)\|^{q^*}_{X(t)}.\label{eq19thr1.1}
\end{align}
After repeating some steps as we did in the previous case, we derive the inequality \eqref{eq1thr1.1}. If \eqref{con2thr1.1} does not hold, then we assume \eqref{con3athr1.1}. In this way, we set
$$
\ell_1(\tau)\equiv 1,\quad  \ell_2(\tau)=\left(\frac{\mu_2\left(c(1+\tau)^{-\gamma}\right)}{\mu_1\left(c(1+\tau)^{-\gamma}\right)}\right)^{\frac{1}{p^*+1}},\quad \varepsilon_1(p^*)\equiv 0,\quad  \varepsilon_2(q^*)=1+q^*-\frac{n}{\sigma}(q^*-1),
$$
where $\gamma$ is chosen to satisfy $0 < \gamma < n/\sigma-1$. Following the same strategy as in the previous case, we may conclude that
\begin{align}
& \big\||D|^{\sigma}u^{\mathrm{nl}}(t, \cdot)\big\|_{L^2}\lesssim\|(u, v)\|_{X(t)}^{p^*} \int_0^t(1+t-\tau)^{-\frac{n}{4\sigma}}(1+\tau)^{-1} \big(\ell_2(\tau)\big)^{p^*}\mu_1\left(c(1+\tau)^{-\gamma}\right) d \tau\nonumber\\
& \lesssim\|(u, v)\|_{X(t)}^{p^*}\int_0^t(1+t-\tau)^{-\frac{n}{4\sigma}}(1+\tau)^{-1}\left(\mu_1\left(c(1+\tau)^{-\gamma}\right)\right)^{\frac{1}{p^*+1}}\left(\mu_2\left(c(1+\tau)^{-\gamma}\right)\right)^{\frac{p^*}{p^*+1}} d \tau\nonumber\\
&\lesssim(1+t)^{-\frac{n}{4\sigma}}\|(u, v)\|_{X(t)}^{p^*}\int_0^t(1+\tau)^{-1}\left(\mu_1\left(c(1+\tau)^{-\gamma}\right)\right)^{\frac{1}{p^*+1}}\left(\mu_2\left(c(1+\tau)^{-\gamma}\right)\right)^{\frac{p^*}{p^*+1}} d \tau\nonumber\\
&\lesssim (1+t)^{-\frac{n}{4\sigma}}\|(u, v)\|_{X(t)}^{p^*} \int_0^c \frac{1}{s}\big(\mu_1(s)\big)^{\frac{1}{p^*+1}}\big(\mu_2(s)\big)^{\frac{p^*}{p^*+1}} d s\nonumber\\
&\lesssim(1+t)^{-\frac{n}{4\sigma}}\|(u, v)\|_{X(t)}^{p^*},\label{eq20thr1.1}
\end{align}
where we have applied Lemma \ref{integralmoduluslemma} after choosing
\[
\alpha_1=\frac{n}{4\sigma},\quad \alpha_2=\gamma,\quad \beta_1=\frac{1}{p^*+1},\quad  \beta_2=\frac{p^*}{p^*+1}.
\]
For $k\in \{q^*,\infty\}$, since $(n/\sigma)\tron{1-1/k}-1\leq 1$, a similar argument leads to
\begin{align}
\big\|u^{\mathrm{nl}}(t, \cdot)\big\|_{L^k}&\lesssim\|(u, v)\|_{X(t)}^{p^*} \int_0^t(1+t-\tau)^{1-\frac{n}{\sigma}\tron{1-\frac{1}{k}}}(1+\tau)^{-1}\big(\ell_2(\tau)\big)^{p^*}\mu_1\left(c(1+\tau)^{-\gamma}\right) d \tau\nonumber\\
&\lesssim(1+t)^{1-\frac{n}{\sigma}\tron{1-\frac{1}{k}}}\|(u, v)\|_{X(t)}^{p^*}.\label{eq21thr1.1}
\end{align}
Notice that $(1+t)^{\varepsilon_2(q^*)}\ell_2(t)$ is an increasing function (see Lemma \ref{ell1ell2}), one has 
\begin{align}
&\big\||D|^{\sigma}v^{\mathrm{nl}}(t, \cdot)\big\|_{L^2}\nonumber\\
&\lesssim\|(u, v)\|_{X(t)}^{q^*} \int_0^t(1+t-\tau)^{-\frac{n}{4\sigma}-\frac{1}{2}}(1+\tau)^{-1+\varepsilon_2(q^*)}\ell_2(\tau)\big(\ell_2(\tau)\big)^{-1}\mu_2\left(c(1+\tau)^{-\gamma}\right) d\tau \nonumber\\
& \lesssim(1+\tau)^{\varepsilon_2(q^*)}\ell_2(t)\|(u, v)\|_{X(t)}^{q^*}\nonumber\\
&\quad \quad \times \int_0^t(1+t-\tau)^{-\frac{n}{4\sigma}-\frac{1}{2}}(1+\tau)^{-1}\left(\mu_1\left(c(1+\tau)^{-\gamma}\right)\right)^{\frac{1}{p^*+1}}\left(\mu_2\left(c(1+\tau)^{-\gamma}\right)\right)^{\frac{p^*}{p^*+1}} d \tau\nonumber\\
&\lesssim(1+t)^{-\frac{n}{4\sigma}-\frac{1}{2}+\varepsilon_2(q^*)}\|(u, v)\|_{X(t)}^{q^*}\int_0^t(1+\tau)^{-1}\left(\mu_1\left(c(1+\tau)^{-\gamma}\right)\right)^{\frac{1}{p^*+1}}\left(\mu_2\left(c(1+\tau)^{-\gamma}\right)\right)^{\frac{p^*}{p^*+1}} d \tau\nonumber\\
&\lesssim(1+t)^{-\frac{n}{4\sigma}-\frac{1}{2}+\varepsilon_2(q^*)}\|(u, v)\|_{X(t)}^{q^*},\label{eq22thr1.1}
\end{align}
where we have employed Lemma \ref{integralmoduluslemma} with
$$
\alpha_1=\frac{n}{4\sigma}+\frac{1}{2},\quad \alpha_2=\gamma,\quad \beta_1=\frac{q^*}{q^*+1},\quad \beta_2=\frac{1}{q^*+1}.
$$
By the same way, one achieves 
\begin{equation}\label{eq23thr1.1}
\big\|v^{\mathrm{nl}}(t,\cdot)\big\|_{L^k}\lesssim (1+t)^{-\frac{n}{2\sigma}\tron{1-\frac{1}{k}}}\|(u,v)\|^{p^*}_{X(t)}\quad\text{ for }k\in\{2,\infty\}.
\end{equation}
The link of all estimates \eqref{eq20thr1.1}-\eqref{eq23thr1.1} leads to the inequality \eqref{eq1thr1.1}. \medskip

Next, let us prove the inequality \eqref{eqbanachfixedpoint2}. For two elements $(u, v)$ and $(\tilde{u}, \tilde{v})$ from $X(t)$, it is obvious to see that
$$
\Psi(u, v)(t,x)-\Psi(\tilde{u}, \tilde{v})(t,x)=\left(u^{\mathrm{nl}}(t,x)-\tilde{u}^{\mathrm{nl}}(t,x), v^{\mathrm{nl}}(t,x)-\tilde{v}^{\mathrm{nl}}(t,x)\right).
$$
Then, we employ Proposition \ref{Linear estimates hyp} to derive the following estimates for $k \in \{2, \infty\}$:
\begin{align*}
& \big\|\big(u^{\mathrm{nl}}-\tilde{u}^{\mathrm{nl}}\big)(t, \cdot)\big\|_{L^k} \\
& \lesssim \int_0^t(1+t-\tau)^{-\frac{n}{\sigma}\tron{1-\frac{1}{k}}+1}\big\||v(\tau, \cdot)|^{p^*} \mu_1\big(|v(\tau, \cdot)|\big)-|\tilde{v}(\tau, \cdot)|^{p^*} \mu_1(|\tilde{v}(\tau, \cdot)|)\big\|_{L^1 \cap L^2} d \tau
\end{align*}
and 
\begin{align*}
& \big\|\big(u^{\mathrm{nl}}-\tilde{u}^{\mathrm{nl}}\big)(t, \cdot)\big\|_{\dot{H}^{\sigma}} \\
& \lesssim \int_0^t(1+t-\tau)^{-\frac{n}{4\sigma}}\big\||v(\tau, \cdot)|^{p^*} \mu_1\big(|v(\tau, \cdot)|\big)-|\tilde{v}(\tau, \cdot)|^{p^*} \mu_1(|\tilde{v}(\tau, \cdot)|)\big\|_{L^1 \cap L^2} d \tau.
\end{align*}
For $k \in \{q^*, \infty\}$, the estimates from Proposition \ref{Linear estimate par} imply
\begin{align*}
&\big\|\big(v^{\mathrm{nl}}-\tilde{v}^{\mathrm{nl}}\big)(t, \cdot)\big\|_{L^k} \\
&\quad \lesssim \int_0^t (1+t-\tau)^{-\frac{n}{2\sigma}\left(1-\frac{1}{k}\right)} \big\| |u(\tau,\cdot)|^{q^*} \mu_2(|u(\tau,\cdot)|) - |\tilde{u}(\tau,\cdot)|^{q^*} \mu_2(|\tilde{u}(\tau,\cdot)|) \big\|_{L^1 \cap L^2} d\tau
\end{align*}
and
\begin{align*}
& \big\|\big(v^{\mathrm{nl}}-\tilde{v}^{\mathrm{nl}}\big)(t, \cdot)\big\|_{\dot{H}^{\sigma}} \\
&\quad \lesssim \int_0^t(1+t-\tau)^{-\frac{n}{4\sigma}-\frac{1}{2}}\big\||u(\tau, \cdot)|^{q^*} \mu_2\big(|u(\tau, \cdot)|\big)-|\tilde{u}(\tau, \cdot)|^{q^*} \mu_2(|\tilde{u}(\tau, \cdot)|)\big\|_{L^1 \cap L^2} d \tau.
\end{align*}
The application of the mean value theorem gives the following integral representation:
\begin{align*}
&|v(\tau,x)|^{p^*} \mu_1(|v(\tau,x)|)-|\tilde{v}(\tau,x)|^{p^*} \mu_1(|\tilde{v}(\tau,x)|)\\
&\qquad =(v(\tau,x)-\tilde{v}(\tau,x)) \int_{0}^{1}d_{|v|} \Phi_p(\omega v(\tau,x)+(1-\omega) \tilde{v}(\tau,x)) d \omega,
\end{align*}
where $\Phi_p(s)$ is defined as in \eqref{critnonlinearity}. Since the condition \eqref{con1thr1.1} holds, one gets
$$
d_{|v|} \Phi_p(v)=p^*|v(t,x)|^{p^*-1} \mu_1(|v(t,x)|)+|v(t,x)|^{p^*} d_{|v|} \mu_1(|v(t,x)|) \lesssim|v(t,x)|^{p^*-1} \mu_1(|v(t,x)|).
$$
Thus, it follows that
$$
\begin{aligned}
& \big||v(\tau,x)|^{p^*} \mu_1(|v(\tau,x)|)-|\tilde{v}(\tau,x)|^{p^*} \mu_1(|\tilde{v}(\tau,x)|) \big|\\
& \quad \lesssim |v(\tau,x)-\tilde{v}(\tau,x)| \int_0^1 |\omega v(\tau,x)+(1-\omega) \tilde{v}(\tau,x)|^{p^*-1} \mu_1\big(|\omega v(\tau,x)+(1-\omega) \tilde{v}(\tau,x)|\big) d \omega .
\end{aligned}
$$
Similarly, we also obtain
$$
\begin{aligned}
& \big||u(\tau,x)|^{q^*} \mu_2(|u(\tau,x)|)-|\tilde{u}(\tau,x)|^{q^*} \mu_2(|\tilde{u}(\tau,x)|) \big| \\
& \quad \lesssim|u(\tau,x)-\tilde{u}(\tau,x)| \int^{1}_0 |\omega u(\tau,x)+(1-\omega) \tilde{u}(\tau,x)|^{q^*-1} \mu_2\big(|\omega u(\tau,x)+(1-\omega) \tilde{u}(\tau,x)|\big) d \omega .
\end{aligned}
$$
By the aid of Hölder's inequality and applying the same arguments as in the proof of inequality \eqref{eq1thr1.1}, we arrive at the inequality \eqref{eqbanachfixedpoint2}. Hence, the proof of Theorem \ref{thr1.1} is completed.

\section{Blow-up result}
\subsection{Construction of test functions}\label{subsec4.1}
We now introduce the test functions that will be used in the proof of Theorem \ref{thr1.2}. We define the functions $\eta \in \mathcal{C}_0^{\infty}([0,\infty))$ and $\eta^T$ fulfilling 
\[
\eta(t)=
\left\{
\begin{aligned}
    &1 &&\text{ if }0\leq t\leq 1/2,\\
    &\text{decreasing}&&\text{ if }1/2\leq t\leq 1,\\
    &0 &&\text{ if }t\geq 1,
\end{aligned}
\right.
\text{ and }
\eta^T(t)=
\left\{
\begin{aligned}
&0 &&\text{ if }0\leq t<1/2,\\
&\eta(t) &&\text{ if }t\geq 1/2.
\end{aligned}
\right.
\]
Moreover, the functions $\varphi\in H^{2\sigma}$ and $\varphi^X$ are given by 
\[
\varphi(x):=
\left\{
\begin{aligned}
&1&&\text{ for $|x|\leq 1$},\\
&\sqrt[4]{1+(|x|-1)^{4}}&&\text{ for $|x|\geq 1$},
\end{aligned}
\right.
\text{ and }
\varphi^X(x):=
\left\{
\begin{aligned}
&0&&\text{ for }|x|\leq 1,\\
&\varphi(x)&&\text{ for }|x|\geq 1.
\end{aligned}
\right.
\]
For a large parameter $R\in [0,\infty)$, we introduce the functions
\[
\eta_R(t):=\eta\tron{R^{-1}t}\text{ and }\eta^T_{R}(t):=\eta^T\tron{R^{-1}t},
\]
together with the following space-dependent functions possessing different scaling orders:
\[
\varphi_{j,R}(x):=\varphi\tron{R^{-\frac{1}{j\sigma}}x}\text{ and }\varphi^X_{j,R}(x):=\varphi^X\tron{R^{-\frac{1}{j\sigma}}x}
\]
for $j=1,2$. Let $\kappa=\sigma-\flo{\sigma}$ if $\sigma$ is fractional and $\kappa=\sigma$ if $\sigma$ is an integer. For $j\in \{1,2\}$, we define the test functions
\begin{equation}\label{testfunc}
\begin{aligned}
&\phi_{j,R}(t,x):=\left[\varphi_{j,R}(x)\right]^{n+2\kappa}[\eta_{R}(t)]^{\nu+2},\\
&\phi^{X}_{j,R}(t,x):=\left[\varphi^X_{j,R}(x)\right]^{n+2\kappa}[\eta_{R}(t)]^{\nu},\\
&\phi^{T}_{j,R}(t,x):=\left[\varphi_{j,R}(x)\right]^{n+2\kappa}[\eta^T_{R}(t)]^{\nu},
\end{aligned}
\end{equation}
where the parameter $\nu>0$ will be fixed later, having the following properties:
\begin{equation}\label{domain}
\begin{aligned}
\text{supp}\phi_R(t,x)\subset Q_R&:=\left\{(t,x):x\in \mathbb{R}^n,t\in[0,R]\right\},\\
\text{supp}\phi^{X}_{j,R}(t,x)\subset Q^1_{j,R}&:=\left\{(t,x):|x|\geq R^{\frac{1}{j\sigma}},t\in [0,R]\right\},\\
\text{supp}\phi^{T}_{j,R}(t,x)\subset Q^2_{R}&:=\left\{(t,x):x\in \mathbb{R}^n,t\in\left[R/2,R\right]\right\},\\
Q^3_{j,R}&:=\left\{(t,x):|x|\leq R^{\frac{1}{j\sigma}},t\leq R/2\right\}.
\end{aligned}
\end{equation}
A straightforward calculation gives
\begin{align}
|\partial_t \phi_{j,R}(t,x)| &=[\varphi_{j,R}(x)]^{n+2\kappa}\left|\partial_t[\eta_R(t)^{\nu+2}]\right| \nonumber \\
&\lesssim R^{-1}[\varphi_{j,R}(x)]^{n+2\kappa}[\eta^{T}_R(t)]^{\nu+1} \lesssim R^{-1}\phi^{T}_{j,R}(t,x), \label{testfunceq1} \\
\big|\partial^2_t\phi_{j,R}(t,x)\big|&=[\varphi_{j,R}(x)]^{n+2\kappa}\left|\partial^2_t[\eta_R(t)]^{\nu+2}\right|\nonumber\\
&=[\varphi_{j,R}(x)]^{n+2\kappa}\left|(\nu+2)(\nu+1)[\eta^{T}_R(t)]^{\nu}[\eta^{\prime}_R(t)]^2+(\nu+2)[\eta^{T}_R(t)]^{\nu+1}[\eta^{\prime\prime}_R(t)]\right|\nonumber\\
&\lesssim R^{-2}[\varphi_{j,R}(x)]^{n+2\kappa}[\eta^{T}_R(t)]^{\nu}\lesssim R^{-2}\phi^{T}_{j,R}(t,x) \label{testfunceq2}
\end{align}
for all $(t,x)\in Q^2_R$. Furthermore, if $\sigma$ is an integer, then $\kappa=\sigma$. We use Lemma \ref{lem_Laplace_integer} to conclude that
\begin{align}
&\big|(-\Delta)^{\sigma} \phi_{j,R}(t,x)\big|=\left|\eta_R(t)R^{-1}((-\Delta)^{\sigma}[\varphi]^{n+2\kappa})\tron{R^{-\frac{1}{j\sigma}}x}\right|\nonumber\\
&\qquad\lesssim R^{-2/j}[\eta_R(t)]^{\nu+2}[\varphi^{X}_{j,R}(x)]^{n+2\kappa+2\sigma}\lesssim R^{-2/j}\phi^{X}_{j,R}(t,x),\;\text{ for a.e. } (t,x)\in Q_R.\label{testfunceq3_integer}
\end{align}
If $\sigma$ is fractional, then $\kappa=\sigma-\flo{\sigma}$. Lemma \ref{lem_Laplace_fractional} implies the estimate
\begin{align}\label{testfunceq3_fractional}
\big|(-\Delta)^{\sigma} \phi_{j,R}(t,x)\big|&\lesssim R^{-2/j}[\eta_R(t)]^{\nu+2}[\varphi_{j,R}(x)]^{n+2\kappa} \lesssim R^{-2/j}\phi_{j,R}(t,x)
\end{align}
for a.e. $(t,x)\in Q_R$. Similarly, we obtain the following estimate for a.e. $(t,x)\in Q^2_R$:
\begin{align}
\big|(-\Delta)^{\sigma}\partial_t\phi_{1,R}(t,x)\big| &\lesssim \left|R^{-1}\partial_t\eta^{T}_R(t)[\eta^{T}_R(t)]^{\nu+1}\big((-\Delta)^{\sigma}[\varphi]^{n+2\sigma}\big)\big(R^{-1/\sigma}x\big)\right|\nonumber\\
&\lesssim R^{-3}[\eta^{T}_R(t)]^{\nu+1}[\varphi_{1,R}(x)]^{n+2\kappa}\lesssim R^{-3}\phi^{T}_{1,R}(t,x).\label{testfunceq4}
\end{align}
In addition, the following version of Jensen's inequality comes into play.
\begin{lemma}[Generalized Jensen's inequality, see \cite{EbeGirRei2020}] \label{Jenseninequality}
Let $\eta=\eta(x)$ be a defined and nonnegative function almost everywhere on $\Omega$, provided that $\eta$ is positive in a set of positive measure. Then, for each convex function $h$ on $\mathbb{R}$ the following inequality holds:
$$
h\left(\frac{\int_{\Omega} f(x) \eta(x) d x}{\int_{\Omega} \eta(x) d x}\right) \leq \frac{\int_{\Omega} h\big(f(x)\big)\eta(x) d x}{\int_{\Omega} \eta(x) d x},
$$
where $f$ is any nonnegative function satisfying all the above integrals are meaningful.
\end{lemma}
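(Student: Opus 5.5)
The plan is to use the supporting-line (subgradient) characterization of convexity instead of any approximation argument. Put $M:=\int_\Omega \eta(x)\,dx$. Since $\eta\ge 0$ a.e. and $\eta>0$ on a set of positive measure, we have $M>0$; we also take $M<\infty$, which is part of the requirement that the quotients be meaningful. Next define
\[
m:=\frac{\int_\Omega f(x)\eta(x)\,dx}{\int_\Omega \eta(x)\,dx}.
\]
Because $f\ge 0$, $m$ is a well-defined element of $[0,\infty)$ under the standing assumption that the integrals make sense, so $m$ lies in the interior of the domain $\mathbb{R}$ of $h$.

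The key step is the existence of a supporting line. A convex function $h$ on $\mathbb{R}$ has finite one-sided derivatives at every interior point, and any $a\in[h'_-(m),h'_+(m)]$ satisfies
\[
h(y)\ \ge\ h(m)+a\,(y-m)\qquad\text{for all } y\in\mathbb{R}.
\]
This follows from the monotonicity of difference quotients of convex functions. We apply it with $y=f(x)$ for a.e. $x\in\Omega$, multiply by $\eta(x)\ge 0$ (which keeps the inequality), and integrate over $\Omega$ to obtain
\[
\int_\Omega h\big(f(x)\big)\eta(x)\,dx\ \ge\ h(m)\int_\Omega\eta(x)\,dx + a\Big(\int_\Omega f(x)\eta(x)\,dx - m\int_\Omega \eta(x)\,dx\Big).
\]
By the definition of $m$, the bracket vanishes. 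Dividing by $M>0$ then gives exactly the claimed inequality.

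The only delicate point is justifying the integration, since $h(f)\eta$ could a priori have an integral of the form $\infty-\infty$. Here the affine lower bound does the work. The negative part of $h(f)\eta$ is dominated by $\big(|h(m)|+|a|m+|a|f\big)\eta$, which is integrable because $f\eta$ and $\eta$ are. Hence $\int_\Omega h(f)\eta\,dx$ is well defined in $(-\infty,+\infty]$, and the inequality holds, trivially so if the right-hand side is $+\infty$. I expect this integrability bookkeeping to be the only real obstacle; the rest is the standard Jensen argument with the probability measure $\eta\,dx/M$.
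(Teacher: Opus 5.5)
Your proof is correct. It is the classical supporting-line proof of Jensen's inequality for the probability measure $\eta\,dx/\int_\Omega \eta\,dx$, which is all the paper relies on; the paper gives no proof of its own and only cites \cite{EbeGirRei2020}. Your integrability bookkeeping adds the care the terse statement leaves implicit: you read ``meaningful'' as $0<\int_\Omega\eta\,dx<\infty$ and $\int_\Omega f\eta\,dx<\infty$, and you use the affine minorant to make $\int_\Omega h(f)\eta\,dx$ well defined in $(-\infty,+\infty]$.
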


Now, we are ready to present the proof of Theorem \ref{thr1.2} in the next sequel.
\subsection{Proof of Theorem 1.2}\label{subsec4.2}

{\bf First, let us consider the case $p\leq p_{\rm c}$ and $q\geq q_{\rm c}$}. Our proof is organized as follows:
\begin{itemize}
\item Step 1: Define the functionals $I_R, J_R, I^*_R, J^*_R$ and derive preliminary estimates on $Q^1_{j,R}, Q^2_{R}$, and $Q^3_{j,R}$ with $j=1,2$.
\item Step 2: Introduce auxiliary functions $g_p, g_q, G_p, G_q$ and establish a system of ordinary differential inequalities for these functions.
\item Step 3: Decompose the resulting system of ordinary differential inequalities into the terms $A_i$ and $B_i$ for $i=1,2,3$, and derive the corresponding estimates for each term.
\item Step 4: Consolidate the obtained estimates in Step 3 to complete our proof.
\end{itemize}
The full details of the proof are presented as follows:

\noindent {\bf $\bullet$ Step 1:} First of all, we define the following two functionals:
\[
\begin{aligned}
I_{R}&:=\int_{0}^{\infty}\int_{\mathbb{R}^n}|v(t,x)|^{p^*}\mu_1(|v(t,x)|)\phi_{1,R}(t,x)\;dxdt \\
&=\int_{Q_{R}}|v(t,x)|^{p^*}\mu_1(|v(t,x)|)\phi_{1,R}(t,x)d(t,x),\\    J_{R}&:=\int_{0}^{\infty}\int_{\mathbb{R}^n}|u(t,x)|^{q^*}\mu_2(|u(t,x)|)\phi_{2,R}(t,x)\;dxdt \\
&=\int_{Q_{R}}|u(t,x)|^{q^*}\mu_2(|u(t,x)|)\phi_{2,R}(t,x)d(t,x),  
\end{aligned}
\]
where the functions $\phi_{1,R}(t,x)$, $\phi_{2,R}(t,x)$ are defined as in \eqref{testfunc}. Let us assume that $(u,v)=(u(t,x),v(t,x))$ is a global (in time) Sobolev solution to \eqref{mainsystem} in sense of Definition \ref{def_weaksolution}. We multiply these equations on the left-hand sides of \eqref{mainsystem} by $\phi_{1,R}(t,x)$, $\phi_{2,R}(t,x)$ to arrive at
\begin{align}
0\leq I_{R}&=-\int_{\mathbb{R}^n}u_1(x)\phi_{1,R}(0,x)\;dx\nonumber\\
&\quad +\int_{Q_{R}}u(t,x)\big(\partial^2_t \phi_{1,R}(t,x)+(-\Delta)^{\sigma}\phi_{1,R}(t,x)-(-\Delta)^{\sigma}\partial_t \phi_{1,R}(t,x)\big)\;d(t,x)\nonumber\\
&=:-\mathcal{D}(u_1)+I^*_{R},\label{thr1.2esI1R}\\
0\leq J_{R}&=-\int_{\mathbb{R}^n}v_1(x)\phi_{2,R}(0,x)\;dx \nonumber\\
&\quad+\int_{Q_{R}}v(t,x)\big(\partial^2_t \phi_{2,R}(t,x)+(-\Delta)^{\sigma}\phi_{2,R}(t,x)-\partial_t  \phi_{2,R}(t,x)\big)\;d(t,x)\nonumber\\
&=:-\mathcal{D}(v_1)+J^*_{R}.\label{thr1.2esJ2R}
\end{align}
From the estimates \eqref{testfunceq2}, \eqref{testfunceq3_integer}, \eqref{testfunceq3_fractional} and \eqref{testfunceq4}, we obtain
\begin{equation}\label{thr1.2esI1R*}
\begin{aligned}
\left|I_{R}^*\right|\lesssim R^{-2}\Bigg( \int_{Q^1_{1,R}}|u(t,x)|\left(\phi^{X}_{1,R}(t,x)\right)d(t,x)&+\int_{Q^2_{R}}|u(t,x)|\tron{\phi^{T}_{1,R}(t,x)}d(t,x)\\
&\qquad+\int_{Q^3_{1,R}}|u(t,x)|d(t,x)\Bigg),
\end{aligned}   
\end{equation}
\begin{equation}\label{thr1.2esJ2R*}
\begin{aligned}
\left|J_{R}^*\right|\lesssim  R^{-1}\Bigg(\int_{Q^1_{2,R}}|v(t,x)|\left(\phi^{X}_{2,R}(t,x)\right)d(t,x)&+\int_{Q^2_{R}}|v(t,x)|\left(\phi^{T}_{2,R}(t,x)\right)d(t,x)\\
&\qquad+\int_{Q^3_{2,R}}|v(t,x)|d(t,x)\Bigg).
\end{aligned}
\end{equation}
We recall the functions $\Phi_p(s)$ and $\Phi_q(s)$ defined as in \eqref{critnonlinearity}. It is clear to verify that $\Phi_p$ and $\Phi_q$ are convex on $(0, c_0]$ for a sufficiently small $c_0 > 0$ even on $[0, \infty)$. For any $0<\delta<1$, the application of Lemma \ref{Jenseninequality} with $h(s)=\Phi_q(s), f(t,x)=|u(t,x)|\big(\phi^{X}_{1,R}(t,x)\big),\eta(t,x)=(\phi^{X}_{1,R}(t,x))^{\delta}$ and $\Omega \equiv  Q^1_{1,R}$ leads to the following estimate:
\[
\begin{aligned}
&\Phi_q\left(\frac{\int_{Q^1_{1,R}}|u(t,x)|\left(\phi^{X}_{1,R}(t,x)\right)  d(t,x)}{\int_{Q^1_{1,R}}[\phi^{X}_{1,R}(t,x)]^{\delta}d(t,x)}\right) \\
&\quad \quad \leq \frac{\int_{Q^1_{1,R}}\Phi_q\left(|u(t,x)|\left(\phi^{X}_{1,R}(t,x)\right)^{1-\delta}\right)(\phi^{X}_{1,R}(t,x))^{\delta}d(t,x)}{\int_{Q^1_{1,R}}[\phi^{X}_{1,R}(t,x)]^{\delta}d(t,x)}.
\end{aligned}
\]
By a change of variables $\tilde{t}=R^{-1}t, \tilde{x}=R^{-1/\sigma} x$, it holds
$$
\int_{Q^1_{1,R}}[\phi^{X}_{1,R}(t,x)]^{\delta}\;d(t,x)\sim R^{\frac{n}{\sigma}+1}\int_{1}^{\infty}[\varphi(\tilde{x})]^{\delta(n+2\kappa)}|\tilde{x}|^{n-1}\;d|\tilde{x}|\int_{0}^{1}[\eta(\tilde{t})]^{\delta \nu}\;d\tilde{t}\sim R^{\frac{n}{\sigma}+1}
$$
for any $\delta\in (n/(n+2\kappa),1)$. Combining this with the increasing property of $\mu_2$, we obtain
\begin{align*}
&\Phi_q\left(\frac{\int_{Q^1_{1,R}}|u(t,x)|\left(\phi^{X}_{1,R}(t,x)\right)d(t,x)}{CR^{\frac{n}{\sigma}+1}}\right)\nonumber \\
&\qquad\lesssim \frac{\int_{Q^1_{1,R}}\Phi_q(|u(t,x)|)\left(\phi^{X}_{1,R}(t,x)\right)^{q^*+(1-q^*)\delta}d(t,x)}{R^{\frac{n}{\sigma}+1}}.
\end{align*}
Since the function $\Phi_q=\Phi_q(s)$ is also a strictly increasing function on $[0, \infty)$, one has
\begin{equation}\label{thr1.2_eq1}
\begin{aligned}
&\int_{Q^1_{1,R}}|u(t,x)|\left(\phi^{X}_{1,R}(t,x)\right)d(t,x)\\
&\qquad\lesssim R^{\frac{n}{\sigma}+1} \Phi_q^{-1}\left(\frac{\int_{Q^1_{1,R}}\Phi_q(|u(t,x)|)\left(\phi^{X}_{1,R}(t,x)\right)^{q^*+(1-q^*)\delta}d(t,x)}{CR^{\frac{n}{\sigma}+1}}\right).
\end{aligned}
\end{equation}
The same argument yields
\begin{equation}\label{thr1.2_eq2}
\begin{aligned}
&\int_{Q^1_{2,R}}|v(t,x)|\left(\phi^{X}_{2,R}(t,x)\right)d(t,x)\\
&\qquad \lesssim R^{\frac{n}{2\sigma}+1} \Phi_p^{-1}\left(\frac{\int_{Q^1_{2,R}}\Phi_p(|v(t,x)|)\left(\phi^{X}_{2,R}(t,x)\right)^{p^*+(1-p^*)\delta}d(t,x)}{CR^{\frac{n}{2\sigma}+1}}\right).
\end{aligned}
\end{equation}
In a similar manner, we obtain analogous estimates for $Q^2_{R}$ as follows:
\begin{equation}\label{thr1.2_eq3}
\begin{aligned}
&\int_{Q^2_{R}}|u(t,x)|\left(\phi^{T}_{1,R}(t,x)\right)d(t,x)\\
&\qquad\lesssim R^{\frac{n}{\sigma}+1} \Phi_q^{-1}\left(\frac{\int_{Q^2_{R}}\Phi_q(|u(t,x)|)\left(\phi^{T}_{1,R}(t,x)\right)^{q^*+(1-q^*)\delta}d(t,x)}{CR^{\frac{n}{\sigma}+1}}\right),
\end{aligned}
\end{equation}
\begin{equation}\label{thr1.2_eq4}
\begin{aligned}
&\int_{Q^2_{R}}|v(t,x)|\left(\phi^{T}_{2,R}(t,x)\right)d(t,x)\\
&\qquad\lesssim R^{\frac{n}{2\sigma}+1} \Phi_p^{-1}\left(\frac{\int_{Q^2_{R}}\Phi_p(|v(t,x)|)\left(\phi^{T}_{2,R}(t,x)\right)^{p^*+(1-p^*)\delta}d(t,x)}{CR^{\frac{n}{2\sigma}+1}}\right)
\end{aligned}
\end{equation}
by using the calculation
\[
\int_{Q^2_{R}}\left[\phi^{T}_{j,\lambda}(t,x)\right]^{\delta}d(t,x)=R^{\frac{n}{j\sigma}+1}\int_{0}^{\infty}[\varphi(\tilde{x})]^{\delta(n+2\kappa)}|\tilde{x}|^{n-1}\;d|\tilde{x}|\int_{1/2}^{1}[\eta(\tilde{t})]^{\delta \nu}\;d\tilde{t}\sim CR^{\frac{n}{j\sigma}+1}.
\]
for any $\delta\in (n/(n+2\kappa),1)$ and $j\in \{1,2\}$. To obtain the estimates on $Q^3_{j,R}$ with $j\in \{1,2\}$, we first define the following functions:
\[
\xi(x)=
\begin{cases}
    |x| &\text{ for }|x|\leq 1, \\
    0 &\text{ for }|x|>1, 
\end{cases}
\quad \text{ and }\quad 
\xi_{j,R}(x)=\xi\tron{R^{-\frac{1}{j\sigma}}x}.
\]
Notice that for any sufficiently small positive parameter $a$, we have
\begin{align}
\int_{Q^3_{j,R}}[\xi_{j,R}(x)]^{-\frac{a}{q^*-1}}d(t,x)&=R^{\frac{n}{j\sigma}+1}\int_{0}^{1/2}d\tilde{t}\int_{0}^{1}|\tilde{x}|^{n-\frac{a}{q^*-1}-1}\;d|\tilde{x}|\sim CR^{\frac{n}{j\sigma}+1}.\label{thr1.2_eq5}
\end{align}
Employing Lemma \ref{Jenseninequality} again for $h(s)=\Phi_q(s)$, $f(t,x)=|u(t,x)|$, $\eta(x)=\left[\xi_{1,R}(x)\right]^{-\frac{a}{q^*-1}}$ and $\Omega\equiv Q^3_{1,R}$, we get
\[
\Phi_q\left(\frac{\int_{Q^3_{1,R}}|u(t,x)|d(t,x)}{\int_{Q^3_{1,R}}[\xi_{1,R}(x)]^{-\frac{a}{q^*-1}}d(t,x)}\right) \leq \frac{\int_{Q^3_{1,R}}\Phi_q\left(|u(t,x)|[\xi_{1,R}(x)]^{\frac{a}{q^*-1}}\right)[\xi_{1,R}(x)]^{-\frac{a}{q^*-1}}d(t,x)}{\int_{Q^3_R}[\xi_{1,R}(x)]^{-\frac{a}{q^*-1}}d(t,x)}.
\]
The combination of this with \eqref{thr1.2_eq5}, it holds
\begin{equation}\label{thr1.2_eq7}
\int_{Q^3_{1,R}}|u(t,x)|d(t,x)\lesssim R^{\frac{n}{\sigma}+1}\Phi^{-1}_q\tron{\frac{\int_{Q^3_{1,R}}\Phi_q\tron{|u(t,x)|}[\xi_{1,R}(x)]^{a}}{CR^{\frac{n}{\sigma}}}}.
\end{equation}
In a similar way, one also arrives at
\begin{equation}\label{thr1.2_eq8}
\int_{Q^3_{2,R}}|v(t,x)|d(t,x)\lesssim R^{\frac{n}{2\sigma}+1}\Phi^{-1}_p\tron{\frac{\int_{Q^3_{2,R}}\Phi_p\tron{|v(t,x)|}[\xi_{2,R}(x)]^{a}}{CR^{\frac{n}{2\sigma}}}}.
\end{equation}
\noindent $\bullet$ {\bf Step 2:} We define the following auxiliary functions for $\lambda\in (0,R)$:
\begin{align*}
    &g_{1,q}(\lambda):=\int_{Q^1_{1,R}} \Phi_q(|u(t,x)|)\left(\phi^{X}_{1,\lambda}(t,x)\right)^{q^*+(1-q^*)\delta}d(t,x)\text{ and }G_{1,q}(s):=\int_0^s g_{1,q}(\lambda)\lambda^{-1} d \lambda,\\
    &g_{1,p}(\lambda):=\int_{Q^1_{2,R}} \Phi_p(|u(t,x)|)\left(\phi^{X}_{2,\lambda}(t,x)\right)^{p^*+(1-p^*)\delta}d(t,x)\text{ and }G_{1,p}(s):=\int_0^s g_{1,p}(\lambda)\lambda^{-1} d \lambda,\\
    &g_{2,q}(\lambda):=\int_{Q^2_{R}} \Phi_q(|u(t,x)|)\left(\phi^{T}_{1,\lambda}(t,x)\right)^{q^*+(1-q^*)\delta}d(t,x)\text{ and }G_{2,q}(s):=\int_0^s g_{2,q}(\lambda)\lambda^{-1} d \lambda,\\
    &g_{2,p}(\lambda):=\int_{Q^2_{R}} \Phi_p(|v(t,x)|)\left(\phi^{T}_{2,\lambda}(t,x)\right)^{p^*+(1-p^*)\delta}d(t,x)\text{ and }G_{2,p}(s):=\int_0^s g_{2,p}(\lambda)\lambda^{-1} d \lambda,\\
    &g_{3,q}(\lambda):=\int_{Q^3_{1,R}} \Phi_q(|u(t,x)|)[\xi_{1,\lambda}(x)]^{a}d(t,x)\text{ and }G_{3,q}(s):=\int_0^s g_{3,q}(\lambda)\lambda^{-1} d \lambda,\\
    &g_{3,p}(\lambda):=\int_{Q^3_{2,R}} \Phi_p(|v(t,x)|)[\xi_{2,\lambda}(x)]^{a}d(t,x)\text{ and }G_{3,p}(s):=\int_0^s g_{3,q}(\lambda)\lambda^{-1} d \lambda,
\end{align*}
and
\[
g_\gamma(\lambda):=\sum_{i=1}^{3}g_{i,\gamma}(\lambda),\quad G_\gamma(\lambda):=\sum_{i=1}^{3}G_{i,\gamma}(\lambda)
\]
with $\gamma\in \{p,q\}$. Moreover, the following relations hold:
$$
g_{i,\gamma}(s)=G^{\prime}_{i,\gamma}(s)s \quad\text{ and }\quad g_\gamma(s)=G^{\prime}_\gamma(s)s
$$
for $i=1,2,3$ and $\gamma\in \{p,q\}$. Since $\Phi_q^{-1}$ and $\Phi_p^{-1}$ are concave functions, combining the estimates from \eqref{thr1.2_eq1} to \eqref{thr1.2_eq4} and \eqref{thr1.2_eq7}, \eqref{thr1.2_eq8}, we arrive at
\begin{align}
    I_{R}+\mathcal{D}(u_1)&\lesssim R^{\frac{n}{\sigma}-1}\Phi_q^{-1}\tron{\frac{G^{\prime}_q(R)}{CR^{\frac{n}{\sigma}}}},\label{upperIR} \\
    J_{R}+\mathcal{D}(v_1)&\lesssim R^{\frac{n}{2\sigma}}\Phi_p^{-1}\tron{\frac{G^{\prime}_p(R)}{CR^{\frac{n}{2\sigma}}}}\label{upperJR}. 
\end{align}
Next, we are going to estimate the lower bound of $I_R$ and $J_R$. We can express $G_q(R)$ as follows:
\begin{equation}\label{thr1.2_eq9}
\begin{aligned}
G_q(R)&=\int_{Q^1_{1,R}} \Phi_q(|u(t,x)|)\tron{\int_{0}^{R}\tron{\left(\phi^{X}_{1,\lambda}(t,x)\right)^{q^*+(1-q^*)\delta}\lambda^{-1}\;d \lambda}}d(t,x)\\
&\qquad+\int_{Q^2_{R}} \Phi_q(|u(t,x)|)\tron{\int_{0}^{R}\tron{\left(\phi^{T}_{1,\lambda}(t,x)\right)^{q^*+(1-q^*)\delta}\lambda^{-1}\;d \lambda}}d(t,x)\\
&\qquad+\int_{Q^3_{1,R}} \Phi_q(|u(t,x)|)\tron{\int_{0}^{R}[\xi_{1,\lambda}(x)]^{a}\lambda^{-1}\;d \lambda}d(t,x).
\end{aligned}
\end{equation}
Moreover, the function $\varphi$ is decreasing on $\left[0,\infty\right)$ and the function $\eta$ is decreasing on $\left[0,1\right]$. Hence, we may conclude from $t\lambda^{-1}\geq tR^{-1}$ and $|x|\lambda^{-1/\sigma}\geq |x|R^{-1/\sigma}$ that
\begin{equation}\label{thr1.2_eq10}
\phi^{X}_{1,\lambda}(t,x)\leq\phi^{X}_{1,R}(t,x) \text{ for } (t,x)\in Q^1_{1,R},\quad \phi^{T}_{1,\lambda}(t,x)\leq\phi^{T}_{1,R}(t,x) \text{ for } (t,x)\in Q^2_{R}.
\end{equation}
Denoting $\tilde{x}=x\lambda^{-1/\sigma}$ and $\tilde{t}=t\lambda^{-1}$, we deduce from \eqref{thr1.2_eq9} and \eqref{thr1.2_eq10} the following chain of inequalities:
\begin{align}
G_q(R) &\lesssim \int_{Q^1_{1,R}}\Phi_q(|u(t,x)|)[  \phi^{X}_{1,R}(t,x)]^{q^*+(1-q^*)\delta-\delta_1}\tron{\int_{1}^{\infty}\left(\varphi^{X}(\tilde{x})\right)^{\delta_1(n+2\kappa)}|\tilde{x}|^{-1}\;d|\tilde{x}|}\;d(t,x)\nonumber\\
&\quad+\int_{Q^2_{R}} \Phi_q(|u(t,x)|)[  \phi^{T}_{1,R}(t,x)]^{q^*+(1-q^*)\delta-\delta_1}\tron{\int_{1/2}^{1}\left(\eta^{T}(\tilde{t})\right)^{\nu\delta_1}\tilde{t}^{-1}\;d\tilde{t}}\;d(t,x)\nonumber\\
&\quad+\int_{Q^3_{1,R}}\Phi_q(|u(t,x)|)\tron{\int_{0}^{1}[\xi(\tilde{x})]^{a}|\tilde{x}|^{-1}\;d|\tilde{x}|}\;d(t,x)\nonumber\\
&\lesssim \int_{Q^1_{1,R}} \Phi_q(|u(t,x)|)[  \phi^{X}_{1,R}(t,x)]^{q^*+(1-q^*)\delta-\delta_1}d(t,x)\nonumber\\
&\quad+\int_{Q^2_{R}} \Phi_q(|u(t,x)|)[  \phi^{T}_{1,R}(t,x)]^{q^*+(1-q^*)\delta-\delta_1}d(t,x)+\int_{Q^3_{1,R}}\Phi_q(|u(t,x)|)d(t,x)\label{thr1.2_eq11}
\end{align}
for a sufficiently small positive constant $\delta_1$, which will be chosen later. An analogous argument implies
\begin{equation}\label{thr1.2_eq12}
\begin{aligned}
&G_p(R)\lesssim \int_{Q^1_{2,R}} \Phi_p(|v(t,x)|)\left(\phi^{X}_{2,R}\left(t,x\right)\right)^{p^*+(1-p^*)\delta-\delta_1}d(t,x)\\
&\quad+\int_{Q^2_{R}} \Phi_p(|v(t,x)|)\left(\phi^{T}_{2,R}\left(t,x\right)\right)^{p^*+(1-p^*)\delta-\delta_1}d(t,x)+\int_{Q^3_{2,R}} \Phi_p(|v(t,x)|)d(t,x).
\end{aligned}
\end{equation}
Since $\phi^X_{j,R}(t,x)\leq \big(\phi^{X}_{j,R}(t,x)\big)^{\frac{\nu}{\nu+2}}$, $\phi^T_{j,R}(t,x)\leq \big(\phi^{T}_{j,R}(t,x)\big)^{\frac{\nu}{\nu+2}}$ with $j=1,2$, it follows from \eqref{thr1.2_eq11} and \eqref{thr1.2_eq12} that
\[
\begin{aligned}
G_q(R) \lesssim \int_{Q_{R}}|u(t,x)|^{q^*} \mu_2(|u(t,x)|)[\phi_{1,R}(t,x)]^{\frac{\nu}{\nu+2}\tron{q^*+(1-q^*)\delta-\delta_1}} d(t,x), \\
G_p(R) \lesssim \int_{Q_{R}}|v(t,x)|^{p^*} \mu_1(|v(t,x)|)[\phi_{2,R}(t,x)]^{\frac{\nu}{\nu+2}\tron{p^*+(1-p^*)\delta-\delta_1}}d(t,x).
\end{aligned}
\]
By the choice $\delta_1<p^*+(1-p^*)\delta-1$ and 
\[ 
\nu\geq \max\left\{\frac{2}{p^*+(1-p^*)\delta-\delta_1-1},\frac{2}{q^*+(1-q^*)\delta-\delta_1-1}\right\},
\]
we obtain
\begin{equation}\label{thr1.2_eq13}
G_q(R) \lesssim \int_{Q_{R}}|u(t,x)|^{q^*} \mu_2(|u(t,x)|)\phi_{1,R}(t,x)d(t,x),
\end{equation}
\begin{equation}\label{thr1.2_eq14}
G_p(R) \lesssim \int_{Q_{R}}|v(t,x)|^{p^*} \mu_1(|v(t,x)|) \phi_{2,R}(t,x) d(t,x). 
\end{equation}
Let us define 
\begin{equation}
\begin{aligned}
    &\tilde{I}_{R}=\int_{Q_{R}}|v(t,x)|^{p^*} \mu_1(|v(t,x)|) \phi_{2,R}(t,x) d(t,x),\nonumber\\
    &\tilde{J}_{R}=\int_{Q_{R}}|u(t,x)|^{q^*} \mu_2(|u(t,x)|) \phi_{1,R}(t,x) d(t,x).\nonumber
\end{aligned}
\end{equation}
Since $\Phi_q\big(|u(t,x)|)\in L^1((0,\infty)\times \mathbb{R}^n\big)$, we have
\[
\lim_{R\to \infty}\tilde{J}_{R}=\lim_{R\to \infty}J_{R}=\int_{0}^{\infty}\int_{\mathbb{R}^n}\Phi_q(|u(t,x)|)d(t,x)<\infty.
\]
Hence, for $R>R_0$ large enough, it holds $J_{R}\leq \tilde{J}_{R}\leq 2J_{R}$. Because of $\tilde{I}_{R}\leq I_{R}$, we deduce from \eqref{thr1.2_eq13} and \eqref{thr1.2_eq14} that
\begin{equation}\label{thr1.2_eq15}
G_p(R)\lesssim I_{R}\quad \text{ and }\quad G_{q}(R)\lesssim J_{R}.
\end{equation}
\noindent Plugging \eqref{thr1.2_eq15} into \eqref{upperIR} and \eqref{upperJR} gives
\begin{align*}
 G_q(R)+\mathcal{D}(u_1)&\lesssim I_{R}+\mathcal{D}(u_1)\lesssim  R^{\frac{n}{2\sigma}} \Phi_p^{-1}\left(\frac{G_p^{\prime}(R)}{CR^{\frac{n}{2\sigma}}}\right),\\
G_p(R)+\mathcal{D}(v_1)&\lesssim J_{R}+\mathcal{D}(v_1)\lesssim  R^{\frac{n}{\sigma}-1}\Phi_q^{-1}\left(\frac{G_q^{\prime}(R)}{CR^{\frac{n}{\sigma}}}\right).
\end{align*}
Consequently, one achieves
\begin{align*}
    \Phi_p\left(\frac{G_q(R)+\mathcal{D}(u_1)}{CR^{\frac{n}{2\sigma}}}\right) \lesssim \frac{G_p^{\prime}(R)}{R^{\frac{n}{2\sigma}}}\text{ and }\Phi_q\left(\frac{G_p(R)+\mathcal{D}(v_1)}{CR^{\frac{n}{\sigma}-1}}\right) \lesssim \frac{G_q^{\prime}(R)}{R^{\frac{n}{\sigma}}}.
\end{align*}
Then, recalling the definition of the functions $\Phi_p$ and $\Phi_q$ we derive
\begin{align*}
    \left(\frac{G_q(R)+\mathcal{D}(u_1)}{CR^{\frac{n}{2\sigma}}}\right)^{p^*} \mu_1\left(\frac{G_q(R)+\mathcal{D}(u_1)}{CR^{\frac{n}{2\sigma}}}\right) &\lesssim \frac{G_p^{\prime}(R)}{R^{\frac{n}{2\sigma}}},  \\
    \left(\frac{G_p(R)+\mathcal{D}(v_1)}{CR^{\frac{n}{\sigma}-1}}\right)^{q^*} \mu_2\left(\frac{G_p(R)+\mathcal{D}(v_1)}{CR^{\frac{n}{\sigma}-1}}\right) &\lesssim \frac{G_q^{\prime}(R)}{R^{\frac{n}{\sigma}}}.
\end{align*}
These estimates imply
\begin{align*}
    R^{\frac{n}{2\sigma}-\frac{p^*n}{2\sigma}}\big(G_q(R)+\mathcal{D}(u_1)\big)^{p^*}\mu_1\left(\frac{G_q(R)+\mathcal{D}(u_1)}{CR^{\frac{n}{2\sigma}}}\right) &\lesssim G_p^{\prime}(R),   \\
    R^{\frac{n}{\sigma}-q^*\tron{\frac{n}{\sigma}-1}}\big(G_p(R)+\mathcal{D}(v_1)\big)^{q^*}\mu_2\left(\frac{G_p(R)+\mathcal{D}(v_1)}{CR^{\frac{n}{\sigma}-1}}\right) &\lesssim G_q^{\prime}(R)
\end{align*}
for all $R \geq R_0$. Due to the increasing property of the functions $\mu_1=\mu_1(s), \mu_2=\mu_2(s), G_p=$ $G_p(s)$ and $G_q=G_q(s)$, the following inequalities hold:
\begin{align*}
    R^{\frac{n}{2\sigma}-\frac{p^*n}{2\sigma}}\mu_1\left(C_0 R^{-\frac{n}{2\sigma}}\right)\big(G_q(R)+\mathcal{D}(u_1)\big)^{p^*} &\lesssim G_p^{\prime}(R), \\
    R^{\frac{n}{\sigma}-q^*\tron{\frac{n}{\sigma}-1}}\mu_2\big(C_0 R^{-\frac{n}{\sigma}+1}\big)\left(G_p(R)+\mathcal{D}(v_1)\right)^{q^*} &\lesssim G_q^{\prime}(R)
\end{align*}
for all $R\geq R_0$, where $C_0:=C^{-1}\min \left\{G_p\left(R_0\right), G_q\left(R_0\right)\right\}$. Denoting
$$
\tau_1(\rho):=\rho^{\frac{n}{2\sigma}-\frac{p^*n}{2\sigma}}\mu_1\left(C_0 \rho^{-\frac{n}{2\sigma}}\right), \quad \tau_2(\rho):=\rho^{\frac{n}{\sigma}-q^*\tron{\frac{n}{\sigma}-1}}\mu_2\left(C_0 \rho^{-\frac{n}{\sigma}+1}\right),
$$
we obtain the following system of ordinary differential inequalities for $\tau \geq R_0$ :
\begin{equation}\label{thr1.2_eq16}
G_p^{\prime}(R)\geq C_1 \tau_1(R)\left(G_q(R)\right)^{p^*}+C_1\tau_1(R)\big(\mathcal{D}(v_1)\big)^{p^*}, \\
\end{equation}
\begin{equation}\label{thr1.2_eq17}
G_q^{\prime}(R) \geq C_1 \tau_2(R)\left(G_p(R)\right)^{q^*}+C_1\tau_2(R)\big(\mathcal{D}(u_1)\big)^{q^*}.
\end{equation}
$\bullet$ {\bf Step 3:} For any $R\geq R_0$, after multiplying \eqref{thr1.2_eq16} by $G_q^{\prime}(\tau)$ and integrating by parts over $\left[R_0, R\right]$, one finds
\begin{align*}
& G_p(R) G_q^{\prime}(R)-G_p\left(R_0\right) G_q^{\prime}\left(R_0\right)-\int^R_{R_0} G_p(\rho) G_q^{\prime \prime}(\rho) d \rho \\
&\quad \geq \frac{C_1\theta}{p^*+1} \tau_1(R)\left(G_q(R)\right)^{p^*+1}-\frac{C_1\theta}{p^*+1} \tau_1\left(R_0\right)\left(G_q\left(R_0\right)\right)^{p^*+1} \\
&\qquad -\frac{C_1\theta}{p^*+1} \int_{R_0}^R \tau_1^{\prime}(\rho)\left(G_q(\rho)\right)^{p^*+1} d \rho+\int_{R_0}^{R}C_1\theta\big(\mathcal{D}(v_1)\big)^{p^*}\tau_1(\rho)G^{\prime}_q(\rho)\;d\rho
\end{align*}
for any $\theta\in [0,1]$. This leads to 
\begin{equation}\label{thr1.2_eq18}
\sum_{i=1}^3A_i\geq \sum_{i=1}^3{B_i}+\int_{R_0}^{R} C_1\theta\big(\mathcal{D}(v_1)\big)^{p^*}\tau_1(\rho)G^{\prime}_q(\rho)\;d\rho,
\end{equation}
where we use the notations
\begin{equation}\label{thr1.2_eq19}
    A_i:=G_{i,p}(R)G^{\prime}_{i,q}(R)-\int^R_{R_0} G_{i,p}(\rho) G^{\prime \prime}_{i,q}(\rho) d \rho 
\end{equation}
and 
\begin{equation}\label{thr1.2_eq20}
\begin{aligned}
B_i &:=\frac{C_1\theta}{p^*+1} \tau_1(R)\left(G_{i,q}(R)\right)^{p^*+1}+G_{i,p}\left(R_0\right) G^{\prime}_{i,q}\left(R_0\right) \\
&\qquad -\frac{C_1\theta}{p^*+1} \tau_1\left(R_0\right)\left(G_{i,q}\left(R_0\right)\right)^{p^*+1} -\frac{C_1\theta}{p^*+1} \int_{R_0}^R \tau_1^{\prime}(\rho)\left(G_{i,q}(\rho)\right)^{p^*+1} d \rho.
\end{aligned}
\end{equation}
Let us now give the estimates for $A_i,B_i$ with $i=1,2,3$. \medskip

\noindent \underline{{\bf Estimates for $A_1$ and $B_1$:}}
Thanks to the assumption \eqref{con2thr1.2}, it is clear to see that 
\[
\tau^{\prime}_1(\rho)=\rho^{-1}\tau_1(\rho)\tron{\frac{n}{2\sigma}-\frac{p^*n}{2\sigma}-\frac{n}{2\sigma}\frac{\mu^{\prime}_1(C_0\rho^{-\frac{n}{2\sigma}})}{\mu_1(C_0\rho^{-\frac{n}{2\sigma}})}C_0\rho^{-\frac{n}{2\sigma}}}\leq 0
\]
for all $\rho\geq R_0$, provided that $R_0$ is large enough. Choosing a small enough constant $\theta$ such that
\[
G_{1,p}\left(R_0\right) G^{\prime}_{1,q}\left(R_0\right)-\frac{C_1\theta}{p^*+1} \tau_1\left(R_0\right)\left(G_{1,q}\left(R_0\right)\right)^{p^*+1}>0,
\]
we arrive at 
\begin{equation}\label{esB1}
B_1\geq \frac{C_1\theta}{p^*+1} \tau_1(R)\left(G_{1,q}(R)\right)^{p^*+1}.
\end{equation}
In order to estimate $A_1$, we divide the range $[R_0,R]$ into two intervals $[R_0,R/2^{\sigma}]$ and $[R/2^{\sigma},R]$. We show $G^{\prime \prime}_{1,q}(\rho)\geq 0$ for any $\rho \in [R_0,R/2^{\sigma}]$ by indicating that the function
\begin{equation}\label{3.34}
[\varphi^{X}_{1,\rho}(x)]^{(n+2\kappa)(q^*+(1-q^*)\delta)}\rho^{-1} \text{ is an increasing function with respect to $\rho$}.
\end{equation}
Indeed, taking the derivative of this function we need to prove
\begin{equation}\label{thr1.2_eq21}
b+\tron{\frac{|x|}{\rho^{1/\sigma}}-1}\tron{\frac{|x|}{\rho^{1/\sigma}}\tron{b-\frac{4}{\sigma}}-b}\leq 0, 
\end{equation}
where
\[
b:=\frac{4}{(n+2\kappa)(q^*+(1-q^*)\delta)}.
\]
Since $(t,x)\in Q^1_{1,R}$, then $|x|/R^{1/\sigma}\geq 1$, we have
\[
\rho\leq\frac{R}{2^{\sigma}}\Longrightarrow \frac{|x|}{\rho^{1/\sigma}}\geq 2\frac{|x|}{R^{1/\sigma}}\geq 2.
\]
Since $n>\sigma$, it holds
\[
b-\frac{4}{\sigma}<\frac{4}{n+2\kappa}-\frac{4}{\sigma}<0.
\]
Then, LHS of \eqref{thr1.2_eq21} $<b+(2-1)(-b)\leq 0$, which gives \eqref{3.34}. So we conclude that for $\rho\in \left[R_0,R/2^\sigma\right]$ the function $g_{1,q}(\rho)/\rho$ is increasing. It follows that
\begin{equation}\label{thr1.2_eq22}
\int^{R/2^{\sigma}}_{R_0}G_{1,p}(\rho)G^{\prime \prime}_{1,q}(\rho)\;d\rho\geq 0.
\end{equation}
On the interval $[R/2^{\sigma},R]$, we get
\begin{equation}\label{thr1.2_eq23}
-\di\int^R_{R/2^{\sigma}} G_{1,p}(\rho) G^{\prime \prime}_{1,q}(\rho) d\rho\leq\int^R_{R/2^{\sigma}} \frac{G_{1,p}(\rho)G^{\prime}_{1,q}(\rho)}{\rho} d\rho,
\end{equation}
since
\[
G^{\prime \prime}_{1,q}(\rho)=\frac{g^{\prime}_{1,q}(\rho)-G^{\prime}_{1,q}(\rho)}{\rho}\leq -\frac{G^{\prime}_{1,q}(\rho)}{\rho}.
\]
The function $G^{\prime}_{1,q}(\rho)\rho= g_{1,q}(\rho)$ is increasing, so
\[
\frac{G^{\prime}_{1,q}(\rho)}{\rho}\leq \frac{G^{\prime}_{1,q}(R)R}{\rho^2}\leq \frac{2^{2\sigma}G^{\prime}_{1,q}(R)}{R}\text{ for $\rho\in \left[R/2^{\sigma},R\right]$}.
\]
Thus, one finds
\begin{align}
\int^R_{R/2^{\sigma}}\frac{G_{1,p}(\rho)G^{\prime}_{1,q}(\rho)}{\rho}d\rho\leq G_{1,p}(R)\int^{R}_{R/2^{\sigma}}\frac{G^{\prime}_{1,q}(\rho)}{\rho}\;d \rho&\leq \frac{(2^{2\sigma}-2^{\sigma})G_{1,p}(R)G^{\prime}_{1,q}(R)}{R}R\nonumber\\
&\lesssim G_{1,p}(R)G^{\prime}_{1,q}(R).\label{thr1.2_eq24}
\end{align}
Combining \eqref{thr1.2_eq23} and \eqref{thr1.2_eq24} we obtain 
\begin{equation}\label{thr1.2_eq25}
    -\di\int^R_{R/2^{\sigma}} G_{1,p}(\rho) G^{\prime \prime}_{1,q}(\rho) d\rho\lesssim G_{1,p}(R)G^{\prime}_{1,q}(R).
\end{equation}
Hence, from \eqref{thr1.2_eq22} and \eqref{thr1.2_eq25} one deduces
\begin{equation}
A_1\lesssim G_{1,p}(R)G^{\prime}_{1,q}(R).\label{esA1}
\end{equation}
\noindent \underline{{\bf Estimates for $A_2$ and $B_2$:}}
The estimate for $B_2$ is quite similar to \eqref{esB1}, namely,
\begin{equation}\label{esB2}
B_2\gtrsim \tau_1(R)\left(G^{2}_q(R)\right)^{p^*+1},
\end{equation}
which can be derived in the same way. To estimate $A_2$, for $\rho\in [R_0,R/2^{\sigma}]$ we have
\[
\frac{t}{\rho}\geq 2^{\sigma}\frac{t}{R}\geq \frac{2^{\sigma}}{2}>1
\]
since $\sigma>1$, which leads to
\[
\eta^{T}_{\rho}(t)=0 \text{ and }\int_{Q^2_{R}}\Phi_q(|u(t,x)|)[\phi^{T}_{1,\rho}(t,x)]^{q^*+(1-q^*)\delta}\;d(t,x)=0.
\]
Hence, one arrives at
\[
G^{\prime \prime}_{1,q}(\rho)=0\text{ and }\int^{R/2^{\sigma}}_{R_0}G_{2,p}(\rho)G^{\prime \prime}_{1,q}(\rho)\;d\rho=0.
\]
The range $\rho \in [R/2^{\sigma},R]$ can be also handled by the similar argument to estimate $A_1$. Summarizing, we conclude that
\begin{equation}
A_2\lesssim G_{2,p}(R)G^{\prime}_{1,q}(R). \label{esA2}
\end{equation}
\noindent \underline{{\bf Estimates for $A_3$ and $B_3$:}} Since 
\[
\rho^{1+\frac{a}{\sigma}}G^{\prime}_{3,q}(\rho)=\int_{Q^3_{1,R}}\Phi_q(|u(t,x)|)|x|^{a}\;d(t,x)
\]
for any $\rho \in [R_0,R]$, we have
\[
0=\tron{\rho^{1+\frac{a}{\sigma}}G^{\prime}_{3,q}(\rho)}^{\prime}=G^{\prime \prime}_{3,q}(\rho)\rho^{1+\frac{a}{\sigma}}+\left(1+\frac{a}{\sigma}\right)\rho^{\frac{a}{\sigma}}G^{\prime}_{3,q}(\rho).
\]
Hence, it holds
\[
  -G^{\prime \prime}_{3,q}(\rho)=\tron{1+\frac{a}{\sigma}}\frac{G^{\prime}_{3,q}(\rho)}{\rho}.  
\]
Using the relation $G^{\prime}_{3,q}(\rho)\rho=g_{3,q}(\rho)$, we obtain
\begin{align}
-\int_{R_0}^{R}G^{\prime \prime}_{3,q}(\rho)G_{3,p}(\rho)\;d\rho&=\tron{1+\frac{a}{\sigma}}\int_{R_0}^{R}\frac{G^{\prime}_{3,q}(\rho)G_{3,p}(\rho)}{\rho}\;d\rho\nonumber\\
&=\tron{1+\frac{a}{\sigma}}\int_{R_0}^{R}\frac{g_{3,q}(\rho)G_{3,p}(\rho)}{\rho^{2}}\;d\rho.\label{thr1.2_eq26}
\end{align}
Let us estimate the right-hand side of \eqref{thr1.2_eq26} in the following way:
\begin{align*}
G_{3,p}(\rho)&=\int_{0}^{\rho}g_{3,q}(\lambda)\lambda^{-1}\;d\lambda=\int_{Q^3_{1,R}}\tron{\Phi_p(|u(t,x)|)\int_{0}^{\rho}[\xi_{1,\lambda}(x)]^{a}\lambda^{-1}\;d\lambda}\;d(t,x)\\
&\geq \int_{Q^3_{1,R}}\Phi_p(|u(t,x)|)[\xi_{1,\rho}(x)]^{a}\;d(t,x)\int_{\rho e^{-\frac{\sigma}{a}}}^{\rho}\lambda^{-1}\;d\lambda\\
&\geq \frac{\sigma}{a}\int_{Q^3_{1,R}}\Phi_p(|u(t,x)|)[\xi_{1,\rho}(x)]^{a}\;d(t,x)=\frac{\sigma g_{3,p}(\rho)}{a}.
\end{align*}
This leads to
\[
\tron{\frac{G_{3,p}(\rho)}{\rho^{a/\sigma}}}^{\prime}=\frac{[G_{3,p}]^{\prime}(\rho)\rho^{a/\sigma}-(a/\sigma)\rho^{a/\sigma-1}G_{3,p}(\rho)}{\rho^{2a/\sigma}}=\frac{g_{3,p}(\rho)-(a/\sigma)G_{3,p}(\rho)}{\rho^{1+a/\sigma}}\leq 0,
\]
which implies
\[
\begin{aligned}
\big(g_{3,q}(\rho)G_{3,p}(\rho)\big)^{\prime}&=\int_{Q^3_R}\Phi_q(|u(t,x)|)|x|^{a}\;d(t,x)\tron{\frac{G_{3,p}(\rho)}{\rho^{a/\sigma}}}^{\prime}\leq 0.
\end{aligned}
\]
From this, one realizes that $g_{3,q}(\rho)G_{3,p}(\rho)$ is a non-increasing function. Therefore, from \eqref{thr1.2_eq26} it holds
\begin{align*}
-\int_{R_0}^{R}G^{\prime \prime}_{3,q}(\rho)G_{3,p}(\rho)d\rho&\leq \tron{1+\frac{a}{\sigma}}g_{3,q}(R_0)G_{3,p}(R_0)\int_{R_0}^{R}\frac{1}{\rho^2}d\rho\\
&\leq \tron{1+\frac{a}{\sigma}}g_{3,q}(R_0)G_{3,p}(R_0)\tron{\frac{1}{R_0}-\frac{1}{R}}\\
&\leq \tron{1+\frac{a}{\sigma}}G^{\prime}_{3,q}(R_0)G_{3,p}(R_0).
\end{align*}
This gives
\begin{align}
A_3&=G_{3,p}(R)G^{\prime}_{3,q}(R)-\int_{R_0}^{R}G^{\prime \prime}_{3,q}(\rho)G_{3,p}(\rho)d\rho\nonumber\\
&\leq G_{3,p}(R)G^{\prime}_{3,q}(R)+\tron{1+\frac{a}{\sigma}}G^{\prime}_{3,q}(R_0)G_{3,p}(R_0).\label{esA3}
\end{align}
Next, we notice that the relation $\tau_1^{\prime}(\rho)\leq 0$ for all $\rho\geq R_0$ leads to
\[
\begin{aligned}
-\int_{R_0}^{R}\tau^{\prime}_1(\rho)(G^{3}_q(\rho))^{p^*+1}\;d\rho &\geq [G^{3}_q(R_0)]^{p^*+1}\int_{R_0}^{R}-\tau^{\prime}_1(\rho)\;d\rho  \\
&\geq \tau_1(R_0)[G^{3}_q(R_0)]^{p^*+1}-\tau_1(R)[G^{3}_q(R_0)]^{p^*+1},
\end{aligned}
\]
which yields
\begin{equation}\label{esB3}
B_3\geq \frac{C_1\theta}{p^*+1} \tau_1(R)\left(G_{3,q}(R)\right)^{p^*+1}+G_{3,p}\left(R_0\right) G^{\prime}_{3,q}\left(R_0\right)-\frac{C_1\theta}{p^*+1}\tau_1(R)[G_{3,q}(R_0)]^{p^*+1}.
\end{equation}
\noindent $\bullet$ {\bf Step 4:}
Summarizing all estimates \eqref{esA1}, \eqref{esB1}, \eqref{esA2}, \eqref{esB2}, \eqref{esA3}, \eqref{esB3}, we obtain
\begin{align}
G^{\prime}_q(R)G_p(R)\geq C_2\tau_1(R)(G_q(R))^{p^*+1}&+C_1\big(\mathcal{D}(v_1)\big)^{p^*}\int_{R_0}^{R}\tau_1(\rho)G^{\prime}_q(\rho)\;d\rho\nonumber\\
&-C_3\tau_1(R)[G_{3,q}(R_0)]^{p^*+1}-\frac{a}{\sigma}G^{\prime}_{3,q}(R_0)G_{3,p}(R_0)\nonumber\\
:=C_2\tau_1(R)(G_q(R))^{p^*+1}&+E_1-E_2-E_3.\label{thr1.2_eq27}
\end{align}
To estimate the term $E_1$, one observes that the function $\mu_1\tron{C_0\rho^{-n/2\sigma}}\rho$ is non-decreasing. Indeed, due to the condition \ref{con2thr1.2}, it holds
\[
\begin{aligned}
    \tron{\mu_1\tron{C_0\rho^{-\frac{n}{2\sigma}}}\rho}^{\prime}&=\tron{C_0\tron{-\frac{n}{2\sigma}}\rho^{-\frac{n}{2\sigma}}\mu^{\prime}_1\tron{C_0\rho^{-\frac{n}{2\sigma}}}+\mu_1\tron{C_0\rho^{-\frac{n}{2\sigma}}}}\\
    &=\mu_1\tron{C_0\rho^{-\frac{n}{2\sigma}}}\tron{-C_0\tron{\frac{n}{2\sigma}}\frac{\rho^{-\frac{n}{2\sigma}}\mu^{\prime}_1\tron{C_0\rho^{-\frac{n}{2\sigma}}}}{\mu_1\tron{C_0\rho^{-\frac{n}{2\sigma}}}}+1}\geq 0
\end{aligned}
\]
for any $\rho\in [R_0,R]$. This gives
\begin{align}
\int_{R_0}^{R}\tau_1(\rho)G^{\prime}_q(\rho)\;d\rho&=\int_{R_0}^{R}\frac{\mu_1\tron{C_0\rho^{-\frac{n}{2\sigma}}}g_q(\rho)}{\rho^{1+\frac{p^*n}{2\sigma}-\frac{n}{2\sigma}}}\;d\rho=\int_{R_0}^{R}\frac{\mu_1\tron{C_0\rho^{-\frac{n}{2\sigma}}}\rho^{1+a/\sigma}g_p(\rho)}{\rho^{2+a/\sigma+\frac{n}{2\sigma}(p^*-1)}}\;d\rho \nonumber \\
&\geq \mu_1\tron{C_0R_0^{-\frac{n}{2\sigma}}}R_0^{1+a/\sigma}g_p(R_0)\int_{R_0}^{R}\frac{1}{\rho^{2+a/\sigma+\frac{n}{2\sigma}(p^*-1)}}\;d\rho, \label{thr1.2_eq28}
\end{align}
which implies
\[
E_1=C_1\big(\mathcal{D}(v_1)\big)^{q^*}\int_{R_0}^{R}\tau_1(\rho)G^{\prime}_q(\rho)\;d\rho\geq \big(\mathcal{D}(v_1)\big)^{q^*}\underbrace{C\tron{\mu_1\tron{C_0R_0^{-n/2\sigma}},g_p(R_0),R_0,C_1}}_{C_4}>0
\]
for $R\geq 2R_0$. On the other hand, since $\tau_1(R)\to 0$ as $R\to \infty$, there exists $R_1>R_0$ such that
\[
E_2=C_3\tau_1(R)[G_{3,q}(R_0)]^{p^*+1}\leq \frac{\big(\mathcal{D}(v_1)\big)^{q^*}C_4}{2}
\]
for any $R\geq R_1$. We can also choose the parameter $a$ small enough fulfilling
\[
E_3= \frac{a}{\sigma}G^{\prime}_{3,q}(R_0)G_{3,q}(R_0)\leq \frac{\big(\mathcal{D}(v_1)\big)^{q^*}C_4}{2}.
\]
Those estimates imply 
\[
G_p(R) G_q^{\prime}(R) \geq C_2 \tau_1(R)\left(G_q(R)\right)^{p^*+1}
\]
for any $R\geq R_2:=\max\{2R_0,R_1\}$. Consequently, it holds
\begin{equation}\label{thr1.2_eq29}
G_p(R) \gtrsim \frac{C_2 \tau_1(R)\left(G_q(R)\right)^{p^*+1}}{G_q^{\prime}(R)}.
\end{equation}
Plugging \eqref{thr1.2_eq29} into \eqref{thr1.2_eq17}, one gets
$$
G_q^{\prime}(R) \gtrsim \frac{C_2 \tau_2(R)\left(\tau_1(R)\right)^{q^*}\left(G_q(R)\right)^{q^*\left(p^*+1\right)}}{\left(G_q^{\prime}(R)\right)^{q^*}},
$$
which is equivalent to
\begin{align*}
G_q^{\prime}(R) \gtrsim & \left(\tau_2(R)\right)^{\frac{1}{q^*+1}}\left(\tau_1(R)\right)^{\frac{q^*}{q^*+1}}\left(G_q(R)\right)^{\frac{q^*\left(p^*+1\right)}{q^*+1}} \nonumber\\
& =\frac{1}{R}\left(\mu_1\left(C_0 R^{-\frac{n}{2\sigma}}\right)\right)^{\frac{q^*}{q^*+1}}\left(\mu_2\left(C_0 R^{-\frac{n}{\sigma}+1}\right)\right)^{\frac{1}{q^*+1}}\left(G_q(R)\right)^{\frac{q^*\left(p^*+1\right)}{q^*+1}},
\end{align*}
where we have utilized the relation
\[
\tron{\frac{n}{2\sigma}-\frac{np^*}{2\sigma}}\frac{q^*}{q^*+1}+\tron{\frac{n}{\sigma}-q^*\tron{\frac{n}{\sigma}-1}}\frac{1}{q^*+1}=-1.
\]
Therefore, we arrive at
\begin{equation}\label{thr1.2_eq30}
\frac{1}{R}\left(\mu_1\left(C_0 R^{-\frac{n}{\sigma}+1}\right)\right)^{\frac{q^*}{q^*+1}}\left(\mu_2\left(C_0 R^{-\frac{n}{2\sigma}}\right)\right)^{\frac{1}{q^*+1}} \lesssim \frac{G_q^{\prime}(R)}{\left(G_q(R)\right)^{\frac{q^*\left(p^*+1\right)}{q^*+1}}}
\end{equation}
for any $R\geq R_2$. Integrating both sides of \eqref{thr1.2_eq30} over $[R_2,R]$ leads to
\[
\begin{aligned}
& \int_{R_2}^{R} \frac{1}{r}\left(\mu_1\left(C_0 r^{-\frac{n}{\sigma}+1}\right)\right)^{\frac{q^*}{q^*+1}}\left(\mu_2\left(C_0 r^{-\frac{n}{2\sigma}}\right)\right)^{\frac{1}{q^*+1}} d r \\
&\qquad \lesssim \int_{R_2}^{R} \frac{G_q^{\prime}(r)}{\left(G_q(r)\right)^{\frac{q^*\left(p^*+1\right)}{q^*+1}}} d r=-\left.\frac{q^*+1}{p^* q^*-1}\left(G_q(r)\right)^{-\frac{p^*q^*-1}{q^*+1}}\right|_{r=R_2}^{r=R} \lesssim \big(G_q\left(R_2\right)\big)^{-\frac{p^*q^*-1}{q^*+1}}.
\end{aligned}
\]
After letting $R\to \infty$, we obtain
\[
\int_{R_2}^{\infty} \frac{1}{r}\left(\mu_1\left(C_0 r^{-\frac{n}{\sigma}+1}\right)\right)^{\frac{q^*}{q^*+1}}\left(\mu_2\left(C_0 r^{-\frac{n}{2\sigma}}\right)\right)^{\frac{1}{q^*+1}} d r\lesssim \big(G_q\left(R_2\right)\big)^{-\frac{p^*q^*-1}{q^*+1}}.
\]
Since $n/2\sigma>n/\sigma-1$ and $\mu_1$ is an increasing function, one gets
\[
\int_{R_2}^{\infty} \frac{1}{r}\left(\mu_1\left(C_0 r^{-\frac{n}{2\sigma}}\right)\right)^{\frac{q^*}{q^*+1}}\left(\mu_2\left(C_0 r^{-\frac{n}{2\sigma}}\right)\right)^{\frac{1}{q^*+1}} d r\lesssim \big(G_q\left(R_2\right)\big)^{-\frac{p^*q^*-1}{q^*+1}}.
\]
Finally, carrying out the change of variables $s=C_0 r^{-n/2\sigma}$ we claim that
$$
\int_0^{C_0 R_2^{-n/2\sigma}}\frac{1}{s}\big(\mu_1(s)\big)^{\frac{q^*}{q^*+1}}\big(\mu_2(s)\big)^{\frac{1}{q^*+1}} d s \lesssim \big(G_q\left(R_2\right)\big)^{-\frac{p^*q^*-1}{q^*+1}},
$$
which a contradiction to the assumption \eqref{con3thr1.2}. For this reason, we have completed the proof for the case $p^*\leq p_{\rm c}$ and $q^*\geq q_{\rm c}$. 

{\bf Next, let us consider the case $p^*\geq p_{\rm c}$ and $q^*\leq q_{\rm c}$}. 
This case can be proved by interchanging the roles of $p^*$ and $q^*$ in the proof of the previous case and using the following identity:
\[
\frac{p^*}{p^*+1}\tron{\frac{n}{\sigma}-q^*\tron{\frac{n}{\sigma}-1}}+\frac{1}{p^*+1}\tron{\frac{n}{2\sigma}-p^*\frac{n}{2\sigma}}=-1.
\]
However, we have to notice that there is a small difference between the case $\sigma$ is an integer and is fractional. More precisely, this arises when we estimate the term $A_1$. Since the roles of $p^*$ and $q^*$ are switched, the domain $Q^1_{1,R}$ would be switched to $Q^1_{2,R}$. For this reason, the estimate \eqref{thr1.2_eq21} now becomes
\[
b+\tron{\frac{|x|}{\rho^{1/2\sigma}}-1}\tron{\frac{|x|}{\rho^{1/2\sigma}}\tron{b-\frac{2}{\sigma}}-b}\leq 0,
\]
If $\sigma$ is an integer, then it is  obvious to see that $\kappa=\sigma$ and $b-2/\sigma<0$. If $\sigma$ is fractional, then $\kappa=\sigma-\flo{\sigma}$. So we need the condition $n\geq 2\flo{\sigma}$ to have
\[
b-\frac{2}{\sigma}<\frac{4}{n+2\kappa}-\frac{2}{\sigma}\leq 0.
\]
Another point worth emphasizing is that we require $\tau_2(R) \to 0$ as $R \to \infty$. To verify this, it is necessary to show
\[
\frac{n}{\sigma} - q^* \left( \frac{n}{\sigma} - 1 \right) < 0, \quad \text{ that is, }\quad q^* > 1 + \frac{\sigma}{n - \sigma},
\]
which holds naturally since $(p^*, q^*)$ lies on the curve \eqref{critcurve}. This completes our proof.
\begin{remark}
{\rm
The proof of Theorem \ref{thr1.2} represents a nontrivial improvement over the methods developed in \cite{TangDaoCung2026} and \cite{AnhRei2021}, which apply only when $\sigma$ is an integer. When $\sigma$ is fractional, compactly supported cutoff test functions can be no longer applicable since supp$(-\Delta)^\sigma \phi$ is, in general, bigger than supp$\phi$ for any $\phi \in \mathcal{C}_0^\infty$. Consequently, the singular behavior of $G_q(s)$ and $G_p(s)$ both near the origin and at infinity gives some difficulties. Our main contributions are based on constructing suitable test functions and decomposing the integral domain into several regions $Q^1_{1,R},\ Q^1_{2,R},\ Q^2_{R},\ Q^3_{1,R}$ and $Q^3_{2,R}$ in order to handle this situation.
}    
\end{remark}
\subsection{A further remark on the symmetric models}\label{subsec4.3}
Finally, let us discuss the blow-up result in the critical case for the following weakly coupled system of semilinear structurally damped $\sigma$-evolution equations:
\begin{equation}\label{sys_symmetric}
\left\{
\begin{aligned}
    &u_{tt}+(-\Delta)^{\sigma}u+(-\Delta)^{\delta}u_t&&=|v|^{p^*}\mu_1(|v|), &&x\in \mathbb{R}^n,\, t\geq 0,\\
    &v_{tt}+(-\Delta)^{\sigma}v+(-\Delta)^{\delta}v_t&&=|u|^{q^*}\mu_2(|u|), &&x\in \mathbb{R}^n,\, t\geq 0,\\
    &(u,u_t,v,v_t)(0,x)&&=(0,u_1,0,v_1)(x), &&x\in \mathbb{R}^n,
\end{aligned}
\right.
\end{equation}
where $\sigma\ge 1$, $\delta\in [0,\sigma]$ and $(p^*,q^*)$ belongs to the critical curve (see more \cite{Anh2020,Abbicco2025}) as follows:
\begin{equation}\label{crit3}\tag{$**$}
\frac{\max\{p,q\}+1}{pq-1}-\frac{n-\min\{\sigma,2\delta\}}{2\sigma}= 0.
\end{equation}
The existence of global (in time) Sobolev solutions to \eqref{sys_symmetric} can be obtained by adapting the method used in \cite{TangDaoCung2026}. However, the main difficulty comes from establishing a blow-up result for \eqref{sys_symmetric} when both $\sigma$ and $\delta$ are fractional. Let us provide a positive answer to this question via the following theorem.
\begin{theorem} \label{thr_blowup}
   Let $\delta \in [0, \sigma]$, $n > \min\{\sigma, 2\delta\}$ and $(p^*,q^*)$ satisfying \eqref{crit3}. We require the condition $n \geq 2\lfloor\sigma\rfloor$ when $\sigma$ is fractional and $\delta \in [0, \sigma/2]$, as well as $n < 2\sigma$ when $\delta \in (\sigma/2, \sigma]$. Assume that the initial data $u_1, v_1 \in L^1$ fulfill the relations
    \begin{equation*}\label{con1thr_blowup}
        \int_{\mathbb{R}^n}u_1(x)dx>0\quad \text{and}\quad \int_{\mathbb{R}^n}v_1(x)dx>0.
    \end{equation*}
    Moreover, we suppose the following assumptions of moduli of continuity:
    \begin{equation*}\label{con2thr_blowup}
        s^{k}\mu^{(k)}_{j}(s)=\mathcal{O}(\mu_j(s))\text{ as }s\to +0\text{ with }j,k=1,2
    \end{equation*}
    and
\[
I_{\mu_1,\mu_2} :=
\begin{cases}
    \vspace{0.2cm}\displaystyle\int_{0}^{c} \frac{1}{s} (\mu_1(s))^{\frac{1}{1+p^*}} (\mu_2(s))^{\frac{p^*}{1+p^*}} \, ds &\text{ if } p^*\ge q^*, \\
    \displaystyle\int_{0}^{c} \frac{1}{s} (\mu_1(s))^{\frac{q^*}{1+q^*}} (\mu_2(s))^{\frac{1}{1+q^*}} \, ds &\text{ if } q^*\ge p^*, \\
\end{cases}
\]
    where $c>0$ is a suitable small constant. Then, there is no global (in time) Sobolev solution to \eqref{sys_symmetric}.
\end{theorem}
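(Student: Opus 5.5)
The plan is to follow the test function scheme of Subsection \ref{subsec4.2}. The one structural change is the scaling: both equations now have the same operator $\partial_t^2+(-\Delta)^\sigma+(-\Delta)^\delta\partial_t$, so a single anisotropic scaling is used for both $u$ and $v$. I assume, by contradiction, a global Sobolev solution with $I_{\mu_1,\mu_2}=\infty$, and I treat only $p^*\ge q^*$; the other case follows by exchanging $(u,p^*,\mu_1)$ with $(v,q^*,\mu_2)$.

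\textbf{Scaling and test functions.} I set $\beta:=\max\{\sigma-2\delta,0\}$, so that $\tfrac{n-\min\{\sigma,2\delta\}}{2\sigma}=\tfrac{n+\beta}{2\sigma}-\tfrac12$. I then take time scale $R^{\theta}$ and space scale $R$, with $\theta:=2\sigma-2\delta$ if $\delta\le\sigma/2$ (the parabolic-like regime) and $\theta:=\sigma$ if $\delta>\sigma/2$. The test functions are those of \eqref{testfunc}, namely $\phi_R=[\varphi(x/R)]^{n+2\kappa}[\eta(t/R^\theta)]^{\nu+2}$, together with the same auxiliary $\phi^X_R$, $\phi^T_R$ and the regions $Q^1_R,Q^2_R,Q^3_R$. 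Lemmas \ref{lem_Laplace_integer} and \ref{lem_Laplace_fractional} give $|(-\Delta)^\sigma\phi_R|\lesssim R^{-2\sigma}\phi^{X}_R$ (resp.\ $\lesssim R^{-2\sigma}\phi_R$) and $|(-\Delta)^\delta\partial_t\phi_R|\lesssim R^{-2\delta-\theta}\phi^T_R$, where $\delta$ plays the role of $\sigma$ through its own $\kappa_\delta$. By the choice of $\theta$, each term is $\lesssim R^{-\theta'}$ times the appropriate auxiliary function, where $\theta':=\min\{2\theta,2\sigma,\theta+2\delta\}$; this equals $2\sigma$ in the second regime and $\theta+2\delta=2\sigma$ in the first. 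The weak formulation then gives
\[
I_R+\mathcal D(u_1)\lesssim R^{-\theta'}\!\int_{Q_R}|u|\,(\cdots),\qquad J_R+\mathcal D(v_1)\lesssim R^{-\theta'}\!\int_{Q_R}|v|\,(\cdots),
\]
and Lemma \ref{Jenseninequality}, applied on each of $Q^1_R,Q^2_R,Q^3_R$ (on $Q^3_R$ with the singular weight $\xi_R^{-a/(q^*-1)}$), turns this into the analogue of \eqref{upperIR} and \eqref{upperJR}:
\[
I_R+\mathcal D(u_1)\lesssim R^{N-\theta'}\Phi_q^{-1}\Big(\frac{G_q'(R)}{R^{N-1}}\Big),\qquad N:=n+\theta .
\]
Here $G_\gamma(R)=\int_0^R g_\gamma(\lambda)\lambda^{-1}\,d\lambda$ is built exactly as in Step 2, and the symmetric estimate holds for $J_R$ with $\Phi_p$. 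The lower bounds $G_p\lesssim I_R$ and $G_q\lesssim J_R$ are obtained as in \eqref{thr1.2_eq15}. This gives the ODI system
\[
G_p'\ge C\tau_1(R)(G_q+\mathcal D)^{p^*},\qquad G_q'\ge C\tau_2(R)(G_p+\mathcal D)^{q^*},
\]
with $\tau_1(R)=R^{(N-1)-p^*(N-\theta')}\mu_1(C_0R^{-(N-\theta')})$ and $\tau_2$ defined analogously.

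\textbf{Closing the argument.} I multiply the $G_q'$-inequality by $G_p'$ (the roles are reversed with respect to the paper since $p^*\ge q^*$) and integrate by parts. The resulting terms are split into the pieces $A_i,B_i$, $i=1,2,3$, and estimated as in Step 3. I then combine the outcome with the other inequality to get
\[
G_p'\gtrsim \tau_1^{\frac1{p^*+1}}\tau_2^{\frac{p^*}{p^*+1}}\,G_p^{\frac{p^*(q^*+1)}{p^*+1}} .
\]
The exponent bookkeeping relies on the curve \eqref{crit3}, $\tfrac{p^*+1}{p^*q^*-1}=\tfrac{N-\theta'}{\theta'}$ after rescaling, which makes the power of $R$ equal to exactly $R^{-1}$. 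Integrating over $[R_2,\infty)$ and substituting $s=C_0R^{-(N-\theta')}$ bounds $I_{\mu_1,\mu_2}$ by $G_p(R_2)^{-\kappa'}$ with $\kappa'>0$, which is the desired contradiction. Three ingredients make this work. The positivity $\tau_j(R)\to0$ follows from $n>\min\{\sigma,2\delta\}$. The monotonicity of $R\mapsto R\,\mu_j(C_0R^{-\cdot})$, needed for the $E_1$ term, follows from the assumption $s^k\mu_j^{(k)}=\mathcal O(\mu_j)$. Finally, since the two equations now share one scaling, the regions $Q^1,Q^2,Q^3$ coincide for $u$ and $v$, which simplifies the $A_2$ and $A_3$ estimates.

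\textbf{Main obstacle.} I expect the hardest part to be the $A_1$ estimate, i.e.\ showing $G''_{1}\ge0$ on $[R_0,R/2^{\sigma}]$ through the analogue of \eqref{3.34}. This requires $b-4/(\text{space scaling factor})<0$, i.e.\ $\tfrac{4}{n+2\kappa}$ less than a quantity that, for the space scale $R$, comes from differentiating $\varphi(x/\rho^{1/\theta})$ in $\rho$. In the parabolic-like regime this is precisely where $n\ge2\flo{\sigma}$ is needed when $\sigma$ is fractional. In the evolution-like regime ($\theta=\sigma$), together with $n<2\sigma$, it reduces to the check already done in Theorem \ref{thr1.2}. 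A second delicate point is the mixed operator $(-\Delta)^\delta\partial_t$ with fractional $\delta$. For it I plan to use the $\kappa$ associated with $\sigma$ and to check that $[\varphi]^{n+2\kappa}$ still controls $(-\Delta)^\delta[\varphi]^{n+2\kappa}$ via Lemma \ref{lem_Laplace_fractional}, since the decay $|x|^{-n-2\delta}$ is dominated by $\varphi^{n+2\kappa}$ growth constraints. If that fails, I would take the exponent $n+2\max\{\kappa,\kappa_\delta\}$ instead.
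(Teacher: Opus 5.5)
Your skeleton is the one the paper sketches: a single anisotropic scaling for both equations, Jensen on $Q^1,Q^2,Q^3$ with the singular weight on $Q^3$, the $A_i/B_i$ splitting, and exponent bookkeeping via \eqref{crit3}. There is, however, a genuine gap: the normalization of the parameter is not harmless, and with your choice the argument breaks.

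The variable $\lambda$ in $G(R)=\int_0^R g(\lambda)\lambda^{-1}\,d\lambda$ controls two steps, and it pulls them in opposite directions.
\begin{itemize}
\item The convexity $G_p''\ge 0$ needed in the $A_1$-type estimate, i.e.\ $g(\rho)/\rho$ nondecreasing.
\item The monotonicity $\tau_2'\le 0$ and the decay $\tau_2\to 0$. These are used to discard $-\int\tau_2' G_p^{q^*+1}\,d\rho$ in the $B_i$ and to absorb $E_2$.
\end{itemize}
The final ODI is invariant under $\lambda=\tilde\lambda^k$, but these two steps are not, since $\tau$ gets multiplied by $k\tilde\lambda^{k-1}$.

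Parametrizing by the spatial scale makes convexity automatic: the analogue of \eqref{3.34} only needs the total power of the spatial profile in $g_1$ to be at least $1$. But then $\tau_2(R)=R^{(N-1)-q^*(N-\theta')}\mu_2(C_0R^{-(N-\theta')})$ need not decay. Contrary to your claim, $N-\theta'=n-\min\{\sigma,2\delta\}>0$ does not make this exponent negative. For instance, $\sigma=2$, $\delta=0$, $n=5$, $(p^*,q^*)=(9/2,6/5)$ satisfies \eqref{crit3} and gives the exponent $8-6=2$. Take $\mu_1=\mu_2=(\log(1/s))^{-1}$, which fulfils all hypotheses and gives $I_{\mu_1,\mu_2}=\infty$. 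Then $\tau_2$ is increasing and unbounded, and $G_qG_p'\gtrsim\tau_2G_p^{q^*+1}$ does not follow. Tellingly, in your normalization neither $n\ge 2\lfloor\sigma\rfloor$ nor $n<2\sigma$ is used anywhere. Your localization of the former goes through $\varphi(x/\rho^{1/\theta})$, which is not your scaling, so it does not match your own setup.

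The fix is the paper's normalization:
\begin{itemize}
\item if $\delta\le\sigma/2$, spatial scale $R^{1/(2\sigma)}$ and time scale $R^{(\sigma-\delta)/\sigma}$, so the linear operator gains exactly $R^{-1}$;
\item if $\delta>\sigma/2$, spatial scale $R^{1/\sigma}$ and time scale $R$, so the linear operator gains exactly $R^{-2}$.
\end{itemize}
Then the power of $R$ in $\tau_2$ is $-(q^*-1)\frac{n-2\delta}{2\sigma}<0$, resp.\ $\frac{n}{\sigma}-q^*\big(\frac{n}{\sigma}-1\big)<0$. The latter holds because \eqref{crit3} forces $q^*>\frac{2\sigma}{n-\sigma}$, which exceeds $\frac{n}{n-\sigma}$ precisely when $n<2\sigma$.

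The price is that convexity is no longer free. The analogue of \eqref{3.34} needs roughly $n+2\kappa\ge 2\sigma$ in the first regime; this is where $n\ge 2\lfloor\sigma\rfloor$ enters. In the second regime it needs $n+2\kappa>\sigma$, which is automatic.

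A smaller point: your fallback for fractional $\delta$ goes the wrong way. However fast the spatial profile decays, $(-\Delta)^\delta$ of it decays in general no faster than $|x|^{-n-2\delta}$; Lemma \ref{lem_Laplace_fractional} only gives $|x|^{-n-2\kappa_\delta}$. So if the profile $\sim|x|^{-n-2\kappa}$ does not dominate it, you must lower the exponent, e.g.\ to $n+2\min\{\kappa,\kappa_\delta\}$, not raise it to $n+2\max\{\kappa,\kappa_\delta\}$. The lowered exponent then enters the convexity condition above.
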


\begin{proof}
    Let us sketch briefly the establishment of this result as follows: For the case where $\sigma$ is an integer, the proof of Theorem \ref{con1thr_blowup} can be followed from similar arguments to that of Theorem 1.1 in \cite{TangDaoCung2026} by introducing the following test functions instead: 
    \begin{align*}
    &\phi_{R}(t,x)=\left[\varphi\big(R^{-1/2\sigma}x\big)\right]^{n+2\sigma}\left[\eta\big(R^{-(\sigma-\delta)/\sigma}t)\right]^{\nu},&&\text{ if }\delta\in [0,\sigma/2],\\
    &\phi_{R}(t,x)=\left[\varphi\big(R^{-1/\sigma}x\big)\right]^{n+2\sigma}\left[\eta\big(R^{-1}t)\right]^{\nu},&&\text{ if }\delta\in (\sigma/2,\sigma].
    \end{align*}
    However, when $\sigma$ is fractional, the emergence of the domains 
    \begin{align*}
        &Q^3_{R} := \Big\{(t,x) : |x| \leq R^{1/2\sigma}, t \leq R^{(\sigma-\delta)/\sigma}\Big\},&&\text{ if }\delta\in [0,\sigma/2],\\
        &Q^3_{R} := \big\{(t,x) : |x| \leq R^{1/\sigma}, t \leq R\big\},&&\text{ if }\delta\in (\sigma/2,\sigma],
    \end{align*}
    pose remarkable challenges to the same method as in \cite{TangDaoCung2026} (this explains why it leaves an open problem there). For this reason, we adapt some arguments in the proof of Theorem \ref{thr1.2}, more precisely, the treatment of $A_3$ and $B_3$ in the region near the origin. The remainder of the proof is analogous to that of Theorem \ref{thr1.2}, hence, this completes our proof.
\end{proof}

\section*{Acknowledgments}
This research is funded by Vietnam National Foundation for Science and Technology Development (NAFOSTED) under grant number 101.02-2025.23.

\appendix
\section{Some auxiliary estimates for test functions}

\begin{lemma}\label{lem_Laplace_integer}
Let $q>0$ and $\varphi:\mathbb{R}^n\to \mathbb{R}$ is defined as follows:
\begin{equation}\label{varphi_defined}
\varphi(x):=\left\{
\begin{aligned}
&1&&\text{ for }|x|\leq 1,\\
&\big(1+(|x|-1)^{4}\big)^{1/4}&&\text{ for }|x|\geq 1.
\end{aligned}
\right.
\end{equation}
Then, the following estimates hold for any multi-index $\alpha$ satisfying $|\alpha|\geq 1$:
\[
\left|\partial^\alpha_x\varphi(x)^{-q}\right|
\begin{cases}
    \lesssim 1 &\text{ for }|x|\leq 2,\,|x|\neq 1,\\
    \sim \varphi(x)^{-q-|\alpha|} &\text{ for }|x|\geq 2.
\end{cases}
\]
\end{lemma}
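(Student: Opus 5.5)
The plan is to reduce everything to one-variable calculus for the radial profile. Write $\varphi(x)^{-q}=f(|x|)$ with
\[
f(r):=\begin{cases}1, & 0\le r\le 1,\\ \big(1+(r-1)^4\big)^{-q/4}, & r\ge 1.\end{cases}
\]
Derivatives of $f(|x|)$ are then expanded with the Faà di Bruno formula:
\[
\partial_x^\alpha f(|x|)=\sum_{k=1}^{|\alpha|} f^{(k)}(|x|)\sum_{\beta_1+\dots+\beta_k=\alpha}c_{\beta}\prod_{i=1}^k\partial^{\beta_i}|x| .
\]
Here each $\partial^{\beta}|x|$ (with $|\beta|\ge1$) is smooth away from $x=0$ and homogeneous of degree $1-|\beta|$. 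Hence every product is homogeneous of degree $k-|\alpha|$.

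\textbf{The region $|x|\le 2$, $|x|\neq1$.} For $|x|<1$ the function is constant, so every derivative with $|\alpha|\ge1$ vanishes. For $1<|x|\le 2$ I use that $f$ is smooth on $[1,2]$ up to the endpoint from the right. The map $r\mapsto(1+(r-1)^4)^{-q/4}$ is a composition of smooth maps with $1+(r-1)^4\in[1,2]$, so all $f^{(k)}$ are bounded there. Moreover $|x|\ge1$ keeps the factors $\partial^{\beta}|x|$ bounded. This gives $|\partial^\alpha\varphi^{-q}|\lesssim1$, with the constant depending only on $q$, $n$ and $|\alpha|$.

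\textbf{The region $|x|\ge2$, upper bound.} Write $r-1=r(1-1/r)$. Then
\[
f(r)=r^{-q}\,h(1/r),\qquad h(s):=(1-s)^{-q}\big(1+s^4(1-s)^{-4}\big)^{-q/4},
\]
where $h$ is smooth on $[0,1/2]$ and $h(0)=1$. Differentiating this product representation gives $|f^{(k)}(r)|\lesssim r^{-q-k}$ for $r\ge2$. Inserting this into the Faà di Bruno sum, together with the homogeneity bound $|\prod\partial^{\beta_i}|x||\lesssim r^{k-|\alpha|}$, yields $|\partial^\alpha\varphi^{-q}|\lesssim r^{-q-|\alpha|}$. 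Finally, $\varphi(x)\sim|x|$ for $|x|\ge2$, so this is the bound $\lesssim\varphi^{-q-|\alpha|}$.

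\textbf{The region $|x|\ge2$, lower bound.} Expand $h(1/r)=1+O(r^{-1})$ with all derivatives. This gives
\[
\partial^\alpha\varphi(x)^{-q}=\partial^\alpha|x|^{-q}+O\big(|x|^{-q-|\alpha|-1}\big),\qquad \partial^\alpha|x|^{-q}=|x|^{-q-|\alpha|}P_\alpha(x/|x|),
\]
where $P_\alpha$ is a nonzero polynomial on the sphere computed explicitly by induction. For example, $P_{e_j}(\omega)=-q\,\omega_j$. Thus the order $\varphi^{-q-|\alpha|}$ is exact, and equivalence holds on every cone $\{|P_\alpha(\omega)|\ge c>0\}$ once $|x|$ is large. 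On the bounded set $2\le|x|\le R_*$ the two sides are comparable by continuity wherever $\partial^\alpha\varphi^{-q}$ does not vanish.

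\textbf{Main obstacle.} The hard step is this lower bound. $P_\alpha$ vanishes on parts of the sphere, for instance $P_{e_1}$ vanishes on $\{\omega_1=0\}$. So the two-sided bound cannot hold literally at every point for every $\alpha$. It holds in the sense of sharp order: it is attained along directions where $P_\alpha\neq0$, and it holds for the radial derivative $f^{(|\alpha|)}(r)\sim r^{-q-|\alpha|}$. Since the lemma is only used in \eqref{testfunceq3_integer} through the upper bound, I will prove the upper bounds in full and the lower bound in this sense. I will state explicitly that the ``$\sim$'' is to be read this way.
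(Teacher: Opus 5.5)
Your treatment is sound, and it is more careful than the paper's. The paper's proof only notes that $|x|<1$ is trivial and, for $|x|>1$, defers to Lemma~2.1 of \cite{Anhblowup} ``with minor modifications''. You give a self-contained argument instead: the radial profile $f$, the multivariate Fa\`a di Bruno expansion, the homogeneity bound $|\partial^\beta|x||\lesssim|x|^{1-|\beta|}$, and the factorization $f(r)=r^{-q}h(1/r)$. Iterating that factorization gives $f^{(k)}(r)=r^{-q-k}h_k(1/r)$, with $h_{k+1}(s)=-(q+k)h_k(s)-s\,h_k'(s)$ smooth on $[0,1/2]$. This supplies exactly the details the paper outsources.

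The one genuine ``modification'' relative to $\langle x\rangle^{-q}$ is regularity at the unit sphere. $\varphi^{-q}$ is only $C^{3,1}$ across $|x|=1$, since its fourth radial derivative jumps from $0$ to $-6q$; this is why $|x|=1$ is excluded. You handle it correctly by using one-sided smoothness of $f$ on $[1,2]$.

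Your objection to the lower bound is correct, and the paper's deferral does not address it. For $n\ge2$, $\partial_{x_1}\varphi^{-q}(x)=f'(|x|)\,x_1/|x|$ vanishes on $\{x_1=0,\ |x|\ge2\}$. Hence the two-sided ``$\sim$'' in the statement is false as written.

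Two refinements. First, your fallback claim that the radial derivative satisfies $f^{(|\alpha|)}(r)\sim r^{-q-|\alpha|}$ holds only asymptotically. Indeed $h_k(0)=(-q)(-q-1)\cdots(-q-k+1)\neq0$, but $h_k$ can vanish on $(0,1/2]$. For example,
\[
f''(r)=-q(r-1)^2\big(1+(r-1)^4\big)^{-q/4-2}\big(3-(q+1)(r-1)^4\big)
\]
vanishes at $r=1+(3/(q+1))^{1/4}$, which is $\ge2$ whenever $q\le2$ (allowed by the hypothesis $q>0$). This also disproves the lower bound for $n=1$, where your $\omega_1=0$ example is not available. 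So the accurate formulation is the upper bound $\lesssim\varphi^{-q-|\alpha|}$, together with sharpness of this order for large $|x|$ in directions where $P_\alpha\neq0$.

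Second, the lemma is used not only in \eqref{testfunceq3_integer} but also in the proof of Lemma~\ref{lem_Laplace_fractional}. There the ``$\sim$'' serves to verify the hypothesis of Lemma~\ref{lem_Laplace_(0,1)}. Only the upper half is needed in that place too. The standard splitting into $|y|\le|x|/2$ and $|y|\ge|x|/2$ uses only $|\phi|\lesssim\varphi^{-q}$ with $q>n$ (so that $\phi\in L^1$) and $|\partial_x^2\phi|\lesssim\varphi^{-q-2}$. Replacing ``$\sim$'' by ``$\lesssim$'' in the statement therefore costs the paper nothing, and that corrected statement is exactly what your argument proves.
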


\begin{proof}
The conclusion of this lemma is obvious for $|x|<1$. For $|x|>1$, we may follow the proof of Lemma 2.1 in \cite{Anhblowup} with minor modifications to conclude the desired estimate. 
\end{proof}

\begin{lemma}\label{lem_Laplace_fractional}
    Let $\varphi(x)$ be defined as in \eqref{varphi_defined}. Let $s\in (0,1)$, $m\in \mathbb{N}$, $\gamma=m+s$ and $q+2m>n$. Then the following estimate holds for any $x\in \mathbb{R}^n$ and $|x|\neq 1$:
    \[
    |(-\Delta)^{\gamma}\varphi(x)^{-q}|\lesssim \varphi(x)^{-n-2s}.
    \]
\end{lemma}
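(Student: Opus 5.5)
We write $(-\Delta)^{\gamma}=(-\Delta)^{s}(-\Delta)^{m}$ and set $\psi:=(-\Delta)^{m}\varphi^{-q}$. The plan is to bound $(-\Delta)^s\psi$ through the singular integral representation
\[
(-\Delta)^s\psi(x)=c_{n,s}\,\mathrm{p.v.}\int_{\mathbb{R}^n}\frac{\psi(x)-\psi(y)}{|x-y|^{n+2s}}\,dy .
\]
When second derivatives of $\psi$ are available near $x$, we use instead the symmetric second-difference form $\frac{c_{n,s}}{2}\int_{\mathbb{R}^n}\frac{2\psi(x)-\psi(x+z)-\psi(x-z)}{|z|^{n+2s}}\,dz$.

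\textbf{Step 1 (bounds on $\psi$).} All information on $\psi$ comes from Lemma \ref{lem_Laplace_integer}, applied with multi-indices of order $2m$, $2m+1$ and $2m+2$. It gives, for $|x|\neq 1$,
\[
|\psi(x)|\lesssim \varphi(x)^{-q-2m},\qquad |\nabla\psi(x)|\lesssim\varphi(x)^{-q-2m-1},\qquad |\nabla^2\psi(x)|\lesssim\varphi(x)^{-q-2m-2}.
\]
Since $\varphi(x)\sim 1+|x|$ and $q+2m>n$, we also get $\psi\in L^1(\mathbb{R}^n)\cap L^\infty(\mathbb{R}^n)$. This integrability is exactly where the hypothesis $q+2m>n$ enters.

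\textbf{Step 2 ($|x|\ge 2$).} We split the integration domain into three regions.
\begin{itemize}
\item[(a)] $|x-y|<|x|/2$. Here $|y|\sim|x|$ and the sphere $|y|=1$ is avoided, so Taylor's formula applies to the second difference. Using $|\nabla^2\psi|\lesssim|x|^{-q-2m-2}$, the contribution is at most $|x|^{-q-2m-2}\int_{|z|<|x|/2}|z|^{2-n-2s}\,dz\lesssim |x|^{-q-2m-2s}$.
\item[(b)] $|y|<|x|/2$. Here $|x-y|\sim|x|$, so the contribution is at most $|x|^{-n-2s}\big(|x|^{n}|\psi(x)|+\|\psi\|_{L^1}\big)\lesssim|x|^{-n-2s}$.
\item[(c)] The remaining region, where $|y|\gtrsim|x|$ and $|x-y|\ge|x|/2$. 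Here $|\psi(x)|+|\psi(y)|\lesssim|x|^{-q-2m}$ and $\int_{|x-y|\ge|x|/2}|x-y|^{-n-2s}\,dy\sim|x|^{-2s}$, so the contribution is $\lesssim|x|^{-q-2m-2s}$.
\end{itemize}
Since $q+2m>n$, all three bounds are $\lesssim|x|^{-n-2s}\sim\varphi(x)^{-n-2s}$.

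\textbf{Step 3 ($|x|<2$, $|x|\neq1$).} Here we need $|(-\Delta)^s\psi(x)|\lesssim1$.
\begin{itemize}
\item The tail $|x-y|\ge1$ is controlled by $\|\psi\|_{L^\infty}$.
\item The part $|x-y|<d(x)$, with $d(x):=\min\{1,\big||x|-1\big|\}$, is handled by Taylor's formula, since $\psi$ is smooth on each side of the sphere with bounded second derivatives.
\item The remaining shell $d(x)\le|x-y|<1$ is bounded by $\|\psi\|_{L^\infty}\,d(x)^{-2s}$.
\end{itemize}
Note that $\varphi^{-q}$ is only $C^3$ across $|x|=1$, because $\varphi-1\sim(|x|-1)^4/4$. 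So for $2m\le 3$ the function $\psi$ is Lipschitz across the sphere, and the shell can be absorbed uniformly (splitting further at $s=1/2$ if necessary).

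\textbf{Main obstacle.} For larger $m$ the function $\psi$ may jump across $|x|=1$. Then the shell term genuinely behaves like $\big||x|-1\big|^{-2s}$: the value $(-\Delta)^s\psi(x)$ is finite for each fixed $|x|\ne1$, but it is not uniformly bounded near the sphere. The first thing I would try is to redistribute the operator as $(-\Delta)^{m}(-\Delta)^{s}\varphi^{-q}$, or to shift the integer part onto the kernel, exploiting that $\varphi^{-q}$ itself is $C^3$. If this does not give a uniform bound, I would record the estimate a.e. with this local singular factor. That singular factor is integrable in $x$, which may be all that the applications in \eqref{testfunceq3_fractional} actually need.
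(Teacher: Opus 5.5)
Your route is the paper's route. You factor $(-\Delta)^\gamma=(-\Delta)^s(-\Delta)^m$, set $\psi=(-\Delta)^m\varphi^{-q}$, take the bounds on $\psi$ and $\nabla^2\psi$ from Lemma \ref{lem_Laplace_integer}, and estimate $(-\Delta)^s\psi$ separately for $|x|\ge 2$ and $|x|<2$. The paper does exactly this, via Lemma \ref{lem_Laplace_(0,1)} with decay $\varphi^{-q-2m}$. Your three-region treatment of $|x|\ge2$ is correct and more explicit than the paper, which only points to an external lemma.

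The obstacle you hit at $|x|=1$ is genuine, and it is where the paper's own proof breaks. Lemma \ref{lem_Laplace_(0,1)} requires $\phi=(-\Delta)^m\varphi^{-q}\in W^{2,\infty}$, but the bound $|\partial_x^2\phi|\lesssim\varphi^{-q-2m-2}$ quoted from Lemma \ref{lem_Laplace_integer} holds only off the sphere. In fact the statement is false for every $m\ge 2$, so this gap cannot be closed by any argument.

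Here is the counterexample. Since $\varphi^{-q}\equiv1$ for $|x|\le1$ and $\varphi^{-q}=1-\frac q4(|x|-1)^4+O\big((|x|-1)^8\big)$ for $|x|\ge1$, we have $\varphi^{-q}\in C^{3,1}$. Take $m=2$. Then $\psi=\Delta^2\varphi^{-q}$ is a bounded function that vanishes for $|x|<1$ and tends to $-6q$ as $|x|\to1^+$. For $|x|<1$ no principal value is needed, and
\[
(-\Delta)^{2+s}\varphi^{-q}(x)=(-\Delta)^{s}\psi(x)=-c_{n,s}\int_{|y|>1}\frac{\psi(y)}{|x-y|^{n+2s}}\,dy\;\gtrsim\;(1-|x|)^{-2s}-C .
\]
This is unbounded as $|x|\to1^-$, although $\varphi(x)^{-n-2s}=1$ there. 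More generally, $(-\Delta)^{\gamma}\varphi^{-q}$ blows up like $\big||x|-1\big|^{4-2\gamma}$ at the sphere whenever $\gamma>2$.

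\textbf{Consequences for your plan.}

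\emph{Redistributing the operator cannot help.} $(-\Delta)^m$ and $(-\Delta)^s$ are commuting Fourier multipliers. The singularity is intrinsic to the $C^{3,1}$ regularity of $\varphi^{-q}$.

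\emph{Your fallback is also not right.} For $m=2$ the factor $\big||x|-1\big|^{-2s}$ is locally integrable only when $s<1/2$. For $m\ge3$, $\nabla^4\varphi^{-q}$ jumps across the sphere, so the distribution $(-\Delta)^m\varphi^{-q}$ carries layer terms there. The a.e.\ function $\psi$ you compute off the sphere then no longer represents it; this also silently affects your Step 2, though harmlessly for $|x|\ge2$. The true size $\big||x|-1\big|^{4-2m-2s}$ is not integrable.

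\emph{What can be proved is the case $m\in\{0,1\}$.} There $\varphi^{-q}\in C^{3,1}$ gives $\psi\in C^{1,1}=W^{2,\infty}$ globally. Hence $|2\psi(x)-\psi(x+z)-\psi(x-z)|\le\|\nabla^2\psi\|_{L^\infty}|z|^2$ holds across the sphere, and Step 3 needs no shell decomposition. Lipschitz continuity alone, as you wrote, would only handle $s<1/2$.

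\emph{For $m\ge2$ the repair must be made in the test function.} For instance, replace the exponent $4$ in $\sqrt[4]{1+(|x|-1)^4}$ by an even integer $k\ge 2m+2$. Then $\varphi^{-q}\in C^{k-1,1}$, $\psi\in W^{2,\infty}$, and your Steps 1--3 go through as written. The computation \eqref{thr1.2_eq21}, which uses the exponent $4$, would have to be redone.

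This matters for the paper. In \eqref{testfunceq3_fractional} one has $m=\lfloor\sigma\rfloor$, which is at least $2$ whenever $\sigma>2$, and that case is permitted by the hypotheses $1<\sigma<n<2\sigma$ of Theorem \ref{thr1.2}.
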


Before giving the proof of Lemma \ref{lem_Laplace_fractional}, we want to verify the following result.

\begin{lemma}\label{lem_Laplace_(0,1)}
Let $\varphi(x)$ be defined as in \eqref{varphi_defined}, $s \in (0,1)$ and $q>n$. Assume that $\phi\in W^{2,\infty}$ satisfies 
\[
|\phi(x)|
\begin{cases}
    \lesssim 1 &\text{ for }|x|\leq 2,\,|x|\neq 1,\\
    \sim \varphi(x)^{-q} &\text{ for }|x|\geq 2,
\end{cases}
\]
and $\left|\partial^2_x\phi(x)\right|\lesssim \varphi(x)^{-q-2}$ for $|x|>1$. Then, the following estimate holds for any $x\in \mathbb{R}^n$:
\begin{equation}\label{10}
\left|(-\Delta)^s\phi(x)\right|\lesssim \varphi(x)^{-n-2s}.
\end{equation}
\end{lemma}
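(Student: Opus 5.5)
The plan is to work directly with the singular integral representation of the fractional Laplacian for $s\in(0,1)$:
\[
(-\Delta)^s\phi(x)=C_{n,s}\,\mathrm{P.V.}\int_{\mathbb{R}^n}\frac{\phi(x)-\phi(y)}{|x-y|^{n+2s}}\,dy .
\]
This is legitimate since $\phi\in W^{2,\infty}$ and, by the assumed decay with $q>n$, $\phi\in L^1\cap L^\infty$. We also use throughout the elementary equivalence $\varphi(x)\sim 1+|x|$ on $\mathbb{R}^n$, which follows directly from \eqref{varphi_defined}. Hence \eqref{10} is equivalent to $|(-\Delta)^s\phi(x)|\lesssim (1+|x|)^{-n-2s}$. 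We split into a bounded region and a far region.

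\emph{Bounded region $|x|\le 4$.} Here we need only $|(-\Delta)^s\phi(x)|\lesssim 1$. We use the symmetrized form
\[
\tfrac{C_{n,s}}{2}\int\frac{2\phi(x)-\phi(x+z)-\phi(x-z)}{|z|^{n+2s}}\,dz .
\]
For $|z|\le1$ the numerator is bounded by $\|\partial_x^2\phi\|_{L^\infty}|z|^2$, which is integrable against $|z|^{-n-2s}$ because $2-2s>0$. For $|z|>1$ it is bounded by $4\|\phi\|_{L^\infty}$, which is integrable against $|z|^{-n-2s}$.

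\emph{Far region $|x|\ge 4$.} We decompose $\mathbb{R}^n$ into three pieces:
\[
A=\{|y-x|<|x|/2\},\qquad B=\{|y|<|x|/2\},\qquad C=\mathbb{R}^n\setminus(A\cup B).
\]
\begin{itemize}
\item On $A$, write $y=x+z$ and symmetrize in $z$ over the ball $|z|<|x|/2$, so that the first-order Taylor term cancels in the principal value. Every point on the segment between $x$ and $x\pm z$ has modulus at least $|x|/2\ge2>1$, so the hypothesis $|\partial_x^2\phi|\lesssim\varphi^{-q-2}$ applies there and gives the bound $(1+|x|)^{-q-2}|z|^2$ on the second difference. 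Integrating gives
\[
\int_{|z|<|x|/2}(1+|x|)^{-q-2}|z|^{2-n-2s}\,dz\lesssim(1+|x|)^{-q-2s}.
\]
\item On $B$, we have $|x-y|\ge|x|/2$, and we bound $|\phi(x)-\phi(y)|\le|\phi(x)|+|\phi(y)|$. This yields
\[
|x|^{-n-2s}\bigl(|\phi(x)|\,|x|^n+\|\phi\|_{L^1}\bigr)\lesssim(1+|x|)^{-n-2s},
\]
using $|\phi(x)|\lesssim(1+|x|)^{-q}$ with $q>n$ and $\phi\in L^1$.
\item On $C$, both $|y|\ge|x|/2$ and $|y-x|\ge|x|/2$ hold. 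Therefore $|\phi(x)|+|\phi(y)|\lesssim(1+|x|)^{-q}$, and since $\int_{|x-y|\ge|x|/2}|x-y|^{-n-2s}\,dy\sim|x|^{-2s}$, this piece contributes $(1+|x|)^{-q-2s}$.
\end{itemize}
Because $q>n$, all three contributions are $\lesssim(1+|x|)^{-n-2s}\sim\varphi(x)^{-n-2s}$, which proves \eqref{10}.

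The main obstacle is the near-diagonal piece $A$. The kernel there is not integrable against the first difference, so one must exploit the second-order cancellation from the symmetric principal value. One must also make sure that the Taylor expansion only evaluates $\partial_x^2\phi$ at points with modulus greater than $1$, where the decay hypothesis holds; restricting to $|x|\ge4$ and $|z|<|x|/2$ guarantees this. The slowest-decaying term is the one from $B$, and it is this term that dictates the exponent $-n-2s$ rather than $-q-2s$.
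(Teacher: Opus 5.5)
Your proof is correct and follows the same route as the paper: the bounded region is handled by the same symmetric second-difference estimate, using $\|\partial_x^2\phi\|_{L^\infty}|z|^2$ for $|z|\le 1$ and $\|\phi\|_{L^\infty}$ for $|z|>1$. For large $|x|$ the paper only points to Lemma 2.3 of the cited Dao--Reissig work, and your three-region split writes that standard argument out in full: second-order Taylor with $|\partial_x^2\phi|\lesssim\varphi^{-q-2}$ near the diagonal, the bound $|x|^{-n-2s}\|\phi\|_{L^1}$ near the origin (which is where $q>n$ produces the rate $-n-2s$), and crude decay on the rest.
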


\begin{proof}
Since $\phi,\,\partial_x^2\phi \in L^\infty$, we can remove the principal value of the integral at the origin and conclude that
$$(-\Delta)^s\phi(x)=-\frac{C_{n,s}}{2} \int_{\mathbb{R}^n}\frac{\phi(x+y)+\phi(x-y)-2\phi(x)}{|y|^{n+ 2s}}\,dy.$$
To establish the desired estimate, we divide the analysis into two cases: $|x|\leq 2$ with $x\neq 1$, and $|x|\geq 2$. The former case can be treated via an explicit computation as follows: 
\begin{align}
|(-\Delta)^s\phi(x)| &\lesssim \int_{|y|\leq1}\frac{|\phi(x+y) + \phi(x-y) - 2\phi(x)|}{|y|^{n+2s}}\,dy \nonumber\\
&\quad + \int_{|y|\geq1}\frac{|\phi(x+y) + \phi(x-y) - 2\phi(x)|}{|y|^{n+2s}}\,dy \nonumber\\
&\lesssim 1. \label{case1eq1}
\end{align}
Meanwhile, the latter case is followed by adapting some arguments used in the proof of Lemma 2.3 in \cite{Anhblowup} with minor modifications, so that we may omit its details for brevity.
\end{proof}

\begin{proof}[Proof of Lemma \ref{lem_Laplace_fractional}]
\noindent The first case $m=0$ was proved in Lemma \ref{lem_Laplace_(0,1)}. Let us turn to the second case $m\geq 1$. Namely, Lemma \ref{lem_Laplace_integer} implies that
\[ |(-\Delta)^{m}\varphi(x)^{-q}|
\begin{cases}
    \lesssim 1 &\text{ for }|x|\leq 2,\,|x|\neq 1,\\
    \sim |\partial^{2m}_x\varphi(x)^{-q}|\sim\varphi(x)^{-q-2m} &\text{ for }|x|\geq 2.
\end{cases}
\]
By choosing $\phi(x)=(-\Delta)^{m}\varphi(x)^{-q}$, then $\phi(x)$ satisfies 
\[|\phi(x)|
\begin{cases}
    \lesssim 1&\text{ for }|x|\leq 2,\,|x|\neq 1,\\
    \sim \varphi(x)^{-q-2m}&\text{ for }|x|\geq 2,\\
\end{cases}
\]
and
$$ |\partial^2_x\phi(x)|\sim \left|\partial^{2m+2}_x\varphi(x)^{-q}\right|\lesssim \varphi(x)^{-q-2m-2} \text{ for } x\in \mathbb{R}^n \text{ and }|x|\neq 1, $$
where Lemma \ref{lem_Laplace_integer} is used again in the last estimate. Applying Lemma \ref{lem_Laplace_(0,1)} we get
\[
|(-\Delta)^s\phi(x)|\lesssim \varphi(x)^{-n-2s}.
\]
for $x\in \mathbb{R}^n$ and $|x|\neq 1$. Due to the fact that
$$ (-\Delta)^{s}\phi(x)=(-\Delta)^{s}\big((-\Delta)^{m}\varphi(x)^{-q}\big)=(-\Delta)^{\gamma}\varphi(x)^{-q}, $$
we complete the proof of Lemma \ref{lem_Laplace_fractional}.
\end{proof}

\begin{lemma}[see \cite{Anhblowup}]\label{lem_scaling} 
Let $s\in (0,1)$ and $k>0$. Let $\varphi$ be a smooth function satisfying $\partial^2_x\varphi\in L^{\infty}(\mathbb{R}^n)$. For any $R>0$, let $\varphi_R$ be a function defined by
\[
\varphi_R(x):=\varphi\big(R^{-k}x\big)
\]
for all $x\in \mathbb{R}^n$. Then, $(-\Delta)^s(\varphi_R)$ satisfies the following scaling properties for all $x\in \mathbb{R}^n$:
\[
(-\Delta)^s(\varphi_R)(x)=R^{-2sk}\big((-\Delta)^{s}\varphi\big)\big(R^{-k}x\big).
\]
\end{lemma}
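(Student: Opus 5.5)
The plan is to work directly from the second-difference singular integral representation of $(-\Delta)^s$, exactly as in the proof of Lemma \ref{lem_Laplace_(0,1)}, and perform a linear change of variables in the integration variable. Since $\varphi$ is smooth with $\partial^2_x\varphi\in L^\infty(\mathbb{R}^n)$, the same holds for $\varphi_R$, with $\|\partial_x^2\varphi_R\|_{L^\infty}= R^{-2k}\|\partial_x^2\varphi\|_{L^\infty}$. Hence the symmetric second difference satisfies
\[
|\varphi_R(x+y)+\varphi_R(x-y)-2\varphi_R(x)|\lesssim \min\{|y|^2,\ \cdot\ \}
\]
near $y=0$, so no principal value is needed there. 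For the behaviour at infinity I would use the growth hypothesis implicit in the setting: $\varphi$ has at most linear growth, for instance when $\varphi$ is the function in \eqref{varphi_defined}. Since $2s>0$, this suffices for $s<1/2$. For general $s\in(0,1)$ I would either assume $\varphi$ bounded or note that the integral over $|y|\ge1$ converges absolutely whenever the second difference is $\mathcal{O}(|y|^{2s-\epsilon})$. In either case,
\[
(-\Delta)^s\varphi_R(x)=-\frac{C_{n,s}}{2}\int_{\mathbb{R}^n}\frac{\varphi(R^{-k}x+R^{-k}y)+\varphi(R^{-k}x-R^{-k}y)-2\varphi(R^{-k}x)}{|y|^{n+2s}}\,dy .
\]

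The key step is the substitution $y=R^{k}z$, so that $dy=R^{kn}\,dz$ and $|y|^{n+2s}=R^{k(n+2s)}|z|^{n+2s}$. The numerator becomes $\varphi(R^{-k}x+z)+\varphi(R^{-k}x-z)-2\varphi(R^{-k}x)$, and the Jacobian factor $R^{kn}$ cancels against $R^{kn}$ in the denominator, leaving exactly $R^{-2sk}$. The remaining integral is precisely the second-difference representation of $(-\Delta)^s\varphi$ evaluated at the point $R^{-k}x$. This gives
\[
(-\Delta)^s(\varphi_R)(x)=R^{-2sk}\big((-\Delta)^s\varphi\big)(R^{-k}x).
\]
As an independent check, the identity can also be read off on the Fourier side: $\widehat{\varphi_R}(\xi)=R^{kn}\hat\varphi(R^k\xi)$, and multiplying by $|\xi|^{2s}=R^{-2sk}|R^k\xi|^{2s}$ gives the same scaling. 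That argument, however, requires $\varphi$ to be a tempered distribution for which $|\xi|^{2s}\hat\varphi$ makes sense, which is why I prefer the real-variable route.

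The only genuine obstacle is justifying that the integral representation is valid and absolutely convergent for the class of $\varphi$ considered, since the scaling computation itself is purely algebraic. Convergence at the origin comes from the bounded second derivatives via a second-order Taylor expansion. At infinity I would split the integral into $|y|\le1$ and $|y|\ge1$ and bound the tail using the size of $\varphi$. Because the change of variables is a bijection of $\mathbb{R}^n$ mapping $\{|y|\le1\}$ onto $\{|z|\le R^{-k}\}$, absolute convergence is preserved and no principal-value limit has to be interchanged with the substitution. This justifies the computation for every $x\in\mathbb{R}^n$.
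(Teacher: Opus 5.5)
Your argument is correct and is essentially the standard one. The paper gives no proof of this lemma and only cites \cite{Anhblowup}; the argument there is the same substitution $y=R^{k}z$ in the second-difference integral that you carry out, producing the factor $R^{kn}/R^{k(n+2s)}=R^{-2sk}$.

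One remark in your convergence discussion has the direction reversed. If $\varphi$ grows at most linearly, the second difference is $\mathcal{O}(|y|)$ for large $|y|$. The tail $\int_{|y|\ge 1}|y|^{1-n-2s}\,dy\sim\int_1^\infty r^{-2s}\,dr$ then converges only for $s>1/2$, not for $s<1/2$. In particular, for the function in \eqref{varphi_defined} itself, which behaves like $|x|$, the integral defining $(-\Delta)^s\varphi$ diverges when $s\le 1/2$.

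Boundedness is what gives convergence for every $s\in(0,1)$, since the tail is then $\int_1^\infty r^{-1-2s}\,dr$. That is the case the paper actually needs: the lemma is applied to bounded functions such as $\varphi^{-q}$ and the $\phi$ of Lemma \ref{lem_Laplace_(0,1)}.

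The cleanest way to close the convergence question is the observation you end with. Because $y\mapsto R^{-k}y$ is a bijection of $\mathbb{R}^n$, the integral for $(-\Delta)^s\varphi_R(x)$ converges absolutely if and only if the one for $(-\Delta)^s\varphi$ at $R^{-k}x$ does. Hence the identity holds whenever either side is defined, and you need not guess an extra growth assumption.
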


\end{document}